\newcommand{\rem}[1]{}
\newcommand\curl{\mathord{\operatorname{curl}\,}}
\newenvironment{proof}[1][Proof]{\noindent\textbf{#1.} }{\ \rule{0.5em}{0.5em}}
\def\XXint#1#2#3{{\setbox0=\hbox{$#1{#2#3}{\int}$ }
\vcenter{\hbox{$#2#3$ }}\kern-.5\wd0}}
\definecolor{bgd}{RGB}{153,0,51}      
\begin{document}

\newtheorem{theorem}{Theorem}[section]
\newtheorem{definition}[theorem]{Definition}
\newtheorem{lemma}[theorem]{Lemma}
\newtheorem{remark}[theorem]{Remark}
\newtheorem{proposition}[theorem]{Proposition}
\newtheorem{corollary}[theorem]{Corollary}
\newtheorem{example}[theorem]{Example}



\title{Infinite-dimensional Lagrange--Dirac systems with\\ boundary energy flow I: Foundations}

\fancyhead[c]{Infinite-dimensional Lagrange--Dirac systems I}
\fancyfoot[c]{\thepage}

\author{Fran\c{c}ois Gay--Balmaz$^{1}$, \'{A}lvaro Rodr\'{i}guez Abella$^{2}$ and Hiroaki Yoshimura$^{3}$}

\addtocounter{footnote}{1}
\footnotetext{Division of Mathematical Sciences, Nanyang Technological University, 21 Nanyang Link, Singapore 637371.
\texttt{francois.gb@ntu.edu.sg}
\addtocounter{footnote}{1}}

\footnotetext{Department of Applied Mathematics, ICAI School of Engineering, Comillas Pontifical University, Madrid, Spain.
\texttt{arabella@comillas.edu}
\addtocounter{footnote}{1}}

\footnotetext{School of Science and Engineering, Waseda University. 3--4--1, Okubo, Shinjuku, Tokyo 169-8555, Japan.
\texttt{yoshimura@waseda.jp}
\addtocounter{footnote}{1}}

\date{ }
\maketitle
\makeatother



\begin{abstract}
A new geometric approach to systems with boundary energy flow is developed using infinite-dimensional Dirac structures within the Lagrangian formalism. This framework satisfies a list of consistency criteria with the geometric setting of finite-dimensional mechanics. In particular, the infinite-dimensional Dirac structure can be constructed from the canonical symplectic form on the system's phase space; the system’s evolution equations can be derived equivalently from either a variational
perspective or a Dirac structure perspective; the variational principle employed is a direct extension of Hamilton’s principle in classical mechanics; and the approach allows for a process of system interconnection within its formulation. This is achieved by developing an appropriate infinite dimensional version of the previously developed Lagrange--Dirac dynamical systems.
A key step in this construction is the careful choice of a suitable dual space to the configuration space—specifically, a subspace of the topological dual that captures the system's behavior in both the interior and the boundary, while allowing for a natural extension of the canonical geometric structures of mechanics.
This paper focuses on systems where the configuration space consists of differential forms on a smooth manifold with a boundary. To illustrate our theory, several examples, including nonlinear wave equations, the telegraph equation, and the Maxwell equations are presented.
\end{abstract}


\maketitle

\section{Introduction}

Variational and geometric structures underlying infinite-dimensional dynamical systems play a crucial role in the modeling and structure-preserving discretization of these systems. This becomes particularly important when such systems interact with their surroundings through their boundaries. A predominant approach to addressing this type of problem has been developed from the Hamiltonian perspective, specifically within the port-Hamiltonian framework \cite{VdSMa2002,MaVdS2005,RaCaVdSSt2020}. The aim of the present paper is to establish a geometric framework for systems with boundary energy flow that meets the following criteria:
\begin{itemize}
\item[\rm (i)] The system's evolution equations can be derived equivalently from either a variational perspective or a Dirac structure perspective.
\item[\rm (ii)] The infinite-dimensional Dirac structures involved directly extend the canonical symplectic structure 
$ dq \wedge dp$ of classical mechanics.
\item[\rm (iii)] The variational principle employed is a direct extension of Hamilton's principle $\delta \!\int\! L(q, \dot q) dt$ $=0$ from classical mechanics.
\item[\rm (iv)] The geometric setting does not impose a specific form or regularity on the involved Lagrangian density.
\item[\rm (v)] The approach allows for the systematic development of the process of system interconnection within its formulation.
\item[\rm (vi)] In presence of a Lie group symmetry, the approach allows for a reduction process within its geometric framework.
\end{itemize}

To develop an approach satisfying (i)--(vi), we will extend the Lagrange--Dirac dynamical system from \cite{YoMa2006a} to the infinite-dimensional setting. This framework is based on the use of Dirac structures \cite{Co1990} in conjunction with the variational Lagrangian formalism of mechanics. A key advantage of employing the Lagrange--Dirac approach to dynamical systems is that it provides a unified geometric formulation for systems that can be degenerate and nonholonomic, while also admitting an associated variational formulation \cite{YoMa2006b}. Additionally, this approach allows for an interconnection process from both the variational and Dirac perspectives \cite{JaYo2014}. Reduction by symmetries within the Lagrange--Dirac setting was developed in \cite{YoMa2007,YoMa2009,GBYo2015}. Some extension to classical field theories was made in \cite{VaYoLe2012} by proposing the notion of multi-Dirac structures, leading to the Lagrange--Dirac field equations. More recently the Lagrange--Dirac setting was extended to thermodynamic systems \cite{GBYo2018,GBYo2020}. An infinite-dimensional version of the Lagrange--Dirac dynamical systems was first proposed by \cite{GBYo2015} in the context of Lie--Dirac reduction over semi-direct products, applied to the  incompressible ideal fluids and compressible magnetohydrodynamics, as well as to second-order Rivlin--Ericksen fluids seen as an infinite-dimensional nonholonomic system.
However, this approach did not address the treatment of boundary energy flow

\medskip

The aim of this paper is to develop the foundations for a geometric and variational framework for infinite-dimensional systems with boundary energy flow that satisfy conditions (i)--(vi). While the theory will ultimately be developed for a broad class of infinite-dimensional configuration manifolds of maps and Lagrangian functions, this paper focuses on the foundational aspects for the following specific situation.

(i) \textit{Configuration manifolds.}
We restrict to infinite-dimensional vector spaces and focus on the space $V=C^\infty(\mathcal B)$ of smooth functions on a bounded domain with smooth boundary, and, more generally, on the space of smooth $k$-forms on a smooth manifold with boundary, $V=\Omega^k(M)$.

(ii) \textit{Lagrangian functions.}
We consider Lagrangians $L:TV\to\mathbb R$, where $TV=V \times V$, defined through a density, that is, for the case $V=C^\infty(\mathcal B)$, we have
\begin{equation*}
L(\varphi,\nu)=\int_{\mathcal B}\mathfrak L(\varphi(x),\nu(x),\nabla\varphi(x))\,dx,\qquad(\varphi,\nu)\in TC^\infty(\mathcal B)=C^\infty(\mathcal B)\times C^\infty(\mathcal B),
\end{equation*}
where $\mathfrak L:\mathbb R\times\mathbb R\times\mathbb R^m\to\mathbb R$; and for the case $V=\Omega^k(M)$, we have
\begin{equation*}
L(\varphi,\nu)=\int_M\mathscr L(\varphi,\nu,{\rm d}\varphi),\qquad(\varphi,\nu)\in T\Omega^k(M)=\Omega^k(M)\times\Omega^k(M),
\end{equation*}
where $\mathscr L:\textstyle\bigwedge^k T^* M\times_M\bigwedge^kT^* M\times_M\bigwedge^{k+1} T^* M\to\bigwedge^m T^* M$. For these classes of Lagrangians, their partial derivatives are seen to lie in a vector subspace of the topological dual space, referred to as the \emph{restricted dual}, which maintains a nondegenerate correspondence with the original space.

\medskip
In this paper, we will concentrate on the properties (i)--(iv) above, while (v) and (vi) will be the subject of future works. Specifically, we will demonstrate that property (i) holds precisely as it does in the finite-dimensional Lagrange--Dirac framework, now extended to accommodate boundary energy flow. Regarding (ii) we will highlight how our approach consistently extends the geometric structures of finite-dimensional mechanics, by showing that it is based on an infinite-dimensional version of the canonical symplectic form, from which a Dirac structure can be associated following the classical definition. The key step for this is the consideration of an appropriate dual space to the configuration space, which incorporates functions or differential forms on the boundary. This subspace of the topological dual effectively accounts for both the boundary effects in the system and the boundary terms in the derivative of the Lagrangian. Regarding (iii) we will show that the solutions of the resulting Lagrange--Dirac system can be characterized as the critical curves of a variational formulation that consistently extends the Hamilton principle of classical mechanics, namely, the Lagrange--d'Alembert--Pontryagin principle which incorporates the configuration variable as well as the velocity and momentum variables in its formulation. Property (iv) will be illustrated using the example of the nonlinear wave equation, demonstrating that while our approach can inherently handle arbitrary Lagrangians, achieving such an extension would be more challenging with previous methods.

\medskip

The analogies between the finite and infinite-dimensional setting are illustrated in Table \ref{comparison}.

\medskip

In part II of this paper \cite{GBRAYo2025II}, we extend these results to systems described by bundle-valued $k$-forms, with application to gauge and particle field theories. For future work, we plan to consider systems on infinite-dimensional manifolds of maps. Such a theory would encompass fluid dynamics in a fixed domain, where the configuration manifold is the group of diffeomorphisms of $\mathcal B$, $\operatorname{Diff}(\mathcal B)$, as well as continuum mechanics with moving boundary, where the configuration manifold is the manifold of embeddings of $\mathcal B$ on $\mathbb R^m$, $\operatorname{Emb}(\mathcal B,\mathbb R^m)$. Additionally, reduction by symmetries could be achieved using the relabeling symmetry and material frame indifference, yielding the so-called spatial and convective representations, \cite{GaMaRa2012}.
Finally, we aim to develop the interconnection of systems within this geometric framework, see \cite{GBYo2024} for preliminary results.

\begin{table}[ht!]
\centering
\begin{NiceTabular}{p{7.5cm} | p{7.5cm}}
\toprule
\textbf{Finite dimensional} & \textbf{Infinite-dimensional}\\
\midrule\midrule
\multicolumn{2}{c}{\textsf{Configuration manifold \& velocity and momentum phase spaces}}\\
\midrule 
$\begin{array}{l}
Q \;\;\text{manifold} \\
TQ\ni(q,v) \\
T^*Q \ni(q,p)
\end{array}$
&
$\begin{array}{l}
V= C^\infty( \mathcal{B})\;\; \text{Fr\'echet space}\\
V^\star = C^\infty( \mathcal{B} ) \times C^\infty( \partial \mathcal{B})\;\; \text{restricted dual}\\
TV=V \times V\ni( \varphi, \nu )\\
T^\star V= V \times V^\star\ni( \varphi , \alpha , \alpha _ \partial )
\end{array}$
\\
\midrule
\multicolumn{2}{c}{\textsf{Canonical 1-form \& canonical symplectic form}}\\
\midrule
$\Theta (q,p) \cdot ( \delta q, \delta p)= \left\langle p, \delta q \right\rangle$ &
$\begin{array}{l}
\Theta ( \varphi , \alpha , \alpha _ \partial ) \cdot ( \delta \varphi , \delta \alpha , \delta \alpha _ \partial ) = \left\langle (\alpha , \alpha _ \partial ), \delta \varphi \right\rangle \\
\quad=\displaystyle \vspace{0.2cm}\int_ \mathcal{B} \alpha \delta \varphi\,dx + \int_{ \partial \mathcal{B} } \alpha _ \partial \delta \varphi\,ds
\end{array}$
\\[0.25ex]
$ \Omega = - {\rm d} \Theta $ & $ \Omega = - {\rm d} \Theta $ \\
\midrule
\multicolumn{2}{c}{\textsf{Dirac structure}}\\
\midrule
$D= \operatorname{graph} \Omega ^\flat $
&
$D= \operatorname{graph} \Omega ^\flat$
\\
$\subset T(T^*Q) \oplus T^*(T^*Q)$ & $\subset T(T^\star V) \oplus T^\star(T^\star V) $
\\ 
\midrule
\multicolumn{2}{c}{\textsf{Lagrangian}}\\
\midrule
$L:TQ \rightarrow \mathbb{R}$ 
& $L:TV \rightarrow \mathbb{R}$\\
& $\displaystyle\vspace{0.2cm} L( \varphi , \nu )= \int_ \mathcal{B} \mathfrak{L}( \varphi , \nu , \nabla \varphi )\,dx$
\\
\midrule
\multicolumn{2}{c}{\textsf{Force}}\\
\midrule
$F:TQ \rightarrow T^*Q$ 
& $F:TV \rightarrow T^\star V$ \\
&$\displaystyle\vspace{0.2cm} F( \varphi , \nu )= ( \varphi , \mathcal{F} ( \varphi , \nu ), \mathcal{F} _ \partial ( \varphi, \nu ))$
\\ 
\midrule
\multicolumn{2}{c}{\textsf{Lagrange--Dirac system}}\\ 
\midrule
$
\begin{array}{l}
\big((q,p, \dot q,\dot p), {\mathrm d}_D L(q,v)-\widetilde{F}(q,v)\big)\\
\quad\in D_{T^\star V} (q,p)
\end{array}$
&
$
\!\!\!\!\begin{array}{l}
\big((\varphi,\alpha,\alpha_\partial, \dot\varphi,\dot\alpha,\dot\alpha_\partial), {\mathrm d}_D L(\varphi,\nu)-\widetilde{F}(\varphi,\nu)\big)\\
\quad\in D_{T^\star V} (\varphi,\alpha,\alpha_\partial)
\end{array}\!\!\!\!$
\\
\vspace{-0.2cm} & \vspace{-0.2cm}\\
for $(q(t),v(t),p(t)) \in TQ \oplus T^*Q$ & for $(\varphi (t), \nu (t), \alpha (t), \alpha _ \partial (t)) \in TV \oplus T^\star V$
\\ 
\midrule
\multicolumn{2}{c}{\textsf{Variational principle}}\\
\midrule
$\begin{array}{c}
\displaystyle\vspace{0cm}\delta\int_{t_{0}}^{t_{1}}L(\varphi,v)+ \left< p, \dot q-v\right>dt\\
\displaystyle+\int_{t_{0}}^{t_{1}} \left<F(q,\dot{q}),\delta{q}\right> dt=0
\end{array}$ 
& $\begin{array}{c}
\displaystyle\vspace{0cm}\delta\int_{t_{0}}^{t_{1}}L(\varphi,\nu)+ \left< (\alpha, \alpha_{\partial}), \dot\varphi-\nu\right>dt\\
\displaystyle+\int_{t_{0}}^{t_{1}} \left<F(\varphi,\dot{\varphi}),\delta{\varphi}\right> dt=0
\end{array}$\\
\vspace{-0.2cm} & \vspace{-0.2cm}\\
for $(q(t),v(t),p(t)) \in TQ \oplus T^*Q$ & for $(\varphi (t), \nu (t), \alpha (t), \alpha _ \partial (t)) \in TV \oplus T^\star V$\\
\bottomrule
\end{NiceTabular}
\caption{Schematic correspondence between the finite and infinite-dimensional Lagrange--Dirac settings.}
\label{comparison}
\end{table}

\paragraph{Plan of the paper}
In \S \ref{sec:prelim}, we briefly review the theory of finite-dimensional Lagrange--Dirac dynamical systems and recall some facts on Fréchet spaces and their duals. The main ideas of the paper are introduced in \S \ref{sec:functions}, where the theory of infinite-dimensional Lagrange--Dirac dynamical systems with boundary energy flow on the space of smooth functions on a bounded domain with smooth boundary is presented. In particular, the restricted dual is defined, which leads to the restricted iterated bundles and the restricted Tulczyjew triple. Furthermore, we show that there is a variational principle associated with the infinite-dimensional Lagrange--Dirac dynamical system, given by a Lagrange--d'Alembert--Pontryagin variational principle, thereby  consistently extending the situation of finite-dimensional mechanics reviewed in \S\ref{sec:prelim}, see also \cite{RAGBYo2023}. Examples are then provided to illustrate our theory for Lagrange–Dirac systems with body and boundary forces, including a vibrating membrane, nonlinear wave equations, and an ideal one-dimensional transmission line. In \S \ref{sec:kforms} an extension is made to the case of Lagrange--Dirac dynamical systems  with boundary energy flow on the space of $k$-forms on a smooth manifold with boundary.
Some remarks are also made to compare our proposal with the Stokes--Dirac structures approach introduced in \cite{VdSMa2002}. Then, the example of electromagnetism illustrates the theory of Lagrange--Dirac systems on the space of $k$-forms.

\section{Preliminaries}\label{sec:prelim}

Here we recall the essential notions about Dirac structures and Lagrange--Dirac mechanical systems in the finite dimensional setting. An extended development of the definitions and results introduced here can be found in \cite{Co1990,YoMa2006a,YoMa2006b}. We also recall some elementary notions about Fréchet spaces and dual systems, as can be found in \cite{Ru1991,MeMeVoRa1997}.
\medskip

\subsection{Dirac structures on manifolds}\label{sec:diracprelim}

Let $M$ be a finite dimensional smooth manifold. A fibered interior product may be defined canonically on its Pontryagin bundle $TM \oplus T^*M \rightarrow M$ as follows, 
\begin{equation*}
\left\langle \!\left\langle \cdot,\cdot \right\rangle \!\right\rangle :(TM\oplus T^*M)\times_M(TM\oplus T^*M)\to\mathbb R,\qquad\left\langle \!\left\langle(v_1,\alpha_1),(v_2,\alpha_2)\right\rangle \!\right\rangle=\alpha_1(v_2)+\alpha_2(v_1).
\end{equation*}
Given a vector subbundle $D_M\subset TM\oplus T^*M$, we denote by $D_M^\perp\subset TM\oplus T^*M$ the orthogonal of $D_M$ relative to this interior product. Observe that, for each $x\in M$, it is defined by
\begin{equation*}
D_M^\perp(x)=\left\{(v_1,\alpha_1)\in T_xM\times T_x^*M\mid\left\langle \!\left\langle(v_1,\alpha_1),(v_2,\alpha_2)\right\rangle \!\right\rangle=0,~\forall(v_2,\alpha_2)\in D_M(x)\right\}.
\end{equation*}

\begin{definition}
A \emph{Dirac structure}\footnote{The literature on Dirac geometry refers to them as \textit{almost Dirac structures}, and the term \textit{Dirac structure} is only used when an integrability condition is satisfied, see \cite{Co1990}. Since we shall not deal with the integrability condition in this paper, we shall call them \textit{Dirac structures}, following previous terminology in the Lagrange--Dirac and Hamilton--Dirac mechanics literature, see \cite{YoMa2006a, GBYo2015}.} on $M$ is a maximally isotropic vector subbundle $D_M\subset TM\oplus T^*M$ with respect to $\left\langle \!\left\langle\cdot,\cdot\right\rangle \!\right\rangle$, i.e., $D_M^\perp=D_M$.
\end{definition}

Note that a vector subbundle $D_M\subset TM\oplus T^*M$ is a Dirac structure if and only if $\dim D_M=\dim M$ and it satisfies
\begin{equation*}
\alpha_1(v_2)+\alpha_2(v_1)=0,\qquad(v_1,\alpha_1),(v_2,\alpha_2)\in D_M(x),~x\in M.
\end{equation*}
Given a regular distribution $\Delta_M\subset TM$ on $M$ and a two-form $\Omega_M\in\Omega^2(M)$ on $M$, a Dirac structure on $M$ is defined by, for each $x \in M$,
\begin{equation*}
D_M(x)=\left\{(v_x,\alpha_x)\in T_xM\times T_x^*M\mid v_x\in\Delta_M(x),~\alpha_x-\Omega_M^\flat(x)(v_x)\in\Delta_M^\circ(x)\right\},
\end{equation*}
where $\Delta_M^\circ\subset T^*M$ denotes the annihilator of $ \Delta _M$ and $\Omega_M^\flat: TM\to T^*M$ the flat map of $ \Omega _M$, which is given, for each $x\in M$, by $\Omega_M^\flat(x)(v_x)(w_x)=\Omega_M(x)(v_x,w_x)$ for any $v_x,w_x\in T_x M$.

\subsection{Finite-dimensional Lagrange--Dirac systems in mechanics}\label{sec:mechanicsprelim}

Let $Q$ be a finite-dimensional smooth manifold, playing the role of the \textit{configuration manifold in mechanics}. Let $\Omega_{T^*Q}\in\Omega^2(T^*Q)$ be the canonical symplectic two-form and $\Delta_Q\subset TQ$ be a regular distribution, playing the role of \emph{kinematic constraints}. We consider the lifted distribution, $\Delta_{T^*Q}=(T\pi_{Q})^{-1}(\Delta_Q)\subset T(T^*Q)$, where $\pi_Q: T^*Q \to Q$ is the canonical projection and $T\pi_Q: T\left(T^*Q\right) \to TQ$ is the tangent map of $\pi_Q$. The Dirac structure on $T^*Q$ that is induced from $\Delta_Q$ is given by, for each $p_q \in T^\ast Q$,
\begin{equation*}
\begin{split}
D_{\Delta_Q}(p_q)&=\left\{(v_{p_q},\alpha_{p_q})\in T_{p_q}(T^*Q)\times T^*_{p_q}(T^*Q)\mid v_{p_q}\in\Delta_{T^*Q}(p_q),\right. 
\\ &\left.\hspace{5cm}
\alpha_{p_q}-\Omega_{T^*Q}^\flat(p_q)(v_{p_q})\in\Delta_{T^*Q}^\circ(p_q) 
\right\}.
\end{split}
\end{equation*}
When $\Delta_Q=TQ$, it is known as the \emph{canonical Dirac structure} on $T^*Q$. 

Recall that there exists a structure of canonical isomorphisms between three iterated bundles over $Q$ that is called the {\it Tulczyjew triple}, and it is illustrated by the following diagram:
\begin{equation*}
	\begin{tikzpicture}
			\matrix (m) [matrix of math nodes,row sep=0.1em,column sep=7em,minimum width=2em]
			{	T^*(TQ) & T(T^*Q) & T^*(T^*Q)\\
			    (q,\delta q,\delta p,p) & (q,p,\delta q,\delta p) & (q,p,-\delta p,\delta q).\\};
			\path[-stealth]
			(m-1-1) edge [bend left = 25] node [above] {$ \gamma_{Q}$} (m-1-3)
			(m-1-2) edge [] node [above] {$\kappa_{Q}$} (m-1-1)
			(m-1-2) edge [] node [above] {$\Omega_{T^*Q}^\flat$} (m-1-3)
			(m-2-2) edge [|->] node [] {} (m-2-3)
			(m-2-2) edge [|->] node [] {} (m-2-1);
	\end{tikzpicture}
\end{equation*}

Let $L:TQ\to\mathbb R$ be a (possibly degenerate) Lagrangian. We denote by ${\mathrm d}_DL$ the \emph{Dirac differential} of $L$, which is a map given in coordinates by, for $(q,v) \in TQ$,
\begin{equation*}
{\mathrm d}_DL= \gamma_{Q}\circ{\rm d}L:TQ\to T^*(T^*Q),\quad (q,v) \mapsto \left( q, \frac{\partial L}{\partial v}, - \frac{\partial L}{\partial q}, v \right).
\end{equation*}

Consider an external force given by a fiber preserving map $F:TQ \rightarrow T^*Q$. The associated Lagrangian force field is the map $\widetilde{F}: TQ \rightarrow T^*(T^*Q)$ defined by, 
\begin{equation}\label{def_tilde_F} 
\left\langle \widetilde{F}(q,v), W \right\rangle = \left\langle F( q,v), T_{ \mathbb{F} L(q,v)} \pi _Q(W) \right\rangle, 
\end{equation} 
for $(q,v) \in TQ$ and $W \in T_{ \mathbb{F} L(q,v)}(T^*Q)$. Here $\mathbb{F} L:TQ \rightarrow T^*Q$ is the fiber derivative of $L$, locally given as $ \mathbb{F} L(q,v)=\left(q, \frac{\partial L}{\partial v}(q,v)\right)$. In coordinates the Lagrangian force field reads
\[
\widetilde{F}(q,v) = \left( q, \frac{\partial L}{\partial v}(q,v), F(q,v),0 \right).
\]

\begin{definition}\label{def:LDfinite}\rm
Given a Lagrangian $L:T Q \rightarrow \mathbb{R} $, a constraint $\Delta _Q$ on $Q$, and an external force $F:TQ \rightarrow T^*Q$,
the associated \emph{forced Lagrange--Dirac dynamical system} is given by $({\mathrm d}_DL, \widetilde{F}, D_{\Delta_Q})$ that satisfies the condition, for  each $(q,v,p) \in TQ\oplus T^*Q$,
\begin{equation}\label{DiracSys_Mech}
\left((q,p,\dot{q},\dot{p}),{\mathrm d}_DL(q,v)- \widetilde{F}(q,v)\right)\in D_{\Delta_Q}(q,p).
\end{equation}
\end{definition}

Observe that when $\Delta_Q$ is completely integrable, we have a \emph{holonomic} system. Otherwise, the system is \emph{nonholonomic}. The Lagrange--Dirac systems can be also obtained from the following variational structure.
\begin{definition}\label{LDAPPrin_Mechanics}\rm
The \emph{Lagrange--d'Alembert--Pontryagin principle} for curves $(q,v,p):[t_0,t_1]\to TQ\oplus T^*Q$ on the Pontryagin bundle is defined by the condition
\begin{equation}\label{LdAP}
\delta\int_{t_0}^{t_1}\left(L(q,v)+\left<p, \dot q-v \right>\right)dt  + \int_{t_0}^{t_1} \left\langle F(q, \dot  q), \delta q \right\rangle dt=0,
\end{equation}
with respect to variations $\delta q$, $ \delta v$, $ \delta  p$ such that $ \delta q\in\Delta_Q(q)$ and $\delta q(t_0)=\delta q(t_1)=0$, together with the condition $\dot  q \in \Delta _Q(q)$.
\end{definition}

The equivalence between the variational formulation \eqref{LdAP} and the forced Lagrange--Dirac system \eqref{DiracSys_Mech}, for curves $(q(t),v(t),p(t)) \in TQ \oplus T^*Q$ in the Pontryagin bundle is summarized in (iii)-(v) of the following theorem. One can also equivalently write these conditions on the curve $q(t) \in Q$ only, from which the curves $v(t)$ and $p(t)$ can be reconstructed. This is stated in (i)-(ii) of the following theorem, see \cite{YoMa2006a}.

\begin{theorem}\label{Theorem:LDfinitedim}\rm
Consider the curves $(q,v,p):[t_0,t_1]\to TQ\oplus T^* Q$ and $q=\rho_Q\circ(q,v,p): [t_{0},t_{1}] \to Q$, where $\rho_Q:TQ\oplus T^*Q\to Q$ is the natural projection. The following statements are equivalent:
\begin{itemize}
\item[(i)] The curve $q:[t_0,t_1]\to Q$ satisfies the critical condition for the action functional,
\[
\delta\int_{t_0}^{t_1}L(q(t),\dot q(t))\,dt   + \int_{t_0}^{t_1} \left\langle F(q(t), \dot  q(t)), \delta q(t) \right\rangle dt=0,
\]
where $\delta q(t),\dot q(t) \in \Delta_{Q}(q(t))$ for each $t\in[t_0,t_1]$.
\vskip 3pt
\item[(ii)] The curve $q:[t_0,t_1]\to Q$ is a solution of the \emph{Lagrange--d'Alembert equations},
\[
\frac{d}{dt}\frac{\partial L}{\partial \dot{q}}(q,\dot q)- \frac{\partial L}{\partial q}(q,\dot q)  -F(q, \dot  q)\in \Delta_{Q}^\circ(q).
\]
\vskip 3pt
\item[(iii)]
The curve $(q,v,p):[t_0,t_1]\to TQ\oplus T^*Q$ is critical for the Lagrange--d'Alembert--Pontryagin principle given in Definition \ref{LDAPPrin_Mechanics}.
\vskip 3pt
\item[(iv)]
The curve $(q,v,p):[t_0,t_1]\to TQ\oplus T^*Q$  is a solution of the \emph{Lagrange--d'Alembert--Pontryagin equations},
\begin{equation*}
p=\frac{\partial L}{\partial v}(q,v),\qquad v=\dot q\in\Delta_Q(q),\qquad \dot{p}-\frac{\partial L}{\partial q}(q,v)- F(q, \dot  q)\in\Delta_Q^\circ(q).
\end{equation*}
\vskip 3pt
\item[(v)]
The curve $(q,v,p):[t_0,t_1]\to TQ\oplus T^*Q$ is a solution of the forced Lagrange--Dirac system given in Definition \ref{def:LDfinite}, i.e.,
\[
\left((q(t),p(t),\dot{q}(t),\dot{p}(t)),{\mathrm d}_DL(q(t),v(t))- \tilde F(q(t),v(t))\right)\in D_{\Delta_Q}(q(t),p(t)).
\]
\end{itemize}
\end{theorem}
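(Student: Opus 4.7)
The plan is to establish the equivalences via two core calculations, (iii) $\Leftrightarrow$ (iv) and (iv) $\Leftrightarrow$ (v), supplemented by the classical equivalences (i) $\Leftrightarrow$ (ii) and (ii) $\Leftrightarrow$ (iv), which together close up all pairwise implications among (i)--(v).

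For (iii) $\Leftrightarrow$ (iv), I would take a variation of the augmented action $\int_{t_0}^{t_1}\!\big(L(q,v) + \langle p,\dot q - v\rangle\big)\,dt + \int_{t_0}^{t_1}\!\langle F(q,\dot q),\delta q\rangle\,dt$, integrate by parts the single term $\int\langle p,\delta\dot q\rangle\,dt$, and discard the boundary contribution $[\langle p,\delta q\rangle]_{t_0}^{t_1}$ using $\delta q(t_0)=\delta q(t_1)=0$. Collecting coefficients of the three independent variations then yields: from unconstrained $\delta v$, the fiber-derivative relation $p=\partial L/\partial v$; from unconstrained $\delta p$, the kinematic matching $v=\dot q$; and from $\delta q$ ranging over $\Delta_Q(q)$, the momentum inclusion $\dot p-\partial L/\partial q-F(q,\dot q)\in\Delta_Q^\circ(q)$. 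Combined with the imposed constraint $\dot q\in\Delta_Q(q)$, these are exactly the Lagrange--d'Alembert--Pontryagin equations of (iv); the reverse implication reads the same calculation backward.

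For (iv) $\Leftrightarrow$ (v), I would unpack the Dirac structure $D_{\Delta_Q}$ in canonical coordinates. The symplectic flat map satisfies $\Omega_{T^*Q}^\flat(q,p,\dot q,\dot p)=-\dot p\,dq+\dot q\,dp$, and the annihilator $\Delta_{T^*Q}^\circ(q,p)$ consists of 1-forms $A\,dq+B\,dp$ with $B=0$ and $A\in\Delta_Q^\circ(q)$, since $\Delta_{T^*Q}=(T\pi_Q)^{-1}(\Delta_Q)$ imposes no constraint on the fiber direction. Inserting the coordinate expressions $\mathrm{d}_D L(q,v)=(q,\partial L/\partial v,-\partial L/\partial q,v)$ and $\widetilde F(q,v)=(q,\partial L/\partial v,F(q,v),0)$ from the Tulczyjew diagram, and equating coefficients of $dq$ and $dp$ in the 1-form $(\mathrm{d}_D L-\widetilde F)-\Omega_{T^*Q}^\flat(\dot q,\dot p)$, I would read off the base-point identity $p=\partial L/\partial v$, the primary equation $v=\dot q$ from the $dp$-coefficient (forced to vanish since $B=0$), and the momentum equation $\dot p-\partial L/\partial q-F\in\Delta_Q^\circ(q)$ from the $dq$-coefficient; the condition $(\dot q,\dot p)\in\Delta_{T^*Q}(q,p)$ contributes $\dot q\in\Delta_Q(q)$. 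This reproduces (iv) exactly.

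The remaining equivalences are routine: (i) $\Leftrightarrow$ (ii) follows from the fundamental lemma of the calculus of variations applied to $\delta\int L(q,\dot q)\,dt+\int\langle F,\delta q\rangle\,dt$ with $\delta q\in\Delta_Q(q)$ vanishing at the endpoints, while (ii) $\Leftrightarrow$ (iv) is obtained by eliminating $v=\dot q$ and $p=\partial L/\partial v(q,\dot q)$ and expanding $\dot p$ via the chain rule. I expect the main obstacle to be the coordinate bookkeeping in (iv) $\Leftrightarrow$ (v): the four components of elements of $T^*(T^*Q)$ and their signs under $\gamma_Q$ must be tracked carefully, and one must correctly pair the $dp$-coefficient of the annihilator condition (which must vanish identically, since $\Delta_{T^*Q}^\circ$ is trivial in the fiber direction) with the primary kinematic equation, and the $dq$-coefficient (constrained only to lie in $\Delta_Q^\circ$) with the momentum equation. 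Once this dictionary is in place the matching is purely algebraic.
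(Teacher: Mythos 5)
Your proposal is correct and follows essentially the same route as the paper: the paper states Theorem \ref{Theorem:LDfinitedim} without proof (deferring to \cite{YoMa2006a}), but the computations you outline --- collecting coefficients of the independent variations $\delta v$, $\delta p$, $\delta q\in\Delta_Q(q)$ for (iii)$\Leftrightarrow$(iv), and unpacking $D_{\Delta_Q}$ via the base-point matching $p=\partial L/\partial v$, the vanishing $dp$-coefficient, and the $dq$-coefficient lying in $\Delta_Q^\circ(q)$ for (iv)$\Leftrightarrow$(v) --- are exactly the ones the paper carries out for the infinite-dimensional analogues in Propositions \ref{prop:lagrangediracfunctions} and \ref{LdAPPrinciple_FamilyCinftyM}. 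No gaps; the sign conventions you record for $\Omega_{T^*Q}^\flat$ and $\gamma_Q$ agree with the Tulczyjew diagram in \S\ref{sec:mechanicsprelim}.
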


In this paper we shall formulate extensions of this theorem to the infinite-dimensional case, allowing the treatment of systems with body and boundary external forces, such as control forces. Specifically, we will not consider any constraints in this paper, which corresponds to the case $ \Delta _Q=TQ$.

\subsection{Fr\'{e}chet spaces and dual systems}\label{sec:frechet}

We recall that a Fr\'echet space is a locally convex topological vector space, whose topology is induced by a complete invariant metric.
In this work, we focus on the Fr\'{e}chet space $V=C^\infty(\mathcal B)$ of smooth functions on a bounded domain $\mathcal B\subset\mathbb R^m$ with smooth boundary and, more generally, on the space $V=\Omega^k(M)$ of smooth $k$-forms on a finite-dimensional, compact smooth manifold $M$ with smooth boundary.
The Fréchet topology of $C^\infty(\mathcal B)$ is the final topology induced by the map $C^\infty(\mathbb R^n)\ni f\mapsto f|_{\mathcal B}\in C^\infty(\mathcal B)$, i.e.,  the finest topology on $C^\infty(\mathcal B)$ that makes the previous map continuous.
In turn, the Fr\'{e}chet topology of $C^\infty(\mathbb R^m)$ is defined by the following family of seminorms:
\begin{equation}\label{eq:seminorms}
p_n(\varphi)=\max\{|\partial_\alpha\varphi(x)|: x\in K_n,~|\alpha|\leq n\},\qquad\varphi\in C^\infty(\mathbb R^m),~n\in\mathbb N_0,
\end{equation}
where $\{K_n\subset\mathbb R^m\mid K_n\subset K_{n+1}^\circ,~n\in\mathbb N\}$ is a family of compact sets such that $\mathbb R^m=\bigcup_{n=1}^\infty K_n$, $\alpha=(\alpha_1,\dots,\alpha_m)\in\mathbb N_0^m$ is a multi-index, $|\alpha|=\alpha_1+\dots+\alpha_m$ denotes its length and $\partial_\alpha=\prod_{i=1}^m(\partial/\partial x^i)^{\alpha_i}$, being $x=(x^1,\dots,x^m)$ the standard (global) coordinates on $\mathbb R^m$.

Given a Fréchet space, we distinguish two dual spaces:
\begin{enumerate}
    \item \emph{Algebraic dual}, the space of linear functions from $V$ to the field, $\mathbb R$;
    \item \emph{Topological dual}, the space of linear and continuous functions from $V$ to the field, $\mathbb R$.
\end{enumerate}
Throughout the paper,  we will denote the latter by $V'$. Naturally, both dual spaces agree when $V$ is finite-dimensional. Observe that the pair $(V,V')$ is a \emph{dual system} in the sense of \cite[Chapter 23]{MeMeVoRa1997}, that is, $V'$ is a subspace of the algebraic dual and it is in weak non-degenerate duality with $V$, i.e., the condition $\alpha(\varphi)=0$ for each $\alpha\in V'$ implies that $\varphi=0$.
In addition, $V'$ is naturally endowed with the \emph{weak topology} \cite[\S 3.11]{Ru1991}, i.e., the weakest (smallest) topology on $V'$ such that $\langle\cdot,\varphi\rangle:V'\to\mathbb R$ is continuous for each $\varphi\in V$, where $\langle\cdot,\cdot\rangle$ denotes the duality pairing.

For the space $V=C^\infty(\mathcal{B})$ of smooth functions on a bounded domain, the topological dual is the space of \emph{supported distributions}, $C^\infty(\mathcal B)'=\dot C^{-\infty}(\mathcal B)$, which consists of (standard) distributions on any compact extension\footnote{A compact extension of $\mathcal B$ is any boundaryless, compact manifold $\tilde{\mathcal B}$ such that $\mathcal B\subset\tilde{\mathcal B}$ is a submanifold. For instance, one may consider the \emph{double copy} construction: $\tilde{\mathcal B}=(\mathcal B\sqcup\mathcal B)/\partial\mathcal B$, the disjoint union of two copies of the domain with the points on the boundary identified.} of $\mathcal B$ whose support is contained in $\mathcal B$.

As we will see below (cf. \S\ref{sec:LDfunctions} and \S\ref{sec:LDkforms}), the fiber derivatives of Lagrangians defined through a density lie in some subspace of the topological dual. For this reason, it is convenient to work with another dual system, $(V,V^\star)$, where $V^\star\subset V'$ is a linear subspace that will be called \emph{restricted dual space}. A key point in our development is the definition of the canonical symplectic form on $T^\star V=V\times V^\star$ (cf. \S\ref{sec:tulczyjewfunctions} and \S\ref{sec:tulczyjewkforms} for more details), which will be denoted by $\Omega_{T^\star V}\in\Omega^2(T^\star V)$. As we will show, the range of the corresponding flat map, $\Omega_{T^\star V}^\flat:T(T^\star V)\to T'(T^\star V)$, is the vector subspace given by $\operatorname{Im}\Omega_{T^\star V}^\flat=V\times V^\star\times V^\star\times V\subset V\times V^\star\times V'\times(V^\star)'=T'(T^\star V)$. In general, the inclusion $V^\star\subset V'$ is strict and, thus, $\Omega_{T^\star V}$ is a weak form (cf. Remark \ref{remark:weakvsstrong}).

\section{Lagrange--Dirac dynamical systems on the space of smooth functions}\label{sec:functions}

In this section, we develop the theory for infinite-dimensional Lagrange--Dirac dynamical systems whose configuration manifold is given by
\begin{equation*}
V=C^\infty(\mathcal B),
\end{equation*}
where $\mathcal B\subset\mathbb R^m$ is a bounded domain with smooth boundary. Henceforth, ${dx}$ and ${ds}$ will denote the volume form on $\mathcal B$ and the area element on $\partial\mathcal B$, respectively. The extension to the case where $ \mathcal{B} $ is a compact manifold with boundary is treated in \S\ref{sec:kforms} for the more general case of the space of $k$-forms.

\subsection{Restricted dual and restricted cotangent bundle}\label{sec:restricteddualfunctions}

As mentioned above, the partial derivatives of a Lagrangian defined through a density lie in a vector subspace of $V'$, which motivates the definition of the restricted dual space.

\begin{definition}\rm The \emph{restricted dual} of $V=C^\infty(\mathcal B)$ is defined as
\begin{equation*}
V^\star=C^\infty(\mathcal B)\times C^\infty(\partial\mathcal B)
\end{equation*}
and it is endowed with the product topology.
\end{definition}

The following standard result plays a main role in our approach, therefore we state it as a proposition.

\begin{proposition}\label{prop:restricteddualfunctions}\rm
The restricted dual $V^\star$ is a Fr\'{e}chet space and the map
\begin{equation}\label{eq:identificationrestricted}
\Psi:V^\star\to V',\quad\left\langle\Psi(\alpha,\alpha_\partial),\varphi\right\rangle=\int_{\mathcal B}\alpha\varphi\,{dx}+\int_{\partial\mathcal B}\,\alpha_\partial\varphi\,{ds},
\end{equation}
for each $\varphi\in V$ and $(\alpha,\alpha_\partial)\in V^\star$, is a continuous injection.
\end{proposition}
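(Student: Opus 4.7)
The plan is to verify the three claims (Fréchet, well-defined continuous map, injective) in turn, noting that only injectivity requires any real argument.

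First, to see that $V^\star$ is a Fréchet space, I would invoke the standard fact that a finite product of Fréchet spaces is Fréchet. Both factors $C^\infty(\mathcal B)$ and $C^\infty(\partial\mathcal B)$ are Fréchet spaces of smooth sections on a (manifold with or without boundary) compact domain, with topology induced by the usual countable family of $C^k$-seminorms on compacts analogous to \eqref{eq:seminorms}. The product topology on $V^\star$ is then induced by the collection of sums of corresponding seminorms, and completeness transfers coordinatewise.

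Next, I would check that $\Psi$ actually lands in $V'$ and is continuous. For fixed $(\alpha,\alpha_\partial)\in V^\star$, the functional $\Psi(\alpha,\alpha_\partial):\varphi\mapsto\int_{\mathcal B}\alpha\varphi\,dx+\int_{\partial\mathcal B}\alpha_\partial\varphi\,ds$ is linear, and the estimate
\begin{equation*}
|\langle\Psi(\alpha,\alpha_\partial),\varphi\rangle|\le\vol(\mathcal B)\,\|\alpha\|_{C^0(\mathcal B)}\,\|\varphi\|_{C^0(\mathcal B)}+\vol(\partial\mathcal B)\,\|\alpha_\partial\|_{C^0(\partial\mathcal B)}\,\|\varphi\|_{C^0(\partial\mathcal B)}
\end{equation*}
together with the continuity of the restriction map $C^\infty(\mathcal B)\to C^\infty(\partial\mathcal B)$ shows that $\Psi(\alpha,\alpha_\partial)$ is continuous with respect to the Fréchet topology of $V$, hence belongs to $V'$. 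Since $V'$ carries the weak topology, continuity of $\Psi:V^\star\to V'$ reduces to showing that, for each fixed $\varphi\in V$, the map $(\alpha,\alpha_\partial)\mapsto\langle\Psi(\alpha,\alpha_\partial),\varphi\rangle$ is continuous on $V^\star$; the same estimate above, now viewed with $\varphi$ fixed and $(\alpha,\alpha_\partial)$ varying, gives this at once.

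The only nontrivial step is injectivity, which I would carry out in two stages. Suppose $\Psi(\alpha,\alpha_\partial)=0$, i.e. $\int_{\mathcal B}\alpha\varphi\,dx+\int_{\partial\mathcal B}\alpha_\partial\varphi\,ds=0$ for every $\varphi\in C^\infty(\mathcal B)$. First I test against $\varphi\in C_c^\infty(\mathcal B^\circ)$; the boundary integral vanishes and the fundamental lemma of the calculus of variations yields $\alpha=0$ on the interior, hence on all of $\mathcal B$ by continuity. The identity then reduces to $\int_{\partial\mathcal B}\alpha_\partial\varphi|_{\partial\mathcal B}\,ds=0$ for all $\varphi\in C^\infty(\mathcal B)$. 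The key ingredient is that the restriction map $C^\infty(\mathcal B)\to C^\infty(\partial\mathcal B)$ is surjective (extend any $\psi\in C^\infty(\partial\mathcal B)$ to $C^\infty(\mathcal B)$ using a tubular neighbourhood/collar of $\partial\mathcal B$ and a cut-off function), so the vanishing forces $\int_{\partial\mathcal B}\alpha_\partial\psi\,ds=0$ for every $\psi\in C^\infty(\partial\mathcal B)$, whence $\alpha_\partial=0$ by the same fundamental lemma applied on $\partial\mathcal B$. The main potential obstacle is ensuring the surjectivity of the restriction map, but this is standard for a compact smooth manifold with smooth boundary, and it is exactly what allows the restricted dual to maintain the claimed nondegenerate pairing.
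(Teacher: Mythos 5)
Your proof is correct and follows essentially the same route as the paper: Fr\'echet-ness via the product of Fr\'echet spaces, injectivity via the fundamental lemma of the calculus of variations, and continuity via the same $C^0$ estimate on the two integrals. The only organizational differences are that you deduce continuity directly from the universal property of the weak (initial) topology rather than via the paper's sequential argument (which suffices since $V^\star$ is metrizable), and that you spell out the injectivity step the paper leaves implicit --- in particular the surjectivity of the trace map $C^\infty(\mathcal B)\to C^\infty(\partial\mathcal B)$, which is indeed the key point needed to conclude $\alpha_\partial=0$.
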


\begin{proof}
It is clear that the topology on $V^\star$ is Fr\'{e}chet, as the product of Fréchet spaces is a Fréchet space. The injectivity is a straightforward consequence of the Fundamental Lemma of the Calculus of Variations, which also gives the (weak) non-degeneracy of pairing. In order to prove the continuity, let $(\alpha^n,\alpha_\partial^n)_{n=1}^\infty$ be a sequence in $V^\star$ such that $(\alpha^n,\alpha_\partial^n)\overset{n}{\to}(\alpha,\alpha_\partial)\in V^\star$ with the product of the Fréchet topologies. Note that the compactness of $\mathcal B$ ensures that there exists $N\in\mathbb Z^+$ such that $\mathcal B\subset K_N$, where we are following the notations in \eqref{eq:seminorms}. Subsequently, the convergence of the previous sequence yields $p_N(\alpha^n-\alpha)\overset{n}{\to}0$ and $p_N(\alpha_\partial^n-\alpha_\partial)\overset{n}{\to}0$. In turn, this implies that $(\alpha^n,\alpha_\partial^n)_{n=1}^\infty$ uniformly converges to $(\alpha,\alpha_\partial)$. On the other hand, by the definition of weak topology, the sequence $(\Psi(\alpha^n,\alpha_\partial^n))_{n=1}^\infty$ is convergent to $\Psi(\alpha,\alpha_\partial)$ in $V'$ if and only if
\begin{equation}\label{eq:weakconvergence}
\langle\Psi(\alpha^n,\alpha_\partial^n),\varphi\rangle\overset{n}{\to}\langle\Psi(\alpha,\alpha_\partial),\varphi\rangle,\qquad\forall\varphi\in C^\infty(\mathcal B).
\end{equation}
Let us check this condition: let $\varphi\in C^\infty(\mathcal B)$, then
\begin{align*}
|\langle\Psi(\alpha^n,\alpha_\partial^n),\varphi\rangle-\langle\Psi(\alpha,\alpha_\partial),\varphi\rangle| & =\left|\int_{\mathcal B}\alpha^n\varphi\,{dx}+\int_{\partial\mathcal B}\,\alpha_\partial^n\varphi\,{ds}-\int_{\mathcal B}\alpha\varphi\,{dx}-\int_{\partial\mathcal B}\,\alpha_\partial\varphi\,{ds}\right|\\
& \leq\left|\int_{\mathcal B}(\alpha^n-\alpha)\varphi\,{dx}\right|+\left|\int_{\partial\mathcal B}(\alpha_\partial^n-\alpha_\partial)\varphi\,{ds}\right|\\
& \leq\int_{\mathcal B}|\alpha^n-\alpha|\cdot|\varphi|\,{dx}+\int_{\partial\mathcal B}|\alpha_\partial^n-\alpha_\partial|\cdot|\varphi|\,{ds}\\
& \leq K_1\,\max_{x\in\mathcal B}\{|\alpha^n(x)-\alpha(x)|\}+K_2\,\max_{x\in\partial\mathcal B}\{|\alpha_\partial^n(x)-\alpha_\partial(x)|\},
\end{align*}
where $K_1=\operatorname{vol}\mathcal B\cdot\max_{x\in\mathcal B}\{|\varphi(x)|\}$ and $K_2=\operatorname{vol}\partial\mathcal B\cdot\max_{x\in\partial\mathcal B}\{|\varphi(x)|\}$. Therefore, the uniform convergence of $(\alpha^n,\alpha_\partial^n)_{n=1}^\infty$ to $(\alpha,\alpha_\partial)$ ensures that \eqref{eq:weakconvergence} is satisfied and we conclude.
\end{proof}

\begin{remark}\rm Note that $V^\star$ is not a closed subspace of $V'$. In particular, the map $\Psi$ is not closed.
Indeed, by contradiction suppose that for each sequence $(\alpha^n,\alpha_\partial^n)_{n=1}^\infty$ in $V^\star$ such that $(\Psi(\alpha^n,\alpha_\partial^n))_{n=1}^\infty$ is convergent to some $\psi\in V'$, then $\psi\in\Psi(V^\star)$, i.e., there exists $(\alpha,\alpha_\partial)\in V^\star$ such that $\psi=\Psi(\alpha,\alpha_\partial)$. Hence, the weak convergence condition \eqref{eq:weakconvergence} yields
\begin{equation*}
\int_{\mathcal B}|\alpha^n-\alpha|\,{dx}\overset{n}{\to}0,\qquad\int_{\partial\mathcal B}\,|\alpha_\partial^n-\alpha_\partial|\,{ds}\overset{n}{\to}0.
\end{equation*}
Then $(\alpha^n,\alpha_\partial^n)\overset{n}{\to}(\alpha,\alpha_\partial)$ pointwisely (up to a subsequence). Nevertheless, we have the following counterexample. Let $\mathcal B=[-1,1]\subset\mathbb R$ and $(\alpha^n,\alpha_\partial^n)_{n=1}^\infty=(\sqrt{x^2+1/n},0)_{n=1}^\infty$ in $V^\star=C^\infty([-1,1])\times\mathbb R^2$, where we have used that $\partial[-1,1]=\{-1,1\}$ and $C^\infty(\{-1,1\})=\mathbb R^2$. It is clear that the previous sequence pointwisely converges to $(\alpha,\alpha_\partial)=(|x|,0)\notin C^\infty([-1,1])\times\mathbb R^2$, while $\Psi(\alpha^n,\alpha_\partial^n)\overset{n}{\to}\Psi(\alpha,\alpha_\partial)$ in $V'$ since
\begin{equation*}
\langle\Psi(\alpha^n,\alpha_\partial^n)-\Psi(\alpha,\alpha_\partial),\varphi\rangle\leq 2K_\varphi\int_0^1\left(\sqrt{x^2+\frac{1}{n}}-x\right){dx}\overset{n}{\to}0,\quad\forall\;\varphi\in V,
\end{equation*}
where $K_\varphi=\max_{x\in[-1,1]}\{\varphi(x)\}$.
\end{remark}

Thanks to the previous proposition, in the following we identify $V^\star\ni(\alpha,\alpha_\partial)\simeq\Psi(\alpha,\alpha_\partial)\in V'$, thus regarding the restricted dual $V^\star$ as a subspace of $V'$. Note that the $L^2$-pairing given above is  weakly non-degenerate, hence $(V, V^\star)$ is a dual system. Since $V$ is a vector space, its tangent and cotangent bundles are trivial,
\begin{equation*}
TV=V\times V,\qquad T'V=V\times V'.
\end{equation*}
The \emph{restricted cotangent bundle} is defined to be
\begin{equation*}
T^\star V=V\times V^\star\subset T'V,
\end{equation*}
where the inclusion is understood through the identification \eqref{eq:identificationrestricted}, which makes $T^\star V$ a subbundle\footnote{In this paper by a subbundle of $T'V=V \times V'$ we simply mean a space of the form $V \times E$ with $E \subset V'$ a topological vector space continuously embedded in $V'$.} of $T'V$. The iterated bundles of $TV$ read
\begin{equation*}
T(TV)=V\times V\times V\times V,\qquad T'(TV)=V\times V\times V'\times V'.
\end{equation*}
Analogously, the \emph{restricted cotangent bundle} of $TV$ is defined as
\begin{equation*}
T^\star(TV)=V\times V\times V^\star\times V^\star\subset T'(TV).
\end{equation*}
At last, the iterated bundles of the restricted cotangent bundle are given by
\begin{equation*}
T(T^\star V)=V\times V^\star\times V\times V^\star,\qquad T'(T^\star V)=V\times V^\star\times V'\times(V^\star)',
\end{equation*}
where $(V^\star)'$ is the topological dual of $V^\star$. It is clear that $T(T^\star V)\subset T(T'V)$. On the other hand, note that we may regard $V\subset(V^\star)'$ by means of the $L^2$-pairing. This is due to the fact that $(V^\star,V)$ is a dual system too (cf. \cite[Chapter 23]{MeMeVoRa1997}). More specifically, this inclusion is given by the following assignment:
\begin{equation}\label{eq:idenficationdualrestricted}
V\ni\varphi\mapsto F_\varphi\in(V^\star)',
\end{equation}
where $F_\varphi(\alpha,\alpha_\partial)=\left\langle(\alpha,\alpha_\partial),\varphi\right\rangle$ for each $(\alpha,\alpha_\partial)\in V^\star$. The image of the map $F$, denoted $(V^\star)^\star\subset (V^\star)'$, is canonically identified with $V$.

Again, for simplicity we denote both objects by the same symbol, $\varphi\simeq F_\varphi$. Therefore, if we define the restricted cotangent bundle of $T^\star V$ as
\begin{equation*}
T^\star(T^\star V)=V\times V^\star\times V^\star\times V,    
\end{equation*}
where $(V^{\star})^{\star}\cong V$, it may be regarded as a subbundle of $T'(T^\star V)$.

In short, the restricted iterated bundles are given by
\begin{equation*}
\begin{array}{l}
T^\star(TV)=V\times V\times V^\star\times V^\star \subset T'(TV),\vspace{0.1cm}\\
T(T^\star V)=V\times V^\star\times V\times V^\star \subset T(T'V) ,\vspace{0.1cm}\\
T^\star(T^\star V)=V\times V^\star\times V^\star\times V\subset T'(T^\star V).
\end{array}
\end{equation*}
To conclude, observe that the topological Pontryagin bundle of the restricted cotangent bundle reads
\begin{equation*}
T(T^\star V)\oplus T'(T^\star V)= V\times V^\star\times
\big((V\times V^\star)\,\oplus \,(V'\times(V^\star)')\big).
\end{equation*}
The \emph{restricted Pontryagin bundle} of $T^\star V$ is a vector subbundle of the topological Pontryagin bundle defined as
\begin{equation*}
T(T^\star V)\oplus T^\star(T^\star V)=V\times V^\star\times\big((V\times V^\star) \,\oplus\, (V^\star\times V)\big).
\end{equation*}

\subsection{Canonical forms, Tulczyjew triple and the canonical Dirac structure}\label{sec:tulczyjewfunctions}

Now, using the restricted dual, the duality pairing introduced in Proposition \ref{prop:restricteddualfunctions} and the restricted iterated bundles, we define the canonical forms, the Tulczyjew triple and the canonical Dirac structure on $T^{\star}V=T^{\star}C^\infty(\mathcal B)$.
While we can use the usual definitions for all these objects, our choice of restricted dual space induces boundary terms that will play a crucial role in describing systems with energy boundary flow.

\begin{definition}\rm\label{DefCanForms}
The \emph{canonical one-form} on $T^\star V$, $\Theta_{T^\star V}\in\Omega^1(T^\star V)$, is defined as
$$
\Theta_{T^\star V}(z) \cdot \delta{z}=\left< z, T_{z}\pi_{V}(\delta z)\right>,\qquad z \in T^\star V,~\delta z\in T_{z}\left(T^\star V\right),
$$
where $\pi_{V}: T^\star V \to V$. Furthermore, the \emph{canonical symplectic two-form} on $T^\star V$, $\Omega_{T^\star V}\in\Omega^2(T^\star V)$, is defined as $\Omega_{T^\star V}=-{\rm d}\Theta_{T^\star V}$.
\end{definition}

Note that $ \Theta _{T^\star V}$ is a smooth form on the Fr\'echet space $T^\star V$, so that the exterior derivative can be computed in the usual sense.
Since $z=(\varphi,\alpha,\alpha_{\partial})\in T^\star V$, $\delta{z}=(\delta\varphi,\delta\alpha,\delta\alpha_{\partial}) \in T_{z}\left(T^\star V\right)\simeq V\times V^\star$, one gets
\begin{equation*}
\Theta_{T^\star V}(\varphi,\alpha,\alpha_\partial)\cdot(\delta\varphi,\delta\alpha,\delta\alpha_{\partial})=\left< (\varphi, \alpha,\alpha_\partial),\delta\varphi\right>=\int_{\mathcal B} \alpha \delta{\varphi} \,{dx} + \int_{\partial \mathcal B} \alpha_{\partial} \delta{\varphi}\,{ds}.
\end{equation*}
Then, the canonical symplectic form $\Omega_{T^\star V}=-{\rm d}\Theta_{T^\star V}$ is given by
\begin{align*}
\Omega_{T^\star V}(\varphi,\alpha,\alpha_\partial)((\dot\varphi,\dot\alpha,\dot\alpha_\partial),(\delta\varphi,\delta\alpha,\delta\alpha_\partial))&=\langle(\delta\alpha,\delta\alpha_\partial),\dot\varphi\rangle-\langle(\dot\alpha,\dot\alpha_\partial),\delta\varphi\rangle\\
&=\int_{\mathcal{B}} ( \delta\alpha \dot \varphi - \dot\alpha\delta\varphi) dx +\int_{\partial\mathcal{B}} (\delta\alpha_\partial  \dot \varphi - \dot\alpha_\partial \delta\varphi) ds,
\end{align*}
for each $(\varphi,\alpha,\alpha_\partial)\in T^\star V$ and $(\dot\varphi,\dot\alpha,\dot\alpha_\partial),~(\delta\varphi,\delta\alpha,\delta\alpha_\partial)\in T_{(\varphi,\alpha,\alpha_\partial)}(T^\star V)$.

\begin{proposition}\label{prop:Omegaflatfunctions}\rm
The flat map of the canonical symplectic form defines a vector bundle isomorphism over the identity, $\operatorname{id}_{T^\star V}$, between $T(T^\star V)$ and the restricted iterated bundle, $T^\star(T^\star V) \subset T'(T^\star V)$. Furthermore, under the identification of Proposition \ref{prop:restricteddualfunctions}, for each $(\varphi,\alpha,\alpha_\partial)\in T^\star V$, it is given by
\begin{equation*}
\Omega_{T^\star V}^\flat(\varphi,\alpha,\alpha_\partial):T_{(\varphi,\alpha,\alpha_\partial)}(T^\star V)\to T^\star_{(\varphi,\alpha,\alpha_\partial)}(T^\star V),\quad(\dot\varphi,\dot\alpha,\dot\alpha_\partial)\mapsto( -\dot\alpha,-\dot\alpha_\partial,\dot\varphi).
\end{equation*}
\end{proposition}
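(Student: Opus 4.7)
The plan is to read off the candidate formula directly from the coordinate expression for $\Omega_{T^\star V}$ displayed just above the statement, and then verify three things: (a) the proposed map realizes the defining pairing $\Omega_{T^\star V}^\flat(X)(Y) = \Omega_{T^\star V}(X, Y)$; (b) its image actually lies in the restricted cotangent bundle $T^\star(T^\star V) \subset T'(T^\star V)$; and (c) the resulting fibre-linear map is a bijection.

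For (a) and (b), I would start from
\[
\Omega_{T^\star V}(X,Y) = \int_{\mathcal B}(\delta\alpha\,\dot\varphi - \dot\alpha\,\delta\varphi)\,dx + \int_{\partial\mathcal B}(\delta\alpha_\partial\,\dot\varphi - \dot\alpha_\partial\,\delta\varphi)\,ds,
\]
for $X=(\dot\varphi,\dot\alpha,\dot\alpha_\partial)$ and $Y=(\delta\varphi,\delta\alpha,\delta\alpha_\partial)$, and group the four integrals into two pairings. The two terms linear in $\delta\varphi$ give $\langle(-\dot\alpha,-\dot\alpha_\partial),\delta\varphi\rangle$ via the duality pairing of Proposition~\ref{prop:restricteddualfunctions} identifying $V^\star$ with its image in $V'$. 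The two terms linear in $(\delta\alpha,\delta\alpha_\partial)$ give $\langle(\delta\alpha,\delta\alpha_\partial),\dot\varphi\rangle$ via the assignment~\eqref{eq:idenficationdualrestricted} identifying $V$ with $(V^\star)^\star \subset (V^\star)'$. Summing, $\Omega_{T^\star V}(X,Y)$ equals the evaluation of $(-\dot\alpha,-\dot\alpha_\partial,\dot\varphi)\in V^\star\times V = T^\star_{(\varphi,\alpha,\alpha_\partial)}(T^\star V)$ on $Y\in V\times V^\star$. This simultaneously proves the explicit formula and confirms that $\Omega^\flat_{T^\star V}$ actually lands in $T^\star(T^\star V)$, not merely in the full topological dual $T'(T^\star V)$.

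For (c), the assignment $(\dot\varphi,\dot\alpha,\dot\alpha_\partial)\mapsto(-\dot\alpha,-\dot\alpha_\partial,\dot\varphi)$ is manifestly a linear bijection from $V\times V^\star$ onto $V^\star\times V$, being essentially a permutation with a sign change; injectivity also follows a posteriori from the non-degeneracy of the $L^2$-pairing via the Fundamental Lemma of the Calculus of Variations, as used in the proof of Proposition~\ref{prop:restricteddualfunctions}. Since both $T(T^\star V)$ and $T^\star(T^\star V)$ are trivial bundles over $T^\star V$ and $\Omega^\flat_{T^\star V}$ is fibre-linear by construction, this upgrades to a vector bundle isomorphism over $\operatorname{id}_{T^\star V}$.

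The only delicate point, and the main conceptual hurdle, is keeping track of the two distinct identifications at play: the embedding $V^\star\hookrightarrow V'$ from Proposition~\ref{prop:restricteddualfunctions} (acting on the $\alpha$-slot) and the embedding $V\hookrightarrow (V^\star)'$ from~\eqref{eq:idenficationdualrestricted} (acting on the $\dot\varphi$-slot). Without the second identification, $\dot\varphi$ would have no a priori meaning as a component of an element of $T^\star(T^\star V)$. The fact that both pairings are non-degenerate, guaranteed by $(V,V^\star)$ being a dual system, is precisely what allows $\Omega^\flat_{T^\star V}$ to restrict to a genuine isomorphism onto $T^\star(T^\star V)$, even though $\Omega_{T^\star V}$ is only a weak symplectic form on the ambient bundle $T'(T^\star V)$.
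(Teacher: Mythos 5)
Your proposal is correct and follows essentially the same route as the paper: read the formula off the coordinate expression of $\Omega_{T^\star V}$, split the pairing according to the two identifications $V^\star\hookrightarrow V'$ and $V\hookrightarrow(V^\star)'$, and observe that the resulting map is a sign-twisted permutation of factors, hence invertible. The only detail the paper records that you leave implicit is the continuity of $\Omega^\flat_{T^\star V}$ and of its explicit inverse, which is immediate for a permutation of the factors of $V\times V^\star$ but worth stating since ``vector bundle isomorphism'' is meant topologically here.
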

\begin{proof}
By definition of flat map, for each $(\dot\varphi,\dot\alpha,\dot\alpha_\partial),(\delta\varphi,\delta\alpha,\delta\alpha_\partial)\in T_{(\varphi,\alpha,\alpha_\partial)}(T^\star V)$, we have
\begin{align*}
\left<\Omega_{T^\star V}^\flat(\varphi,\alpha,\alpha_\partial)(\dot\varphi,\dot\alpha,\dot\alpha_\partial),\; (\delta\varphi,\delta\alpha,\delta\alpha_\partial)\right>
&=\Omega_{T^\star V}(\varphi,\alpha,\alpha_\partial)\left((\dot\varphi,\dot\alpha,\dot\alpha_\partial),(\delta\varphi,\delta\alpha,\delta\alpha_\partial)\right)\\
&=\langle(\delta\alpha,\delta\alpha_\partial),\dot\varphi\rangle-\langle(\dot\alpha,\dot\alpha_\partial),\delta\varphi\rangle.
\end{align*}
Since the previous expression is valid for every $(\delta\varphi,\delta\alpha,\delta\alpha_\partial)\in T_{(\varphi,\alpha,\alpha_\partial)}(T^\star V)$ and by taking the identifications \eqref{eq:identificationrestricted} and \eqref{eq:idenficationdualrestricted} into account, we may write
\begin{equation*}
\Omega_{T^\star V}^\flat(\varphi,\alpha,\alpha_\partial) (\dot\varphi,\dot\alpha,\dot\alpha_\partial)=(-\dot\alpha,-\dot\alpha_\partial,\dot\varphi)\in T^\star_{(\varphi,\alpha,\alpha_\partial)}(T^\star V)\subset T'_{(\varphi,\alpha,\alpha_\partial)}(T^\star V).
\end{equation*}
To conclude, note that $\Omega_{T^\star V}^\flat$ is continuous, and its image is given by $\operatorname{Im}\Omega_{T^\star V}^\flat=T^\star(T^\star V)$. Hence, it is clear that its inverse $(\Omega_{T^\star V}^\flat)^{-1}: T^\star(T^\star V) \to T(T^\star V)$ is given by
\begin{equation*}
T^\star(T^\star V)\ni(\varphi,\alpha,\alpha_\partial,\,\dot\alpha,\dot\alpha_\partial,\dot\varphi)\mapsto(\varphi,\alpha,\alpha_\partial,\, \dot\varphi,-\dot\alpha,-\dot\alpha_\partial)\in T(T^\star V),
\end{equation*}
which is also continuous.
\end{proof}

\begin{remark}[$ \Omega _{T^\star V}$ as a strong form on the restricted  duals]\label{remark:weakvsstrong}\rm
Observe that the inclusion $T^\star(T^\star V)\subset T'(T^\star V)$ is strict.
Therefore, the canonical symplectic form, $\Omega_{T^\star {V}}$, is weak, since it does not define an isomorphism between $T(T^\star V)$ and $T'(T^\star V)$. If we confine ourselves to the restricted iterated bundle, then it becomes a strong form. In the following, we focus on the latter situation without further mention.
\end{remark}

\begin{definition}\rm
By mimicking the finite-dimensional case, we define the following canonical isomorphism over the identity, $\operatorname{id}_V$,
\begin{equation*}
\kappa_{T^\star V}:T(T^\star V)\to T^\star(TV),\quad(\varphi,\alpha,\alpha_\partial, \dot\varphi,\dot\alpha,\dot\alpha_\partial)\mapsto(\varphi,\dot\varphi, \dot\alpha,\dot\alpha_\partial,\alpha,\alpha_\partial).
\end{equation*}
In the same vein, we set $\gamma_{T^\star V}=\Omega_{T^\star V}^\flat\circ\kappa_{T^\star V}^{-1}$, which is explicitly given by
\begin{equation*}
\gamma_{T^\star V}:T^\star(TV)\to T^\star(T^\star V),\quad(\varphi,\dot\varphi, \alpha,\alpha_\partial,\dot\alpha,\dot\alpha_\partial)\mapsto(\varphi,\dot\alpha,\dot\alpha_\partial, -\alpha,-\alpha_\partial,\dot\varphi).
\end{equation*}

By gathering the previous isomorphisms, we obtain the \emph{restricted Tulczyjew triple} on the space of smooth functions as follows (compare with the analog diagram in \S\ref{sec:mechanicsprelim}):
\begin{equation*}
	\begin{tikzpicture}
			\matrix (m) [matrix of math nodes,row sep=0.1em,column sep=6em,minimum width=2em]
			{	T^\star(TV)) & T(T^\star V) & T^\star(T^\star V))\\
			\left(\varphi,\dot\varphi, \dot\alpha,\dot\alpha_\partial,\alpha,\alpha_\partial\right) & \left(\varphi,\alpha,\alpha_\partial, \dot\varphi,\dot\alpha,\dot\alpha_\partial\right) & \left(\varphi,\alpha,\alpha_\partial, -\dot\alpha,-\dot\alpha_\partial,\dot\varphi\right).\\};
			\path[-stealth]
			(m-1-1) edge [bend left = 25] node [above] {$\gamma_{T^\star V}$} (m-1-3)
			(m-1-2) edge [] node [above] {$\kappa_{T^\star V}$} (m-1-1)
			(m-1-2) edge [] node [above] {$\Omega_{T^\star V}^\flat$} (m-1-3)
			(m-1-3)
			(m-2-2) edge [|->] node [] {} (m-2-3)
			(m-2-2) edge [|->] node [] {} (m-2-1);
	\end{tikzpicture}
\end{equation*}

\end{definition}

\begin{proposition}\label{prop:diracstructurefunctions}\rm
The subbundle $D_{T^\star V} =\operatorname{graph}\,\Omega_{T^\star V}^\flat$ of the restricted Pontryagin bundle of $T(T^\star V)\oplus T^\star(T^\star V)$ is a Dirac structure on  $T^\star V$, which is called the \emph{canonical Dirac structure} on $T^\star V$.
\end{proposition}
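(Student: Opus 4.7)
The plan is to verify directly that $D_{T^\star V}=\operatorname{graph}\Omega_{T^\star V}^\flat$ satisfies $D_{T^\star V}^\perp=D_{T^\star V}$ when the orthogonal is taken inside the \emph{restricted} Pontryagin bundle $T(T^\star V)\oplus T^\star(T^\star V)$. The argument factors cleanly into three points: (a) the graph actually lives in the restricted Pontryagin bundle; (b) isotropy, which follows from skew-symmetry of $\Omega_{T^\star V}$; and (c) maximality, which relies on Proposition~\ref{prop:Omegaflatfunctions} together with the non-degeneracy of the duality pairings between $V$ and $V^\star$.

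For (a), I would invoke Proposition~\ref{prop:Omegaflatfunctions} directly: for each $z=(\varphi,\alpha,\alpha_\partial)\in T^\star V$, the map $\Omega_{T^\star V}^\flat(z)$ has image in $T^\star_z(T^\star V)=V^\star\times V$, so every element of the graph is of the form $\bigl(w,\Omega_{T^\star V}^\flat(z)(w)\bigr)\in T_z(T^\star V)\times T^\star_z(T^\star V)$. For (b), I would take two elements $(w_i,\Omega_{T^\star V}^\flat(z)(w_i))\in D_{T^\star V}(z)$, $i=1,2$, and compute
\begin{equation*}
\bigl\langle\!\bigl\langle (w_1,\Omega_{T^\star V}^\flat(z)(w_1)),(w_2,\Omega_{T^\star V}^\flat(z)(w_2))\bigr\rangle\!\bigr\rangle
=\Omega_{T^\star V}(z)(w_1,w_2)+\Omega_{T^\star V}(z)(w_2,w_1)=0,
\end{equation*}
yielding $D_{T^\star V}\subset D_{T^\star V}^\perp$.

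For (c), I would take an arbitrary $(w,\beta)\in T_z(T^\star V)\oplus T^\star_z(T^\star V)$ in the restricted Pontryagin bundle, assume it is orthogonal to every element of $D_{T^\star V}(z)$, and show it lies in the graph. The orthogonality condition
\begin{equation*}
\langle\beta,w'\rangle+\langle\Omega_{T^\star V}^\flat(z)(w'),w\rangle=0\qquad\text{for all }w'\in T_z(T^\star V)
\end{equation*}
can be rewritten, using the skew-symmetry identity $\langle\Omega_{T^\star V}^\flat(z)(w'),w\rangle=-\langle\Omega_{T^\star V}^\flat(z)(w),w'\rangle$, as $\langle\beta-\Omega_{T^\star V}^\flat(z)(w),w'\rangle=0$ for every $w'\in V\times V^\star$. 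Decomposing $\beta-\Omega_{T^\star V}^\flat(z)(w)=(\sigma,\sigma_\partial,\tau)\in V^\star\times V$ and testing against $w'=(\delta\varphi,0)$ and $w'=(0,\delta\alpha,\delta\alpha_\partial)$ separately yields
\begin{equation*}
\int_{\mathcal B}\sigma\,\delta\varphi\,dx+\int_{\partial\mathcal B}\sigma_\partial\,\delta\varphi\,ds=0,\qquad \langle(\delta\alpha,\delta\alpha_\partial),\tau\rangle=0,
\end{equation*}
for all admissible test elements. The non-degeneracy of the pairing supplied by Proposition~\ref{prop:restricteddualfunctions} forces $(\sigma,\sigma_\partial)=0$, and the fact that $(V^\star,V)$ is also a dual system (as noted after \eqref{eq:idenficationdualrestricted}) forces $\tau=0$. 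Hence $\beta=\Omega_{T^\star V}^\flat(z)(w)$ and $(w,\beta)\in D_{T^\star V}(z)$, giving $D_{T^\star V}^\perp\subset D_{T^\star V}$.

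The only subtle point—and the one I expect to be the main obstacle conceptually, although it is routine once set up correctly—is keeping track of \emph{which} Pontryagin bundle we work in. In the topological Pontryagin bundle $T(T^\star V)\oplus T'(T^\star V)$ the graph of $\Omega_{T^\star V}^\flat$ is not maximally isotropic (the pairing with elements in $V'\setminus V^\star$ or $(V^\star)'\setminus V$ would not be controlled), which is precisely the weakness noted in Remark~\ref{remark:weakvsstrong}. Working inside the restricted Pontryagin bundle converts the weak symplectic form into a strong one on that subbundle, and this is exactly what makes the non-degeneracy argument in step~(c) go through cleanly, allowing us to conclude that $D_{T^\star V}$ is a Dirac structure.
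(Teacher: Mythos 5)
Your proof is correct and follows essentially the same route as the paper's: isotropy via skew-symmetry of $\Omega_{T^\star V}$, and maximality by testing the orthogonality condition against tangent vectors purely in the $V$ direction and purely in the $V^\star$ direction, then invoking the weak non-degeneracy of both dual pairings $(V,V^\star)$ and $(V^\star,V)$. The only cosmetic difference is that you collect the condition into $\langle\beta-\Omega_{T^\star V}^\flat(z)(w),w'\rangle=0$ before decomposing into components, whereas the paper works component-wise from the start; your closing observation about why the argument must be run in the restricted rather than the topological Pontryagin bundle correctly matches Remark~\ref{remark:weakvsstrong}.
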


\begin{proof}
Let $(\varphi,\alpha,\alpha_\partial)\in T^\star V$. By using the expression of $\Omega_{T^\star  V}^\flat$ given in Proposition \ref{prop:Omegaflatfunctions}, we have
\begin{equation}\label{eq:DVfunctions}
\begin{aligned}
&D_{T^\star V}(\varphi,\alpha,\alpha_\partial)=\big\{(\dot\varphi,\dot\alpha,\dot\alpha_\partial, \delta\alpha,\delta\alpha_\partial,\delta\varphi)\in T_{(\varphi,\alpha,\alpha_\partial)}(T^\star V)\times T_{(\varphi,\alpha,\alpha_\partial)}^\star(T^\star V)\mid\\
& \hspace{9cm} -\dot\alpha=\delta\alpha,~-\dot\alpha_\partial=\delta\alpha_\partial,~\dot\varphi=\delta\varphi\big\},
\end{aligned} 
\end{equation}
and $D_{T^\star V}^\perp(\varphi,\alpha,\alpha_\partial)$ is given by
\begin{multline*}
D_{T^\star V}^\perp(\varphi,\alpha,\alpha_\partial)=\big\{\big(\dot\phi,\dot\beta,\dot\beta_\partial, \delta\beta,\delta\beta_\partial,\delta\phi\big)\in T_{(\varphi,\alpha,\alpha_\partial)}(T^\star V)\times T_{(\varphi,\alpha,\alpha_\partial)}^\star(T^\star V)\mid\\
\big\langle \! \big\langle \big(\dot\varphi,\dot\alpha,\dot\alpha_\partial, \delta\alpha,\delta\alpha_\partial,\delta\varphi\big),\big(\dot\phi,\dot\beta,\dot\beta_\partial, \delta\beta,\delta\beta_\partial,\delta\phi\big)\big\rangle \! \big\rangle =0,\\
\forall(\dot\varphi,\dot\alpha,\dot\alpha_\partial, \delta\alpha,\delta\alpha_\partial,\delta\varphi)\in D_{T^\star V}(\varphi,\alpha,\alpha_\partial)\big\}.
\end{multline*}

Let $\big(\dot\varphi,\dot\alpha,\dot\alpha_\partial, \delta\alpha,\delta\alpha_\partial,\delta\varphi\big),\big(\dot\phi,\dot\beta,\dot\beta_\partial, \delta\beta,\delta\beta_\partial,\delta\phi\big)\in D_{T^\star  V}(\varphi,\alpha,\alpha_\partial)$. From \eqref{eq:DVfunctions}, we have
\begin{multline*}
\big\langle \!\big\langle \big(\dot\varphi,\dot\alpha,\dot\alpha_\partial, \delta\alpha,\delta\alpha_\partial,\delta\varphi\big),\big(\dot\phi,\dot\beta,\dot\beta_\partial, \delta\beta,\delta\beta_\partial,\delta\phi\big) \big\rangle \! \big\rangle \\
=\big\langle\big(\delta\alpha,\delta\alpha_\partial\big),\dot\phi\big\rangle+\big\langle\big(\dot\beta,\dot\beta_\partial\big),\delta\varphi\big\rangle+\big\langle\left(\delta\beta,\delta\beta_\partial\right),\dot\varphi\big\rangle+\big\langle\big(\dot\alpha,\dot\alpha_\partial\big),\delta\phi\big\rangle\\
=-\big\langle\big(\dot\alpha,\dot\alpha_\partial\big),\dot\phi\big\rangle+\big\langle\big(\dot\beta,\dot\beta_\partial\big),\dot\varphi\big\rangle-\big\langle\big(\dot\beta,\dot\beta_\partial\big),\dot\varphi\big\rangle+\big\langle\big(\dot\alpha,\dot\alpha_\partial\big),\dot\phi\big\rangle=0.
\end{multline*}
Therefore, $D_{T^\star V}\subset D_{T^\star V}^\perp$.
\medskip

Reciprocally, let $\big(\dot\phi,\dot\beta,\dot\beta_\partial, \delta\beta,\delta\beta_\partial,\delta\phi\big)\in D_{T^\star V}^\perp(\varphi,\alpha,\alpha_\partial)$. Choose  
\begin{equation*}
(\dot\varphi,\dot\alpha,\dot\alpha_\partial, \delta\alpha,\delta\alpha_\partial,\delta\varphi)=(0,\dot\alpha,\dot\alpha_\partial, -\dot\alpha,-\dot\alpha_\partial,0)\in D_{T^\star V}(\varphi,\alpha,\alpha_\partial),\qquad(\dot\alpha,\dot\alpha_\partial)\in V^\star.
\end{equation*}
Then,
\begin{equation*}
\begin{split}
0&=\big\langle\!\big\langle(0,\dot\alpha,\dot\alpha_\partial, -\dot\alpha,-\dot\alpha_\partial,0),\big(\dot\phi,\dot\beta,\dot\beta_\partial, \delta\beta,\delta\beta_\partial,\delta\phi\big)\big\rangle\!\big\rangle\\
&\hspace{3cm}=-\big\langle\big(\dot\alpha,\dot\alpha_\partial\big),\dot\phi\big\rangle+\big\langle\big(\dot\alpha,\dot\alpha_\partial\big),\delta\phi\big\rangle=\big\langle\big(\dot\alpha,\dot\alpha_\partial\big),\delta\phi-\dot\phi\big\rangle.
\end{split}
\end{equation*}
Since this is valid for each $(\dot\alpha,\dot\alpha_\partial)\in V^\star$, we get $\delta\phi=\dot\phi$. Analogously, choose
\begin{equation*}
(\dot\varphi,\dot\alpha,\dot\alpha_\partial, \delta\alpha,\delta\alpha_\partial,\delta\varphi)=(\dot\varphi,0,0, \, 0,0,\dot\varphi)\in D_{T^\star V}(\varphi,\alpha,\alpha_\partial),\qquad\dot\varphi\in V.
\end{equation*}
Then, since
\begin{equation*}
\begin{split}
0&=\big\langle\!\big\langle(\dot\varphi,0,0,\, 0,0,\dot\varphi),\big(\dot\phi,\dot\beta,\dot\beta_\partial,\, \delta\beta,\delta\beta_\partial,\delta\phi\big)\big\rangle\!\big\rangle\\
&\hspace{3cm}=\big\langle\big(\dot\beta,\dot\beta_\partial\big),\dot\varphi\big\rangle+\big\langle\big(\delta\beta,\delta\beta_\partial\big),\dot\varphi\big\rangle=\big\langle\big(\dot\beta+\delta\beta,\dot\beta_\partial+\delta\beta_\partial\big),\dot\varphi\big\rangle
\end{split}
\end{equation*}
holds for each $\dot\varphi\in V$, we obtain $\delta\beta=-\dot\beta$ and $\delta\beta_\partial=-\dot\beta_\partial$. Since $\big(\dot\phi,\dot\beta,\dot\beta_\partial,\,\delta\beta,\delta\beta_\partial,\delta\phi\big)\in D_{T^\star V}(\varphi,\alpha,\alpha_\partial)$, we conclude $D_{T^\star V}^\perp \subset  D_{T^\star V}$. Therefore, $D_{T^\star V}^\perp= D_{T^\star V}$ as desired.
\end{proof}

\subsection{Infinite-dimensional Lagrange--Dirac dynamical systems}\label{sec:LDfunctions}
Consider a Lagrangian defined through a density, i.e.,
\begin{equation}\label{eq:lagrangianfunctions}
L:TV\to\mathbb R,\quad L(\varphi,\nu)=\int_{\mathcal B}\mathfrak L(\varphi(x),\nu(x),\nabla\varphi(x))\,{dx},
\end{equation}
where $\mathfrak L:\mathbb R\times\mathbb R\times\mathbb R^m\to\mathbb R$ is the Lagrangian density, and
\begin{equation*}
\nabla:C^\infty(\mathcal B)\to\mathfrak X(\mathcal B)\simeq C^\infty(\mathcal B,\mathbb R^m),\quad\varphi\mapsto\left(\frac{\partial\varphi}{\partial x^1},\dots,\frac{\partial\varphi}{\partial x^m}\right),
\end{equation*}
is the gradient, with $(x^1,\dots,x^m)$ the standard (global) coordinates of $\mathbb R^m$.

The partial functional  derivatives of the Lagrangian are the maps
\begin{equation}\label{def_delta_L}
\frac{\delta L}{\delta\varphi},~\frac{\delta L}{\delta \nu }:TV\to V',
\end{equation}
defined, for each $(\varphi, \nu)\in TV$, by
\begin{equation*}
\frac{\delta L}{\delta\varphi}(\varphi,\nu)(\delta\varphi)=\left.\frac{d}{d\epsilon}\right|_{\epsilon =0}L(\varphi+\epsilon\,\delta\varphi,\nu),\quad\frac{\delta L}{\delta \nu }(\varphi,\nu)(\delta\nu)=\left.\frac{d}{d\epsilon}\right|_{\epsilon=0}L(\varphi,\nu+\epsilon\,\delta\nu),
\end{equation*}
where $\delta\varphi, \delta\nu\in V$. In the following, the contraction between $\omega=\left(\omega_1,\dots,\omega_m\right)\in\Omega^1(\mathcal B)\simeq C^\infty(\mathcal B,\mathbb R^m)$ and $\zeta=\left(\zeta^1,\dots,\zeta^m\right)\in\mathfrak X(\mathcal B)\simeq C^\infty(\mathcal B,\mathbb R^m)$, which is given by the {\it standard inner product} on $\mathbb R^m$, will be denoted by $\omega\cdot\zeta=\sum_{i=1}^m\omega_i\zeta^i\in C^\infty(\mathcal B)$.

\begin{remark}[$x$-dependence]\rm The developments made in this paper remain valid in the more general case in which the Lagrangian density depends explicitly on $x \in \mathcal{B} $, i.e., we have $\mathfrak{L}: \mathcal{B}  \times \mathbb{R} \times \mathbb{R} \times \mathbb{R} ^m \rightarrow \mathbb{R} $, with associated Lagrangian
\begin{equation}\label{L_x}
L( \varphi , \nu )= \int_ \mathcal{B} \mathfrak{L}(x, \varphi (x), \nu (x), \nabla \varphi (x))dx,
\end{equation}
compare with \eqref{eq:lagrangianfunctions}. We refer to \S\ref{VM} for an example of this case.
\end{remark}

The following result ensures that the partial functional derivatives introduced above lie in the restricted dual, $V^\star\subset V'$, when the Lagrangian is defined through a density.

\begin{lemma}\label{lemma:partialderivativevector}\rm
Let $L:TV\to\mathbb R$ be a Lagrangian defined through a density, as in \eqref{eq:lagrangianfunctions}. Then the partial functional  derivatives of $L$ defined in \eqref{def_delta_L} lie in the resricted dual $V^\star \subset V^{\prime}$. Furthermore, under identification \eqref{eq:identificationrestricted}, they are given in terms of the partial derivatives of $\mathfrak{L}$ as
\begin{eqnarray*}
\frac{\delta L}{\delta\varphi}(\varphi,\dot\varphi) & = & \left(\frac{\partial\mathfrak L}{\partial\varphi}(\varphi,\dot\varphi,\nabla\varphi)-\operatorname{div}\frac{\partial\mathfrak L}{\partial\nabla\varphi}(\varphi,\dot\varphi,\nabla\varphi),~\left.\frac{\partial\mathfrak L}{\partial\nabla\varphi}(\varphi,\dot\varphi,\nabla\varphi)\right|_{\partial\mathcal B}\cdot n\right)\in V^\star,\\
\frac{\delta L}{\delta\nu}(\varphi,\dot\varphi) & = & \left(\frac{\partial\mathfrak L}{\partial\nu}(\varphi,\dot\varphi,\nabla\varphi),~0\right)\in V^\star,
\end{eqnarray*}
for each $(\varphi,\dot\varphi)\in TV$, where
\begin{equation*}
\operatorname{div}:\Omega^1(\mathcal B)\simeq C^\infty(\mathcal B,\mathbb R^m)\to C^\infty(\mathcal B),\quad\omega=\left(\omega^1,\dots,\omega^m\right)\mapsto\operatorname{div}\omega=\sum_{i=1}^m\frac{\partial\omega^i}{\partial x^i},
\end{equation*}
is the divergence, and $n\in C^\infty(\partial\mathcal B,\mathbb R^m)$ is the outward-pointing, unit, normal vector field on the boundary.
\end{lemma}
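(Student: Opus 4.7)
The plan is to compute both functional derivatives directly from their definitions as Gateaux derivatives, using differentiation under the integral sign and the divergence theorem to put the answer into the form claimed.

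First I would treat $\delta L/\delta\nu$, which is the easier of the two. Because $\mathfrak L$ is smooth and $\mathcal B$ is compact, the parameter-dependent integral $\epsilon\mapsto L(\varphi,\nu+\epsilon\,\delta\nu)$ is differentiable and the derivative can be exchanged with the integral, giving
\begin{equation*}
\frac{\delta L}{\delta\nu}(\varphi,\nu)(\delta\nu)=\int_{\mathcal B}\frac{\partial\mathfrak L}{\partial\nu}(\varphi,\nu,\nabla\varphi)\,\delta\nu\,dx,\qquad\delta\nu\in V.
\end{equation*}
The right-hand side is nothing but the pairing $\langle\Psi(\partial\mathfrak L/\partial\nu,0),\delta\nu\rangle$ from Proposition \ref{prop:restricteddualfunctions}, so under the identification \eqref{eq:identificationrestricted} one reads off that $\delta L/\delta\nu(\varphi,\nu)=(\partial\mathfrak L/\partial\nu,0)\in V^\star$.

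For $\delta L/\delta\varphi$ I would perform the analogous differentiation, now also using the chain rule to handle the dependence of $\mathfrak L$ on $\nabla\varphi$ through the linearity of $\nabla$:
\begin{equation*}
\frac{\delta L}{\delta\varphi}(\varphi,\nu)(\delta\varphi)=\int_{\mathcal B}\!\left(\frac{\partial\mathfrak L}{\partial\varphi}(\varphi,\nu,\nabla\varphi)\,\delta\varphi+\frac{\partial\mathfrak L}{\partial\nabla\varphi}(\varphi,\nu,\nabla\varphi)\cdot\nabla(\delta\varphi)\right)dx.
\end{equation*}
The key step is then to move the derivative off $\delta\varphi$ via the divergence theorem on $\mathcal B$, producing a bulk term containing $-\operatorname{div}(\partial\mathfrak L/\partial\nabla\varphi)$ and a boundary term involving $(\partial\mathfrak L/\partial\nabla\varphi)\cdot n$ integrated against $\delta\varphi|_{\partial\mathcal B}$. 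This is legitimate because $\mathcal B$ is a bounded domain with smooth boundary and all fields in sight are smooth. Gathering the bulk and boundary contributions, the right-hand side is exactly
\begin{equation*}
\left\langle\Psi\!\left(\frac{\partial\mathfrak L}{\partial\varphi}-\operatorname{div}\frac{\partial\mathfrak L}{\partial\nabla\varphi},\;\left.\frac{\partial\mathfrak L}{\partial\nabla\varphi}\right|_{\partial\mathcal B}\!\cdot n\right),\delta\varphi\right\rangle,
\end{equation*}
so by the injectivity of $\Psi$ proved in Proposition \ref{prop:restricteddualfunctions} the derivative $\delta L/\delta\varphi$ is identified with the claimed element of $V^\star$.

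The only nontrivial obstacle is bookkeeping the regularity and the application of the divergence theorem: one needs $\partial\mathfrak L/\partial\nabla\varphi$, evaluated along the smooth map $x\mapsto(\varphi(x),\nu(x),\nabla\varphi(x))$, to be a smooth vector field on $\mathcal B$ so that both $\operatorname{div}(\partial\mathfrak L/\partial\nabla\varphi)\in C^\infty(\mathcal B)$ and its normal trace $(\partial\mathfrak L/\partial\nabla\varphi)|_{\partial\mathcal B}\cdot n\in C^\infty(\partial\mathcal B)$ are well defined. This is immediate from the smoothness of $\mathfrak L$ and of $\varphi,\nu$, together with the smoothness of $\partial\mathcal B$. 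Everything else is a routine application of dominated convergence to justify differentiation under the integral, and the result follows.
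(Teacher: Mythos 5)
Your proposal is correct and follows essentially the same route as the paper's proof: differentiate under the integral, integrate by parts via the divergence theorem to split the bulk and boundary contributions, and conclude by the weak nondegeneracy of the pairing (injectivity of $\Psi$). The additional remarks on regularity and on justifying the exchange of derivative and integral are consistent with, and merely make explicit, what the paper leaves implicit.
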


\begin{proof}
Let $\delta\varphi\in V$. By definition, we have
\begin{align*}
\frac{\delta L}{\delta\varphi}(\varphi,\dot\varphi)(\delta\varphi) & =\left.\frac{d}{d\epsilon}\right|_{\epsilon=0}\int_{\mathcal B}\mathfrak L(\varphi+\epsilon\,\delta\varphi,\dot\varphi,\nabla(\varphi+\epsilon\,\delta\varphi))\,{dx}\\[2mm]
& =\int_{\mathcal B}\left(\frac{\partial\mathfrak L}{\partial\varphi}(\varphi,\dot\varphi,\nabla\varphi)\,\delta\varphi+\frac{\partial\mathfrak L}{\partial\nabla\varphi}(\varphi,\dot\varphi,\nabla\varphi)\,\nabla(\delta\varphi)\right){dx}\\[2mm]
& =\int_{\mathcal B}\left(\frac{\partial\mathfrak L}{\partial\varphi}(\varphi,\dot\varphi,\nabla\varphi)-\operatorname{div}\frac{\partial\mathfrak L}{\partial\nabla\varphi}(\varphi,\dot\varphi,\nabla\varphi)\right)\,\delta\varphi\,{dx}\\
& \qquad   +\int_{\partial\mathcal B}\left(\frac{\partial\mathfrak L}{\partial\nabla\varphi}(\varphi,\dot\varphi,\nabla\varphi)\cdot n\right)\,\delta\varphi\,{ds},
\end{align*}
where we have used the standard integration by parts formula. Analogously, for $\delta\dot\varphi\in V$,
\begin{equation*}
\frac{\delta L}{\delta\nu}(\varphi,\dot\varphi)(\delta\dot\varphi)=\left.\frac{d}{d  \epsilon}\right|_{\epsilon=0}\int_{\mathcal B}\mathfrak L(\varphi,\dot\varphi+\epsilon\,\delta\dot\varphi,\nabla\varphi)\,{dx}=\int_{\mathcal B}\frac{\partial\mathfrak L}{\partial\nu}(\varphi,\dot\varphi,\nabla\varphi)\,\delta\dot\varphi\,{dx}.
\end{equation*}
Since the previous expressions hold for every $\delta\varphi, \delta\dot\varphi \in V$, the proof is completed.
\end{proof}

\medskip 

The differential of $L$ is the map defined as
\begin{equation*}
{\rm d}L:TV\to T'(TV),\quad(\varphi,\dot\varphi)\mapsto\left(\varphi,\dot\varphi,\frac{\delta L}{\delta\varphi}(\varphi,\dot\varphi),\frac{\delta L}{\delta\nu}(\varphi,\dot\varphi)\right).
\end{equation*}
Observe that the previous Lemma ensures that $ {\rm d} L$ takes values in $T^\star(TV)$. Hence, the \emph{Dirac differential} of $L$ can be defined as
\begin{equation}\label{eq:DLfunctions}
{\mathrm d}_DL=\gamma_{T^\star V}\circ{\rm d}  L:TV\to T^\star(T^\star V),\quad(\varphi,\dot\varphi)\mapsto\left(\varphi,\frac{\delta L}{\delta\nu}(\varphi,\dot\varphi),-\frac{\delta L}{\delta\varphi}(\varphi,\dot\varphi),\dot\varphi\right).
\end{equation}

Lastly, we consider body and boundary forces acting on the system, such as control forces. To that end, recall that the \emph{Legendre transform} of $L$ is given by
\begin{equation*}\label{LegendreTrans}
\mathbb FL:TV\to T^\star V,\quad(\varphi,\nu)\mapsto\left(\varphi,\frac{\delta L}{\delta\nu}(\varphi,\nu)\right)=\left(\varphi,\frac{\partial\mathfrak L}{\partial\nu}(\varphi,\nu,\nabla\varphi),0\right).
\end{equation*}

\begin{definition}\label{ExtForces}\rm
Let $F:TV\to T^\star V$ be an \emph{external force} with values in $T^\star V$, and write
\begin{equation}\label{bbf} F(\varphi,\nu)=\left(\varphi,\mathcal F(\varphi,\nu),\mathcal F_\partial(\varphi,\nu)\right),
\end{equation}
for each $(\varphi,\nu)\in TV$, where $\mathcal F:TV\to C^\infty(\mathcal B)$ is the external body force acting on the interior of $\mathcal B$ and $\mathcal F_\partial:TV\to C^\infty(\partial\mathcal B)$ is the external force acting on the boundary, $\partial \mathcal B$. For a given Lagrangian $L:TV \rightarrow \mathbb{R} $, the associated \textit{Lagrangian force field} is the map $\widetilde{F}: TV \to T^{\star}\left(T^{\star}V\right)$ defined as in \eqref{def_tilde_F} by 
\[
\left\langle \widetilde{F}(\varphi,\nu) , W\right\rangle = \left\langle F(\varphi,\nu), T_{\mathbb FL(\varphi,\nu)}\pi_{V}(W) \right\rangle,\qquad(\varphi,\nu) \in TV,~W\in T_{\mathbb FL(\varphi,\nu)}\left(T^\star V\right),
\] 
where $\pi_V:T^{\star}V \to V$ is the natural projection and $T\pi_V:T\left(T^\star V\right) \to TV$ denotes its tangent map. More explicitly, it reads
\begin{equation*}
\widetilde{F}(\varphi,\nu)= \left(\varphi,\frac{\partial\mathfrak L}{\partial\nu}(\varphi,\nu, \nabla \varphi),0,\mathcal F(\varphi,\nu),\mathcal F_\partial(\varphi,\nu),0\right).
\end{equation*}
\end{definition}

\begin{remark}[Functional and time dependence of the force]\rm In \eqref{bbf} there is no restriction on the way the forces $ \mathcal{F} $ and $ \mathcal{F} _ \partial $ depend on the fields $ \varphi $ and $ \nu $. A typical situation is a dependence on the point values of $ \varphi $, $ \nu $,  and $ \nabla \varphi $, i.e., $ \mathcal{F} ( \varphi, \nu )(x)=\mathfrak{F}(x, \varphi (x), \nu (x), \nabla \varphi (x))$ for all $x \in \mathcal{B} $. An explicit time dependence can be also considered, which is typical in control problems, see \S\ref{telegraph} for an example.
\end{remark}

We are ready to introduce Lagrange--Dirac mechanical systems with body and boundary forces.

\begin{definition}\label{def:LDfunctions}\rm
Consider a Lagrangian $L:TV \rightarrow \mathbb{R}$ and an exterior force $F:TV \rightarrow T^\star V$ given in terms of a Lagrangian density $\mathfrak{L}$, and in terms of interior and boundary forces $ \mathcal{F} $, and $ \mathcal{F} _ \partial $ as in \eqref{eq:lagrangianfunctions} and \eqref{bbf}. The associated \emph{Lagrange--Dirac dynamical system with body and boundary forces} for a curve $(\varphi,\nu,\alpha,\alpha_\partial):[t_0,t_1]\to TV\oplus T^\star V$ in the Pontryagin bundle is given by
\begin{equation}\label{LagDiracSys_ExtForces}
\big((\varphi,\alpha,\alpha_\partial, \dot\varphi,\dot\alpha,\dot\alpha_\partial), {\mathrm d}_D L(\varphi,\nu)-\widetilde{F}(\varphi,\nu)\big)\in D_{T^\star V} (\varphi,\alpha,\alpha_\partial),
\end{equation}
where $\dot{(\,)}=\partial_{t}(\,)$ denotes the time derivative.
\end{definition}

\begin{proposition}\label{prop:lagrangediracfunctions}\rm
A curve $
(\varphi,\nu,\alpha,\alpha_\partial):[t_0,t_1]\to TV\oplus T^\star V=C^\infty(\mathcal B)\times C^\infty(\mathcal B)\times C^\infty(\mathcal B)\times C^\infty(\partial\mathcal B)
$ is a solution of the Lagrange--Dirac system with body and boundary forces \eqref{LagDiracSys_ExtForces} if and only if it satisfies the following system of equations:
\begin{equation}\label{LagDiracSysEq_ExtForce}
\left\{
\begin{array}{ll}
\dot\varphi=\nu, & \\[4mm]
\displaystyle\alpha=\frac{\partial\mathfrak L}{\partial\nu}(\varphi,\nu,\nabla\varphi),\qquad & \displaystyle\dot\alpha=\frac{\partial\mathfrak L}{\partial\varphi}(\varphi,\nu,\nabla\varphi)-\operatorname{div}\frac{\partial\mathfrak L}{\partial\nabla\varphi}(\varphi,\nu,\nabla\varphi)+\mathcal F(\varphi,\nu),\vspace{0.2cm}\\
\alpha_\partial=0, & \displaystyle\dot\alpha_\partial=\left.\frac{\partial\mathfrak L}{\partial\nabla\varphi}(\varphi,\nu,\nabla\varphi)\right|_{\partial\mathcal B}\cdot n+\mathcal F_\partial(\varphi,\nu).
\end{array}\right.
\end{equation}
\end{proposition}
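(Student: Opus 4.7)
The proof is essentially an unpacking of definitions once the three ingredients of the Lagrange--Dirac condition \eqref{LagDiracSys_ExtForces} are written out in their explicit forms. My plan is to combine Lemma \ref{lemma:partialderivativevector} (which computes the partial functional derivatives of $L$), formula \eqref{eq:DLfunctions} for the Dirac differential, Definition \ref{ExtForces} for the Lagrangian force field, and the explicit fiberwise description \eqref{eq:DVfunctions} of $D_{T^\star V}$.

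First, I would expand the point $\mathrm d_D L(\varphi,\nu)-\widetilde F(\varphi,\nu)\in T^\star(T^\star V)=V\times V^\star\times V^\star\times V$. Using \eqref{eq:DLfunctions} together with Lemma \ref{lemma:partialderivativevector}, the Dirac differential reads
\begin{equation*}
\mathrm d_D L(\varphi,\nu)=\Bigl(\varphi,\;\Bigl(\tfrac{\partial\mathfrak L}{\partial\nu},\,0\Bigr),\;-\Bigl(\tfrac{\partial\mathfrak L}{\partial\varphi}-\operatorname{div}\tfrac{\partial\mathfrak L}{\partial\nabla\varphi},\,\tfrac{\partial\mathfrak L}{\partial\nabla\varphi}\!\cdot\! n\Bigr),\;\nu\Bigr),
\end{equation*}
while the Lagrangian force field from Definition \ref{ExtForces} has base point $(\varphi,(\tfrac{\partial\mathfrak L}{\partial\nu},0))$ and cotangent part $(\mathcal F(\varphi,\nu),\mathcal F_\partial(\varphi,\nu),0)$. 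Since both terms share the same base point in $T^\star V$, their difference is well-defined, and for the pair in \eqref{LagDiracSys_ExtForces} to lie in the fiber of $D_{T^\star V}$ over $(\varphi,\alpha,\alpha_\partial)$, this common base point must equal $(\varphi,\alpha,\alpha_\partial)$. This yields the two algebraic relations $\alpha=\tfrac{\partial\mathfrak L}{\partial\nu}(\varphi,\nu,\nabla\varphi)$ and $\alpha_\partial=0$.

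Next, I would apply the characterization \eqref{eq:DVfunctions}. Writing the cotangent part of $\mathrm d_D L(\varphi,\nu)-\widetilde F(\varphi,\nu)$ as $(\delta\alpha,\delta\alpha_\partial,\delta\varphi)$, we have
\begin{equation*}
\delta\alpha=-\tfrac{\partial\mathfrak L}{\partial\varphi}+\operatorname{div}\tfrac{\partial\mathfrak L}{\partial\nabla\varphi}-\mathcal F(\varphi,\nu),\qquad\delta\alpha_\partial=-\tfrac{\partial\mathfrak L}{\partial\nabla\varphi}\!\cdot\! n-\mathcal F_\partial(\varphi,\nu),\qquad\delta\varphi=\nu.
\end{equation*}
Imposing the three Dirac conditions $\dot\varphi=\delta\varphi$, $-\dot\alpha=\delta\alpha$, $-\dot\alpha_\partial=\delta\alpha_\partial$ from \eqref{eq:DVfunctions} then produces exactly the first equation $\dot\varphi=\nu$ and the two evolution equations for $\dot\alpha$ and $\dot\alpha_\partial$ in \eqref{LagDiracSysEq_ExtForce}.

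The converse direction is immediate: if the five equations in \eqref{LagDiracSysEq_ExtForce} hold, then the algebraic relations ensure that $\mathrm d_D L(\varphi,\nu)-\widetilde F(\varphi,\nu)$ sits in the fiber of $T^\star(T^\star V)$ above $(\varphi,\alpha,\alpha_\partial)$, and the evolution equations together with $\dot\varphi=\nu$ are precisely the three defining relations of $D_{T^\star V}(\varphi,\alpha,\alpha_\partial)$. There is no real obstacle here; the only point requiring care is the bookkeeping of base points in $T^\star(T^\star V)$, where the equalities $\alpha=\tfrac{\partial\mathfrak L}{\partial\nu}$ and $\alpha_\partial=0$ arise as compatibility conditions rather than from the Dirac relations themselves, reflecting the fact that $\widetilde F$ is defined as a \emph{horizontal} lift along $\mathbb FL$.
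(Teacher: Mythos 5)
Your proposal is correct and follows exactly the route the paper takes: its proof of Proposition \ref{prop:lagrangediracfunctions} is the one-line remark that the result follows by direct computation from \eqref{eq:DVfunctions}, \eqref{eq:DLfunctions} and Definitions \ref{ExtForces} and \ref{def:LDfunctions}, and your write-up simply makes that computation explicit. Your observation that $\alpha=\tfrac{\partial\mathfrak L}{\partial\nu}$ and $\alpha_\partial=0$ come from matching base points rather than from the Dirac relations is also exactly the point the authors emphasize in the paragraph immediately following their proof.
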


\begin{proof}
We get the desired results by direct computations from Equations \eqref{eq:DVfunctions} and \eqref{eq:DLfunctions}, as well as Definitions \ref{ExtForces} and \ref{def:LDfunctions}.
\end{proof}
\medskip 

Observe that the second and fourth conditions in \eqref{LagDiracSysEq_ExtForce}, which can be written as $(\varphi,\alpha,\alpha_\partial)=\mathbb FL(\varphi,\nu)$, arise from the condition that the base point of the covector  ${\mathrm d}_D L(\varphi,\nu)-\widetilde{F}(\varphi,\nu)$ must be equal to $( \varphi , \alpha , \alpha _ \partial )$ from  \eqref{LagDiracSys_ExtForces}.

\medskip 
Observe also that by eliminating the variables $\nu $, $ \alpha $ and $ \alpha _ \partial$ by using the first, second, and fourth equation in \eqref{LagDiracSysEq_ExtForce}, we get the equation in terms of $\varphi $ uniquely, as
\begin{equation}\label{EL_ExtForce}
\left\{
\begin{array}{l}
\displaystyle\vspace{0.2cm}\frac{d}{dt}  \frac{\partial \mathfrak L}{\partial \dot  \varphi }(\varphi, \dot\varphi ,\nabla\varphi)-\frac{\partial\mathfrak L}{\partial\varphi}(\varphi, \dot\varphi ,\nabla\varphi)+\operatorname{div}\frac{\partial\mathfrak L}{\partial\nabla\varphi}(\varphi, \dot\varphi ,\nabla\varphi)=\mathcal F(\varphi, \dot\varphi)\\
\displaystyle\left.\frac{\partial\mathfrak L}{\partial\nabla\varphi}(\varphi, \dot\varphi ,\nabla\varphi)\right|_{\partial\mathcal B}\cdot n=-\mathcal F_\partial(\varphi,\dot  \varphi ).
\end{array}
\right.
\end{equation} 
These are the \textit{Lagrange--d'Alembert equations with body and boundary exterior forces}.

\medskip 

Recall that the \emph{energy density associated to $\mathfrak{L}$} is given by
\begin{equation*}
\mathfrak{E}(\varphi,\nu,\nabla\varphi)=\frac{\partial\mathfrak{L}}{\partial\nu}(\varphi,\nu,\nabla\varphi)\,\nu-\mathfrak{L}(\varphi,\nu,\nabla\varphi),\qquad(\varphi,\nu,\nabla\varphi)\in\mathbb R\times\mathbb R\times\mathbb R^m.
\end{equation*}

With this definition, we directly get the energy balance equations, which follow from a direct computation using \eqref{LagDiracSysEq_ExtForce}.

\begin{proposition}[Energy balance]\rm 
If $(\varphi,\nu,\alpha,\alpha_\partial):[t_0,t_1]\to TV\oplus T^\star V$ is a solution of \eqref{LagDiracSys_ExtForces}, then the local and global energy balance equations are
\begin{align*}
\frac{\partial }{\partial t}\mathfrak{E}(\varphi,\nu,\nabla\varphi)
=-\operatorname{div}\left(\frac{\partial\mathfrak L}{\partial\nabla\varphi}(\varphi,\nu,\nabla\varphi) \nu \right) +\mathcal F(\varphi,\nu) \nu 
\end{align*}
and
\begin{equation}\label{GEB_1}
\frac{d}{dt} \int_ \mathcal{B} \mathfrak{E}(\varphi,\nu,\nabla\varphi)\,{dx}
=
\underbrace{\int_{\mathcal B}\mathcal F(\varphi,\nu)\,\nu\,{dx}}_{\text{spatially distributed contribution}}+\underbrace{\int_{\partial\mathcal B}\mathcal F_\partial(\varphi,\nu)\,\nu\,{ds}}_{\text{boundary contribution}}.
\end{equation}
\end{proposition}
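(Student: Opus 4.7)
The plan is to differentiate the energy density along a solution, use the chain rule, and then substitute the second and fourth equations in \eqref{LagDiracSysEq_ExtForce} to kill the Lagrangian terms and expose a divergence plus a body force contribution. First, differentiating $\mathfrak{E}(\varphi,\nu,\nabla\varphi)=\frac{\partial\mathfrak{L}}{\partial\nu}\,\nu-\mathfrak{L}$ with respect to $t$ and using the chain rule on $\mathfrak{L}(\varphi,\nu,\nabla\varphi)$, the $\frac{\partial\mathfrak{L}}{\partial\nu}\,\dot\nu$ term cancels and, using the first equation $\dot\varphi=\nu$ of \eqref{LagDiracSysEq_ExtForce}, I obtain
\begin{equation*}
\frac{\partial}{\partial t}\mathfrak{E}(\varphi,\nu,\nabla\varphi)=\frac{d}{dt}\!\left(\frac{\partial\mathfrak{L}}{\partial\nu}\right)\nu-\frac{\partial\mathfrak{L}}{\partial\varphi}\,\nu-\frac{\partial\mathfrak{L}}{\partial\nabla\varphi}\cdot\nabla\nu.
\end{equation*}

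Next, I substitute the interior dynamical equations from \eqref{LagDiracSysEq_ExtForce}: the condition $\alpha=\frac{\partial\mathfrak{L}}{\partial\nu}(\varphi,\nu,\nabla\varphi)$ combined with the evolution equation for $\dot\alpha$ gives $\frac{d}{dt}\frac{\partial\mathfrak{L}}{\partial\nu}=\frac{\partial\mathfrak{L}}{\partial\varphi}-\operatorname{div}\frac{\partial\mathfrak{L}}{\partial\nabla\varphi}+\mathcal F(\varphi,\nu)$. Inserting this into the previous display, the two $\frac{\partial\mathfrak{L}}{\partial\varphi}\,\nu$ contributions cancel, and grouping the remaining terms through the product rule $\operatorname{div}(\omega\,\nu)=\operatorname{div}(\omega)\,\nu+\omega\cdot\nabla\nu$ yields the asserted local identity.

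For the global identity, I integrate the local balance over $\mathcal B$ and, assuming as usual that time differentiation commutes with the integral on the smooth class we work with, bring $\partial_t$ outside. I then apply the divergence theorem to the interior term:
\begin{equation*}
-\int_{\mathcal B}\operatorname{div}\!\left(\frac{\partial\mathfrak{L}}{\partial\nabla\varphi}\,\nu\right)dx=-\int_{\partial\mathcal B}\left(\left.\frac{\partial\mathfrak{L}}{\partial\nabla\varphi}\right|_{\partial\mathcal B}\!\!\cdot n\right)\nu\,ds.
\end{equation*}
Here I invoke the two boundary relations in \eqref{LagDiracSysEq_ExtForce}: $\alpha_\partial=0$, which forces $\dot\alpha_\partial=0$, together with $\dot\alpha_\partial=\left.\frac{\partial\mathfrak{L}}{\partial\nabla\varphi}\right|_{\partial\mathcal B}\!\!\cdot n+\mathcal F_\partial(\varphi,\nu)$, which yields $\left.\frac{\partial\mathfrak{L}}{\partial\nabla\varphi}\right|_{\partial\mathcal B}\!\!\cdot n=-\mathcal F_\partial(\varphi,\nu)$. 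Substituting this into the boundary integral turns the divergence contribution into $+\int_{\partial\mathcal B}\mathcal F_\partial\,\nu\,ds$, and combining with the $\mathcal F\,\nu$ interior term gives \eqref{GEB_1}.

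There is no serious obstacle: the computation is essentially the classical derivation of conservation of energy with external forcing, adapted to the present setting. The only points that warrant a word of care are, first, the cancellation of the Lagrangian terms through the chain rule, which relies crucially on the fact that $\alpha$ is on the Legendre image (i.e.\ $\alpha=\partial\mathfrak L/\partial\nu$), and, second, the correct handling of the boundary term, where it is precisely the vanishing of $\alpha_\partial$ that converts what would otherwise be a boundary trace of $\partial\mathfrak L/\partial\nabla\varphi$ into a genuine external boundary power $\mathcal F_\partial\,\nu$. Both features are built into the Lagrange--Dirac formulation, so the proof reduces to a bookkeeping exercise.
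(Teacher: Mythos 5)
Your proof is correct and follows exactly the route the paper intends: the paper states only that the result ``follows from a direct computation using \eqref{LagDiracSysEq_ExtForce}'', and the explicit proof it gives for the $k$-form analogue (Proposition \ref{prop:energykforms}) is precisely your argument — chain rule on $\mathfrak{E}$, cancellation of the $\partial\mathfrak L/\partial\nu\,\dot\nu$ terms, substitution of the $\dot\alpha$ equation, the product rule for the divergence, and finally the divergence theorem combined with $\alpha_\partial\equiv 0\Rightarrow\dot\alpha_\partial=0$ to convert the boundary trace into $\mathcal F_\partial\,\nu$. No gaps.
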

This equation shows the explicit form of each contribution to the energy change, both within the entire domain and through its boundary.

The energy balance equation \eqref{GEB_1} can be regarded as a Lagrangian analogue of the energy balance equation for the distributed (infinite dimensional) port-Hamiltonian system with external variables; see equation (48) in \cite{VdSMa2002}, where the rate of total energy is balanced with the external power flow through the boundary $\partial\mathcal B$ and the distributed external power flow. Note that $\mathcal F_\partial(\varphi,\nu)(x)$ and $\nu(x)$ are respectively understood as the external effort and flow variables on the boundary $\partial\mathcal B$, while $\mathcal F(\varphi,\nu)(x)$ and $\nu(x)$ are the external effort and flow variables on $\mathcal B$. The corresponding energy balance for systems on $k$-forms will be given in \S\ref{sec:kforms}.

\begin{remark}\rm
Observe that for the unforced case, i.e., $F\equiv 0$, the previous result yields local and global energy conservation equations:
\begin{align*}
\frac{\partial }{\partial t}\mathfrak{E}(\varphi,\nu,\nabla\varphi)
+\operatorname{div}\left(\frac{\partial\mathfrak L}{\partial\nabla\varphi}(\varphi,\nu,\nabla\varphi) \nu \right) =0\qquad\text{and}\qquad \frac{d}{dt} \int_ \mathcal{B} \mathfrak{E}(\varphi,\nu,\nabla\varphi)\,{dx}
=0.
\end{align*}
\end{remark}

\subsection{Variational structures for infinite-dimensional Lagrange--Dirac dynamical systems}\label{variational_structures_1}

Recall that a variational structure associated to the Lagrange--Dirac dynamical system obtained in \eqref{LagDiracSysEq_ExtForce} must give the solution curve $( \varphi , \nu,  \alpha , \alpha _ \partial ):[t_0,t_1] \rightarrow TV \oplus T^\star V$ \textit{in the Pontryagin bundle} as a critical point condition. Before considering this, we give in \S\ref{LdA_BF} the variational structure associated to the equations \eqref{EL_ExtForce} for the solution curve $ \varphi : [t_0,t_1] \rightarrow V$ obtained after having eliminated the variables $ \nu , \alpha , \alpha _ \partial $. This is nothing else than an infinite-dimensional version of the Lagrange--d'Alembert principle for forced systems.

\subsubsection{Lagrange--d'Alembert principle for infinite-dimensional Lagrangian systems}\label{LdA_BF}

The general expression of the Lagrange--d'Alembert principle for a Lagrangian $L:TV \rightarrow \mathbb{R} $ and an exterior force $F:TV \rightarrow T^\star V$ is given by
\begin{equation}\label{LdA_general} 
\delta \int_{t_{0}}^{t_{1}} L(\varphi, \dot{\varphi})dt +\int_{t_{0}}^{t_{1}} \left<F(\varphi,\dot{\varphi}),\delta{\varphi}\right>dt=0,
\end{equation} 
for free variations $\delta\varphi$ vanishing at $t=t_0,t_1$.
In our case $V=C^{\infty}(\mathcal B)$, $L$ and $F$ are given as in \eqref{eq:lagrangianfunctions} and \eqref{bbf}, so we get the following result.

\begin{proposition}\rm
A curve $\varphi: [t_{0},t_{1}] \to C^\infty(\mathcal B)$ satisfies the critical condition for the \emph{Lagrange--d'Alembert action functional}, i.e.,
\begin{equation}\label{LdA_bbf}
\delta \int_{t_{0}}^{t_{1}}\left( \int_{\mathcal B}\mathfrak L(\varphi,\dot\varphi,\nabla\varphi)\,{dx} \right)dt+\int_{t_{0}}^{t_{1}}\left( \int_{\partial\mathcal B}\mathcal F_\partial(\varphi,\dot{\varphi})\,\delta{\varphi}\,{ds}+\int_{\mathcal B}\mathcal F(\varphi,\dot{\varphi})\,\delta{\varphi}\,{dx}\right) dt=0,
\end{equation}
for free variations $\delta\varphi$ vanishing at $t=t_0,t_1$, if and only if it satisfies the \emph{Lagrange--d'Alembert equations} given in \eqref{EL_ExtForce}. 

\end{proposition}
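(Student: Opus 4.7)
The plan is to carry out the standard variational calculation on the action functional \eqref{LdA_bbf}, with the crucial point being that variations are only required to vanish at the temporal endpoints $t=t_0,t_1$, and are otherwise free on all of $\mathcal B$ (including on $\partial \mathcal B$). This freedom on the boundary trace of $\delta\varphi$ is what allows us to extract the boundary equation in \eqref{EL_ExtForce} alongside the interior Euler--Lagrange-type equation.

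First I would commute the variation with the time and space integrals and apply the chain rule to the density, writing
\[
\delta \mathfrak L(\varphi,\dot\varphi,\nabla\varphi) = \frac{\partial\mathfrak L}{\partial\varphi}\,\delta\varphi + \frac{\partial\mathfrak L}{\partial\nu}\,\delta\dot\varphi + \frac{\partial\mathfrak L}{\partial\nabla\varphi}\cdot\nabla(\delta\varphi).
\]
Then I would integrate by parts in time on the $\partial\mathfrak L/\partial\nu$-term, using $\delta\varphi(t_0,\cdot)=\delta\varphi(t_1,\cdot)=0$ to kill the endpoint contributions, which replaces that term by $-\frac{d}{dt}\frac{\partial\mathfrak L}{\partial\nu}\,\delta\varphi$. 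Next, I would apply the divergence theorem to the spatial gradient term, exactly as in the computation of $\delta L/\delta\varphi$ done in Lemma \ref{lemma:partialderivativevector}, obtaining
\[
\int_{\mathcal B}\frac{\partial\mathfrak L}{\partial\nabla\varphi}\cdot\nabla(\delta\varphi)\,dx = -\int_{\mathcal B}\operatorname{div}\!\frac{\partial\mathfrak L}{\partial\nabla\varphi}\,\delta\varphi\,dx + \int_{\partial\mathcal B}\left(\frac{\partial\mathfrak L}{\partial\nabla\varphi}\cdot n\right)\delta\varphi\,ds.
\]

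Combining these, the critical point condition \eqref{LdA_bbf} becomes
\[
\int_{t_0}^{t_1}\!\!\int_{\mathcal B}\!\left(\frac{\partial\mathfrak L}{\partial\varphi} - \frac{d}{dt}\frac{\partial\mathfrak L}{\partial\nu} - \operatorname{div}\!\frac{\partial\mathfrak L}{\partial\nabla\varphi} + \mathcal F\right)\!\delta\varphi\,dx\,dt
+ \int_{t_0}^{t_1}\!\!\int_{\partial\mathcal B}\!\left(\frac{\partial\mathfrak L}{\partial\nabla\varphi}\cdot n + \mathcal F_\partial\right)\!\delta\varphi\,ds\,dt = 0,
\]
for every $\delta\varphi\in C^\infty([t_0,t_1]\times\mathcal B)$ vanishing at $t=t_0,t_1$. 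To conclude, I would invoke the fundamental lemma of calculus of variations in two stages: first, restricting to variations $\delta\varphi$ that are compactly supported in the open interior $\mathring{\mathcal B}$ (and vanishing at the temporal endpoints), the boundary integral drops out and the arbitrariness of $\delta\varphi$ on $(t_0,t_1)\times\mathring{\mathcal B}$ yields the first equation of \eqref{EL_ExtForce}; then, once that interior equation is established, for fully free $\delta\varphi$ only the boundary integral survives, and the arbitrariness of the trace $\delta\varphi|_{\partial\mathcal B}$ yields the second, boundary equation. The converse direction is immediate: if $\varphi$ satisfies \eqref{EL_ExtForce}, then reversing the integrations by parts reconstructs the vanishing of the first variation.

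The only subtle point --- and really the reason the boundary equation appears at all --- is step five: one must not impose $\delta\varphi|_{\partial\mathcal B}=0$, as is customary in the classical case where the boundary values of $\varphi$ are fixed. Here the variational principle is formulated with genuinely free variations, which is consistent with our choice of restricted dual $V^\star = C^\infty(\mathcal B)\times C^\infty(\partial\mathcal B)$ accounting for both interior and boundary components. The separation into interior and boundary Euler--Lagrange conditions then follows from the standard density argument for the fundamental lemma applied separately on $\mathcal B$ and on $\partial\mathcal B$.
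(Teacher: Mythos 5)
Your proposal is correct and follows essentially the same route as the paper's proof: vary the density, integrate by parts in time (killing endpoint terms via $\delta\varphi(t_0)=\delta\varphi(t_1)=0$) and in space (producing the boundary integral via the divergence theorem), then use the freedom of $\delta\varphi$ to extract both the interior and the boundary equations of \eqref{EL_ExtForce}. The only difference is that you spell out the two-stage localization of the fundamental lemma (interior first, then boundary trace), which the paper compresses into the single remark that the variations are free and vanish only at $t=t_0,t_1$.
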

\begin{proof}
First we note that \eqref{LdA_bbf} follows from \eqref{LdA_general} by using \eqref{eq:lagrangianfunctions} and \eqref{bbf}. By taking the variations in \eqref{LdA_bbf}, we get 
\begin{equation*}
\begin{split}
&\int_{t_{0}}^{t_{1}}\int_{\mathcal B}\left( \frac{\partial \mathfrak L}{\partial \varphi} \delta\varphi +\frac{\partial \mathfrak L}{\partial\nu} \delta\dot\varphi+\frac{\partial \mathfrak L}{\partial \nabla\varphi}\delta\nabla\varphi\right)\,{dx}\, dt+\int_{t_{0}}^{t_{1}}\left( \int_{\partial\mathcal B}\mathcal F_\partial\,\delta{\varphi}\,{ds}+\int_{\mathcal B}\mathcal F\,\delta{\varphi}\,{dx}\right) dt\\[2mm]
&=\int_{t_{0}}^{t_{1}}\left[\int_{\mathcal B}\left(\frac{\partial \mathfrak L}{\partial \varphi}- \frac{\partial }{\partial t} \frac{\partial \mathfrak L}{\partial\nu}
-\operatorname{div}\frac{\partial\mathfrak L}{\partial\nabla\varphi} +\mathcal F\right)\delta\varphi \,{dx}\right]dt\\[2mm]
&\quad  +\int_{t_0}^{t_1}\left[\int_{\partial\mathcal B}\left(\left.\frac{\partial\mathfrak L}{\partial\nabla\varphi}\right|_{\partial\mathcal B}\cdot n+\mathcal F_\partial\right)\delta\varphi\,{ds} \right]dt+\int_{\mathcal B} \frac{\partial \mathfrak L}{\partial\nu}\delta \varphi \,{dx} \biggr{\arrowvert}_{t_{0}}^{t_{1}}\ =\ 0.
\end{split}
\end{equation*}
Since the variations $\delta\varphi$ are free and vanish at $t=t_0, t_1$, we get the result.
\end{proof}

\begin{remark}\rm
For the unforced case, the Lagrange--d'Alembert principle reduces to the Hamilton principle, and the Lagrange--d'Alembert equations yield the Euler--Lagrange equations.
\end{remark}

\subsubsection{Lagrange--d'Alembert--Pontryagin principle for infinite-dimensional Lagrange--Dirac dynamical systems}
Now, let us consider the variational principle for the solution curves of the infinite-dimensional Lagrange--Dirac systems on the Pontryagin bundle $TV \oplus T^\star V$ given in equation \eqref{LagDiracSys_ExtForces}, i.e., the Lagrange--d'Alembert--Pontryagin principle. Its general expression for a Lagrangian $L:TV \rightarrow \mathbb{R} $ and exterior force $F:TV \rightarrow T^\star V$, taking into account that $V=C^\infty( \mathcal{B} )$ and $V^\star= C^\infty( \mathcal{B} ) \times  C^\infty( \partial \mathcal{B} )$, is given as
\begin{equation}\label{LdAP2}
\delta\int_{t_{0}}^{t_{1}}\Bigl[ L(\varphi,\nu)+ \left< (\alpha, \alpha_{\partial}), (\dot\varphi-\nu)\right>\Bigr]dt+\int_{t_{0}}^{t_{1}} \left<F(\varphi,\dot{\varphi}),\delta{\varphi}\right> dt=0,
\end{equation}
for free variations $\delta\varphi,\delta \nu,\delta\alpha,\delta\alpha_\partial$ with $\delta\varphi$ vanishing at $t=t_0,t_1$. With $L$ and $F$ given as in \eqref{eq:lagrangianfunctions} and \eqref{bbf}, we get the following result.

\begin{proposition}\label{LdAPPrinciple_FamilyCinftyM}\rm
A curve $(\varphi,\nu,\alpha,\alpha_\partial):[t_0,t_1]\to TV\oplus T^\star V$ satisfies the critical condition for the {\it Lagrange--d'Alembert--Pontryagin action functional}, i.e., 
\begin{equation}\label{eq:LdAPcriticalfunctions}
\begin{split}
&\delta\int_{t_{0}}^{t_{1}}\left(\int_{\mathcal B}\mathfrak L(\varphi,\nu,\nabla\varphi)\,{dx}
+\int_{\mathcal B} \alpha (\dot\varphi-\nu) {dx} + \int_{\partial \mathcal B} \alpha_{\partial} (\dot\varphi-\nu)ds\right) dt\\[2mm]
&\hspace{5cm}
+\int_{t_{0}}^{t_{1}}\left(\int_{\partial\mathcal B}\mathcal F_\partial(\varphi,\dot{\varphi})\,\delta{\varphi}\,{ds}+\int_{\mathcal B}\mathcal F(\varphi,\dot{\varphi})\,\delta{\varphi}\,{dx}\right) dt\, = \, 0,
\end{split}
\end{equation}
for free variations $\delta\varphi,\delta \nu,\delta\alpha,\delta\alpha_\partial$ with $\delta\varphi$ vanishing at $t=t_0,t_1$, if and only if it satisfies the \emph{Lagrange--d'Alembert--Pontryagin equations}, which are exactly the equations of motion given in \eqref{LagDiracSysEq_ExtForce}.
\end{proposition}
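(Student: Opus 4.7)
The plan is to take variations of the Lagrange--d'Alembert--Pontryagin action functional in \eqref{eq:LdAPcriticalfunctions} with respect to each of the four independent fields $\varphi,\nu,\alpha,\alpha_\partial$ and match the resulting stationarity conditions with the four groups of equations in \eqref{LagDiracSysEq_ExtForce}. The main technical ingredient is Lemma \ref{lemma:partialderivativevector}, which splits the functional derivatives of $L$ into interior and boundary pieces lying in $V^\star=C^\infty(\mathcal B)\times C^\infty(\partial\mathcal B)$. Throughout, a ``two-step'' fundamental lemma of the calculus of variations is applied: first to variations compactly supported in $\mathcal B$ (yielding an interior equation), and then, using arbitrary boundary trace, to extract the corresponding boundary equation.

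First I would dispatch the easier variations. Varying $\alpha$ and $\alpha_\partial$ only affects the pairing term $\langle(\alpha,\alpha_\partial),\dot\varphi-\nu\rangle$ and immediately yields $\dot\varphi=\nu$ in $\mathcal B$ (the boundary condition being then automatic by continuity). Varying $\nu$ produces
\begin{equation*}
\int_{\mathcal B}\left(\frac{\partial\mathfrak L}{\partial\nu}(\varphi,\nu,\nabla\varphi)-\alpha\right)\delta\nu\,dx-\int_{\partial\mathcal B}\alpha_\partial\,\delta\nu\,ds=0,
\end{equation*}
and the two-step argument gives $\alpha=\partial\mathfrak L/\partial\nu$ on $\mathcal B$ together with $\alpha_\partial=0$ on $\partial\mathcal B$, which is exactly the Legendre-transform identity $(\varphi,\alpha,\alpha_\partial)=\mathbb FL(\varphi,\nu)$ built into \eqref{LagDiracSysEq_ExtForce}.

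The main work is the $\delta\varphi$ variation, which is the only one subject to a constraint (the endpoint condition $\delta\varphi(t_0)=\delta\varphi(t_1)=0$). Applying Lemma \ref{lemma:partialderivativevector} to $\delta L$, and integrating by parts in time on the $\alpha\,\delta\dot\varphi$ and $\alpha_\partial\,\delta\dot\varphi$ contributions (the temporal boundary terms vanish by the endpoint condition), the variational identity collects into
\begin{equation*}
\int_{t_0}^{t_1}\!\!\int_{\mathcal B}\left[\frac{\partial\mathfrak L}{\partial\varphi}-\operatorname{div}\frac{\partial\mathfrak L}{\partial\nabla\varphi}-\dot\alpha+\mathcal F\right]\delta\varphi\,dx\,dt+\int_{t_0}^{t_1}\!\!\int_{\partial\mathcal B}\left[\frac{\partial\mathfrak L}{\partial\nabla\varphi}\cdot n-\dot\alpha_\partial+\mathcal F_\partial\right]\delta\varphi\,ds\,dt=0.
\end{equation*}
The two-step fundamental lemma then produces exactly the evolution equations for $\dot\alpha$ and $\dot\alpha_\partial$ appearing in \eqref{LagDiracSysEq_ExtForce}. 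The reverse implication is immediate by running the same identity backwards.

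The main subtlety, rather than a genuine obstacle, is the clean separation of interior and boundary contributions at every step. This is precisely what the restricted dual structure $V^\star=C^\infty(\mathcal B)\times C^\infty(\partial\mathcal B)$ is designed to accommodate: each pairing and each functional derivative decomposes canonically into a volume part and a boundary part, so the variational calculation produces one independent condition on each component of $V^\star$. Had we worked with the topological dual $V'$ alone, the boundary terms generated by the integration by parts in Lemma \ref{lemma:partialderivativevector} would be invisible to the variational calculus, and the boundary energy-flow equation for $\dot\alpha_\partial$ would not appear.
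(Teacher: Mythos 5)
Your proposal is correct and follows essentially the same route as the paper's proof: expand the variation, use the interior/boundary decomposition of $\delta L/\delta\varphi$ from Lemma \ref{lemma:partialderivativevector} (spatial integration by parts), integrate by parts in time on the $\alpha\,\delta\dot\varphi$ and $\alpha_\partial\,\delta\dot\varphi$ terms with vanishing endpoint contributions, and read off one condition per component of $V^\star$ from the independence of $\delta\varphi,\delta\nu,\delta\alpha,\delta\alpha_\partial$. The only cosmetic difference is that you organize the computation variable by variable while the paper collects everything into a single display before invoking the fundamental lemma.
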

\begin{proof} First we note that \eqref{eq:LdAPcriticalfunctions} follows from \eqref{LdAP2} by using \eqref{eq:lagrangianfunctions} and \eqref{bbf}, as well as
\[
\left< (\alpha, \alpha_{\partial}), (\dot\varphi-\nu)\right>= \int_{\mathcal B} \alpha (\dot\varphi-\nu) {dx} + \int_{\partial \mathcal B} \alpha_{\partial} (\dot\varphi-\nu)ds.
\]
The variation of the Lagrange--d'Alembert--Pontryagin action functional \eqref{eq:LdAPcriticalfunctions} yields
\begin{equation*}\label{HPVarActIntegral}
\begin{split}
&\int_{t_{0}}^{t_{1}}\bigg[\int_{\mathcal B}\left(\frac{\partial\mathfrak L}{\partial\varphi}(\varphi,\nu,\nabla\varphi)-\operatorname{div}\frac{\partial\mathfrak L}{\partial\nabla\varphi}(\varphi,\nu,\nabla\varphi)\right)\delta\varphi\,{dx}+\int_{\partial\mathcal B}\left(\left.\frac{\partial\mathfrak L}{\partial\nabla\varphi}\right|_{\partial\mathcal B}\cdot n\right)\delta\varphi\,ds\\[3mm]
&\hspace{2cm}+
\int_{\mathcal B}\frac{\partial\mathfrak L}{\partial\nu}(\varphi,\nu,\nabla\varphi)\delta\nu\,{dx}+\left.\int_{\mathcal B} \delta\alpha (\dot{\varphi}-\nu) {dx} + \int_{\partial \mathcal B} \delta\alpha_{\partial} (\dot{\varphi}-\nu)ds
\right.\\[3mm]
&\hspace{2cm}+\int_{\mathcal B} \alpha (\delta\dot{\varphi}-\delta \nu) {dx} + \int_{\partial \mathcal B} \alpha_{\partial} (\delta\dot{\varphi}-\delta \nu)\,ds
\bigg]dt\\[3mm]
&\hspace{1cm}+\int_{t_{0}}^{t_{1}}\bigg[\int_{\partial\mathcal B}\mathcal F_\partial(\varphi,\dot{\varphi})\,\delta{\varphi}\,{ds}+\int_{\mathcal B}\mathcal F(\varphi,\dot{\varphi})\,\delta{\varphi}\,{dx}\bigg] dt\\[3mm]
&=\int_{t_{0}}^{t_{1}}\bigg[\int_{\mathcal B}\left(-\dot{\alpha}+\frac{\partial\mathfrak L}{\partial\varphi}(\varphi,\nu,\nabla\varphi)-\operatorname{div}\frac{\partial\mathfrak L}{\partial\nabla\varphi}(\varphi,\nu,\nabla\varphi)+\mathcal F(\varphi,\dot\varphi)\right)\delta\varphi\,{dx}
\\[3mm]
&\hspace{1cm}
\left.+\int_{\partial\mathcal B}\left(-\dot\alpha_{\partial} +\left.\frac{\partial\mathfrak L}{\partial\nabla\varphi}\right|_{\partial\mathcal B}\cdot n+\mathcal F_\partial(\varphi,\dot\varphi)\right)\delta\varphi\,ds
+
\int_{\mathcal B}\left(-\alpha + \frac{\partial\mathfrak L}{\partial\nu}(\varphi,\nu,\nabla\varphi)\right)\delta \nu\,{dx}
\right.\\[3mm]
&\hspace{1cm}+\int_{\mathcal B} \delta\alpha (\dot{\varphi}-\nu) {dx} + \int_{\partial \mathcal B} \delta\alpha_{\partial} (\dot{\varphi}-\nu)ds- \int_{\partial \mathcal B} \alpha_{\partial} \delta \nu ds
\bigg]dt.
\end{split}
\end{equation*}
Therefore, from the critical condition \eqref{eq:LdAPcriticalfunctions}, we get the Lagrange--d'Alembert--Pontryagin equations of motion, which are nothing but the system equations for the forced Lagrange--Dirac dynamical system obtained in equation \eqref{LagDiracSysEq_ExtForce}.
\end{proof}

\begin{remark}\rm
For unforced systems, the Lagrange--d'Alembert--Pontryagin principle reduces to the Hamilton--Pontryagin principle, and the Lagrange--d'Alembert--Pontryagin equations yield the Hamilton--Pontryagin equations.
\end{remark}

Lastly, analogously to the finite-dimensional case (recall Theorem \ref{Theorem:LDfinitedim}), the results for infinite-dimensional Lagrange--Dirac systems are summarized in the following theorem.

\begin{theorem}\rm\label{Theorem:LDfunctions}
Consider the curves $(\varphi,\nu,\alpha,\alpha_\partial):[t_0,t_1]\to TV\oplus T^\star V$ and $\varphi=\rho_V\circ(\varphi,\nu,\alpha,\alpha_\partial): [t_{0},t_{1}] \to V=C^\infty(\mathcal B)$, where $\rho_V:TV\oplus T^\star V\to V$ is the natural projection. The following statements are equivalent:
\begin{itemize}
\item[(i)] The curve $\varphi: [t_{0},t_{1}] \to V$ is critical for the Lagrange--d'Alembert principle; namely, it satisfies
\begin{equation*}
\delta\int_{t_{0}}^{t_{1}} L(\varphi, \dot{\varphi})dt+\int_{t_{0}}^{t_{1}} \left<F(\varphi,\dot{\varphi}),\delta{\varphi}\right>dt=0,
\end{equation*}
for free variations $\delta\varphi$ vanishing at $t=t_0, t_1$ (see \eqref{LdA_bbf} for the principle written in terms of $\mathfrak{L}$, $ \mathcal{F} $, and $ \mathcal{F} _ \partial $).
\vskip 3pt
\item[(ii)] The curve $\varphi: [t_{0},t_{1}] \to V$ is a solution of the Lagrange--d'Alembert equations: 
\begin{equation*}
\left\{\begin{array}{ll}
\displaystyle\frac{\partial }{\partial t}\frac{\partial\mathfrak L}{\partial\nu}(\varphi, \dot\varphi,\nabla\varphi)=\frac{\partial\mathfrak L}{\partial\varphi}(\varphi, \dot\varphi,\nabla\varphi)-\operatorname{div}\frac{\partial\mathfrak L}{\partial\nabla\varphi}(\varphi, \dot\varphi,\nabla\varphi)+\mathcal F(\varphi,\dot{\varphi}),\\[5mm]
\displaystyle\mathcal F_\partial(\varphi, \dot\varphi)=-
\left.\frac{\partial\mathfrak L}{\partial\nabla\varphi}(\varphi,\dot\varphi,\nabla\varphi)\right|_{\partial\mathcal B}\cdot n.
\end{array}\right.
\end{equation*}
\vskip 3pt
\item[(iii)] The curve $(\varphi,\nu,\alpha,\alpha_\partial):[t_0,t_1]\to TV\oplus T^\star V$ is critical for the Lagrange--d'Alembert--Pontryagin principle; namely, it satisfies
\begin{equation*}
\begin{split}
&\delta\int_{t_{0}}^{t_{1}}\bigl[ L(\varphi, \nu)+ \left< (\alpha, \alpha_{\partial}), (\dot{\varphi}-\nu)\right>\bigr]dt+\int_{t_{0}}^{t_{1}} \left<F(\varphi,\nu),\delta{\varphi}\right>dt=0,
\end{split}
\end{equation*}
for free variations $\delta\varphi,\delta \nu,\delta\alpha,\delta\alpha_\partial$ with $\delta\varphi$ vanishing at $t=t_0,t_1$ (see \eqref{eq:LdAPcriticalfunctions} for the principle written in terms of $\mathfrak{L}$, $ \mathcal{F} $, and $ \mathcal{F} _ \partial $).
\vskip 3pt
\item[(iv)]  The curve $(\varphi,\nu,\alpha,\alpha_\partial):[t_0,t_1]\to TV\oplus T^\star V$ is a solution of the Lagrange--d'Alembert--Pontryagin equations of motion:
\begin{equation*}
\left\{\begin{array}{ll}
\dot\varphi=\nu,\vspace{0.2cm}\\
\displaystyle\alpha=\frac{\partial\mathfrak L}{\partial\nu}(\varphi,\nu,\nabla\varphi),\qquad & \displaystyle\dot\alpha=\frac{\partial\mathfrak L}{\partial\varphi}(\varphi,\nu,\nabla\varphi)-\operatorname{div}\frac{\partial\mathfrak L}{\partial\nabla\varphi}(\varphi,\nu,\nabla\varphi)+\mathcal F(\varphi,\nu),\vspace{0.2cm}\\
\alpha_\partial=0, & \displaystyle\dot\alpha_\partial=
\left.\frac{\partial\mathfrak L}{\partial\nabla\varphi}(\varphi,\nu,\nabla\varphi)\right|_{\partial\mathcal B}\cdot n+\mathcal F_\partial(\varphi,\nu).
\end{array}\right.
\end{equation*}
\vskip 3pt
\item[(v)] The curve $(\varphi,\nu,\alpha,\alpha_\partial):[t_0,t_1]\to TV\oplus T^\star V$ is a solution of the Lagrange--Dirac dynamical system given by
\begin{equation*}
\big((\varphi,\alpha,\alpha_\partial,\,\dot\varphi,\dot\alpha,\dot\alpha_\partial),{\mathrm d}_D L(\varphi,\nu)- \widetilde{F}(\varphi,\nu)\big)\in D_{T^\star V}(\varphi,\alpha,\alpha_\partial).
\end{equation*}
\end{itemize}
\end{theorem}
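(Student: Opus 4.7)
The plan is to close the circle of five equivalences by gathering the three pairwise equivalences already proved earlier in the section and adding one short algebraic reduction. Specifically, (i) $\Leftrightarrow$ (ii) is exactly the proposition proved in \S\ref{LdA_BF}, (iii) $\Leftrightarrow$ (iv) is Proposition \ref{LdAPPrinciple_FamilyCinftyM}, and (iv) $\Leftrightarrow$ (v) is Proposition \ref{prop:lagrangediracfunctions}. Thus the only implication left to verify is (ii) $\Leftrightarrow$ (iv), after which the five statements are tied into a single chain.

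For (iv) $\Rightarrow$ (ii), I would start from a solution $(\varphi,\nu,\alpha,\alpha_\partial)$ of the Lagrange--d'Alembert--Pontryagin equations and use the algebraic relations $\nu=\dot\varphi$, $\alpha=\partial\mathfrak L/\partial\nu(\varphi,\dot\varphi,\nabla\varphi)$, $\alpha_\partial=0$ to eliminate $\nu$, $\alpha$, $\alpha_\partial$. Substituting into the evolution equation for $\dot\alpha$ yields the interior Lagrange--d'Alembert equation of (ii). For the boundary piece, the condition $\alpha_\partial\equiv 0$ forces $\dot\alpha_\partial=0$, and the equation for $\dot\alpha_\partial$ in (iv) then reduces to
\begin{equation*}
\mathcal F_\partial(\varphi,\dot\varphi)=-\left.\frac{\partial\mathfrak L}{\partial\nabla\varphi}(\varphi,\dot\varphi,\nabla\varphi)\right|_{\partial\mathcal B}\cdot n,
\end{equation*}
which is the boundary condition of (ii). For the converse (ii) $\Rightarrow$ (iv), given a curve $\varphi$ satisfying the Lagrange--d'Alembert equations, I would define $\nu:=\dot\varphi$, $\alpha:=\partial\mathfrak L/\partial\nu(\varphi,\dot\varphi,\nabla\varphi)$ and $\alpha_\partial:=0$, and verify by direct substitution that all five equations of (iv) hold; the interior evolution equation is then exactly the first equation of (ii), while $\dot\alpha_\partial=0$ together with the boundary condition of (ii) gives the fifth equation.

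Combining these links with the earlier propositions yields the cycle (i) $\Leftrightarrow$ (ii) $\Leftrightarrow$ (iv) $\Leftrightarrow$ (iii) and (iv) $\Leftrightarrow$ (v), which establishes the full equivalence. I do not anticipate serious technical obstacles: the Dirac-theoretic content has already been handled in Propositions \ref{prop:Omegaflatfunctions}--\ref{prop:lagrangediracfunctions}, the variational content in Proposition \ref{LdAPPrinciple_FamilyCinftyM} and the proposition in \S\ref{LdA_BF}, and the remaining step (ii) $\Leftrightarrow$ (iv) is purely algebraic elimination. The only mildly delicate point is to record that the condition $\alpha_\partial\equiv 0$ along the curve is consistent with the prescribed boundary force $\mathcal F_\partial$, which is guaranteed precisely because $(\varphi,\nu,\alpha,\alpha_\partial)$ satisfies the Legendre-type constraints $(\varphi,\alpha,\alpha_\partial)=\mathbb FL(\varphi,\nu)$ enforced by the Dirac structure, as noted after Proposition \ref{prop:lagrangediracfunctions}.
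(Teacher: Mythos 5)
Your proposal is correct and takes essentially the same route as the paper, which states this theorem as a summary of the preceding results: (i)$\Leftrightarrow$(ii) from the proposition in \S\ref{LdA_BF}, (iii)$\Leftrightarrow$(iv) from Proposition \ref{LdAPPrinciple_FamilyCinftyM}, (iv)$\Leftrightarrow$(v) from Proposition \ref{prop:lagrangediracfunctions}, and the elimination step (ii)$\Leftrightarrow$(iv) exactly as in the paper's derivation of \eqref{EL_ExtForce} from \eqref{LagDiracSysEq_ExtForce}.
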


While (i) and (ii) are statements about a curve $ \varphi :[t_0, t_1] \rightarrow V$ in the configuration space of the system, (iii)--(v) are statements about a curve $( \varphi , \nu , \alpha , \alpha _ \partial ):[t_0,t_1] \rightarrow TV \oplus T^\star V$
in the restricted Pontryagin bundle of the configuration space. The equivalence of all these statements does not need the Lagrangian to be nondegenerate.

\subsection{Examples}\label{sec:examplesfunctions}
\subsubsection{Vibrating membrane and nonlinear wave equations}\label{VM}
The Lagrangian of a vibrating membrane (or higher dimensional analogs) with domain $\mathcal{B}$ is given by \eqref{eq:lagrangianfunctions} with the following Lagrangian density,
\begin{equation*}
\mathfrak L(\varphi,\nu,\nabla\varphi)=\frac{1}{2}\rho_0 \nu^2-\frac{1}{2}\tau|\nabla\varphi|^2,\qquad(\varphi,\nu,\nabla\varphi)\in\mathbb R\times\mathbb R\times\mathbb R^m,
\end{equation*}
where $\rho_0\in\mathbb R^+$ is the \emph{density} and $\tau\in\mathbb R^+$ is the \emph{tension}. Since the partial derivatives of $\mathfrak L$ are given by
\begin{equation*}
\frac{\partial\mathfrak L}{\partial\varphi}(\varphi,\nu,\nabla\varphi)=0,\qquad\frac{\partial\mathfrak L}{\partial\nu}(\varphi,\nu,\nabla\varphi)=\rho_0\nu,\qquad\frac{\partial\mathfrak L}{\partial\nabla\varphi}(\varphi,\nu,\nabla\varphi)=-\tau\nabla\varphi,
\end{equation*}
for each $(\varphi,\nu,\nabla\varphi)\in\mathbb R\times\mathbb R\times\mathbb R^m$, then the equations of motion for the Lagrange--Dirac dynamical system given in Proposition \ref{prop:lagrangediracfunctions} for a curve $(\varphi,\nu,\alpha,\alpha_\partial):[t_0,t_1]\to TV\oplus T^\star V$ read
\begin{equation*}
\left\{\begin{array}{ll}
\displaystyle\dot\varphi=\nu, & \vspace{0.1cm}\\
\displaystyle\alpha=\rho_0\nu,\qquad & \dot\alpha=\operatorname{div}(\tau\nabla\varphi)+ \mathcal{F} ( \varphi , \nu ) ,\vspace{0.1cm}\\
\alpha_\partial=0, & \dot\alpha_\partial=\left.-\tau\nabla\varphi\right|_{\partial\mathcal B}\cdot n + \mathcal{F} _ \partial ( \varphi , \nu  ),
\end{array}\right.
\end{equation*}
where we have considered the body and boundary forces $ \mathcal{F} $ and $ \mathcal{F} _ \partial $.
Observe that we obtain the \emph{forced wave equation}, $ \rho  _0\ddot\varphi=\tau\nabla^2\varphi + \mathcal{F} ( \varphi , \dot  \varphi )$, where $\nabla^2$ denotes the Laplacian, together with the \emph{nonhomogeneous} Neumann boundary condition, $ \tau \nabla\varphi|_{\partial\mathcal B}\cdot n= \mathcal{F} _ \partial ( \varphi , \nu )$.

\medskip

In the case where the density and tension depend on $x$, we are in the more general situation of a Lagrangian density depending also explicitly on $x$, see \eqref{L_x}. Our approach extends easily to this case, giving $ \rho  _0\ddot\varphi= \operatorname{div}( \tau\nabla\varphi) + \mathcal{F} ( \varphi , \dot  \varphi )$.

\medskip 

Note that we are using a Dirac structure on the phase space $T^\star V$ induced by the canonical symplectic form $ \Omega _{T^\star V}$, following the standard Hamiltonian formulation of the wave equations, e.g., \cite{ChMa1974}. This differs from the approach taken in the port-Hamiltonian literature, where this type of equation is treated by using energy variables appropriately augmented to include port variables. For instance, in the context of the wave equation, the port-Hamiltonian formulation is based on a constant Dirac structure $D \subset \mathcal{F} \oplus \mathcal{E}$ with $ \mathcal{F} = \mathcal{E} = C^\infty( \mathcal{B} , \mathbb{R} ^n ) \times C^\infty( \mathcal{B} , \mathbb{R} ) \times C^\infty ( \mathcal{B}, \mathbb{R}) \times C^\infty( \partial \mathcal{B}, \mathbb{R})$, see, e.g., \cite{HaMaSe2022}. In this approach, one needs to formulate the dynamics in terms of the energy variables $ \beta  \in  C^\infty( \mathcal{B} , \mathbb{R} ^n )$ and $ \alpha \in C^\infty( \mathcal{B}, \mathbb{R})$, subsequently chosen as $ \beta = \nabla \varphi $ and $ \alpha = \rho   _0 \nu $, in terms of which the Hamiltonian density must be expressible. This makes the approach harder to extend to general Lagrangian densities, such as those of nonlinear wave equations. 
Our approach, based on geometric state variables, is however directly applicable to these cases as well as to general expressions of the Lagrangian density $\mathfrak{L}( \varphi , \nu , \nabla \varphi )$. This is relevant for the treatment of nonlinear wave equations, such as those associated with Lagrangian densities of the form
\[
\mathfrak{L}( \varphi , \nu , \nabla \varphi )=\frac{1}{2}\rho_0 \nu^2-\frac{1}{2}\tau|\nabla\varphi|^2 -U ( \varphi ), 
\]
with $U( \varphi)$ representing potential energy. Typical expressions for $U$ are given as
\[
U( \varphi )= \frac{1}{2} m ^2\varphi ^2 + \frac{1}{p+1} \lambda \varphi ^{p+1}, \qquad p\geq 2,
\]
appearing, for example, in Klein-Gordon theories or
\[
U( \varphi)= 1-\cos \varphi,
\]
for the sine-Gordon case.
The Lagrange--Dirac dynamical system gives
\begin{equation*}
\left\{\begin{array}{ll}
\displaystyle\dot\varphi=\nu, & \vspace{0.1cm}\\
\displaystyle\alpha=\rho_0\nu,\qquad & \dot\alpha=-U'(\varphi)+ \operatorname{div}(\tau\nabla\varphi)+ \mathcal{F} ( \varphi , \nu ) ,\vspace{0.1cm}\\
\alpha_\partial=0, & \dot\alpha_\partial=\left.-\tau\nabla\varphi\right|_{\partial\mathcal B}\cdot n + \mathcal{F} _ \partial ( \varphi , \nu  ),
\end{array}\right.
\end{equation*}
with $U'(\varphi)= m ^2\varphi + \lambda \varphi ^p$ or $U'(\varphi) = \sin \varphi$.
This yields the nonlinear wave equation $\rho  _0\ddot\varphi= \operatorname{div}( \tau\nabla\varphi) -U'(\varphi)+ \mathcal{F} ( \varphi , \dot  \varphi )$ with boundary equation $ \tau \nabla\varphi|_{\partial\mathcal B}\cdot n= \mathcal{F} _ \partial ( \varphi , \nu )$.

\medskip

Our approach distinguishes itself from the port-Hamiltonian literature not only by employing geometric variables but also by defining the Dirac structure directly on the canonical phase space 
$T^\star V$ rather than on an extended port space. This extends the canonical symplectic formulation of mechanics on phase space, while the choice of the dual space $V^\star$ is playing a key role in the infinite-dimensional setting for the treatment of boundary flow. Importantly, this approach does not preclude the possibility of interconnecting Lagrange--Dirac systems via their ports in both finite and infinite-dimensional settings. Such interconnections can be handled within either the variational or Dirac framework. 
Given an interconnection constraint between two systems with configuration spaces $V_1$, $V_2$, there is a systematic geometric construction that yields the Dirac structure of the interconnected system on $T^\star (V_1 \times V_2)$. This is derived from the Dirac structures on $T^\star V_i$ (for $i=1,2$) of each subsystem, and from an interconnection condition defined by a distribution $\Sigma\subset T(V_1 \times V_2)$.
We refer to \cite{JaYo2014,GBYo2023} for the finite dimensional case and to \cite{GBYo2024} for preliminary results in the infinite-dimensional case. We also refer to Remark \ref{rmk_SDS} for further comments regarding the port-Hamiltonian approach.

\subsubsection{Telegraph equation}\label{telegraph}
As another illustrative example of a system with the external boundary term, consider a uniform lossless one-dimensional transmission line. The configuration space is given by $\Omega^{1}(\mathcal B)$ with  $\mathcal B=[0,1] \subset \mathbb{R}$. However, since $\mathcal B$ has a global coordinate, $x$, by denoting $\varphi=\varphi(x)\,{dx} \in \Omega^{1}(\mathcal B)$ (\emph{charge density} one-form) and $\nu=\nu(x)\,{dx} \in T_{\varphi}\Omega^{1}(\mathcal B)=\Omega^1(\mathcal B)$ (\emph{current density} one-form), we may choose $V=C^\infty(\mathcal B)$ as the configuration space. This way, the Lagrangian for this system is given by \eqref{eq:lagrangianfunctions} with
\[
\mathfrak{L}(\varphi,\nu,\varphi')=\frac{1}{2}\ell\nu^{2}-\frac{1}{2c}\left(\varphi^{\prime}\right)^{2},\qquad\left(\varphi,\nu,\varphi'\right)\in\mathbb R\times\mathbb R\times\mathbb R,
\]
where $\varphi'$ denotes the spatial derivative, and $\ell$ and $c$ indicate the \emph{constant of the distributed inductance} and the \emph{capacitance}, respectively. We suppose here that the force depends explicitly on time as $F(t, \varphi , \nu )$. Since the restricted dual is $ \partial \mathcal{B} = \{0,1\}$, we have $V^\star = C^\infty( \mathcal{B} ) \times C^\infty( \partial \mathcal{B} )= C^\infty( \mathcal{B} ) \times \mathbb{R} ^2 $, so that the force reads $F(t,\varphi,\nu)=\left(\varphi, \mathcal F(t,\varphi,\nu),\mathcal F_\partial(t,\varphi,\nu)\right)$ with $ \mathcal{F} _{ \partial }( t,\varphi , \nu ) \in \mathbb{R} ^2 $.
Suppose that the \emph{external  voltage}, is given by
\[ 
\mathcal F(t,\varphi,\nu)=0, \quad \mathcal F_\partial(t,\varphi,\nu)=(0, \mathcal{V}(t) ), 
\]
where $\mathcal V\in C^\infty([t_0,t_1])$.

In this case, the Lagrange--Dirac dynamical system given in Proposition \ref{prop:lagrangediracfunctions} becomes
\begin{equation*}
\left\{\begin{array}{ll}
\displaystyle\dot\varphi=\nu, & \vspace{0.1cm}\\[1mm]
\displaystyle\alpha=\ell\nu,\qquad & \displaystyle\dot\alpha=\frac{1}{c} \varphi^{\prime\prime},\vspace{0.1cm}\\[1mm]
\alpha_\partial=0, & \displaystyle\dot\alpha_\partial=\left.-\frac{1}{c} \varphi^{\prime}\right|_{\partial\mathcal B}+\mathcal F_\partial,
\end{array}\right.
\end{equation*}
which yields the \emph{telegraph equation}, $\ell \,c\, \ddot{\varphi}=\varphi^{\prime\prime}$, together with the boundary conditions $\varphi'(t,0)=0$ and $\varphi^{\prime}(t,1)=c \mathcal{V} (t)$.

\if0

\subsubsection{Lamb system}

The Lamb model consists of an oscillator coupled to a string (see \cite{La1900,OlSh2019} for the original system, and \cite{HaBlWe2003,BlHaRoWe2004} for the analysis of more general systems). In order to deal with the coupling in the Dirac setting, we make use of the interconnection theory developed in \cite{JaYo2011,JaYo2014}. To begin with, each primitive subsystem is treated separately:

\begin{enumerate}
    \item \textbf{Harmonic oscillator}. The configuration manifold is $V_1=\mathbb R$ and the Lagrangian reads
    \begin{equation*}
    L_1:TV_1\to\mathbb R,\quad(q,\dot q)\mapsto\frac{1}{2}m\dot q^2-\frac{1}{2}\kappa(q-q_0)^2,
    \end{equation*}
    where $m\in\mathbb R^+$ is the \emph{mass}, $\kappa\in\mathbb R^+$ is the \emph{oscillator constant} and $q_0\in\mathbb R$ is the \emph{rest position}. Since $V_1$ is finite-dimensional, we make use of Theorem \ref{eq:LDfinitedim}. In this case, $\Delta_{V_1}=TV_1=\mathbb R\times\mathbb R$ and we choose the canonical connection on $TV_1$. Therefore, the Lagrange--Dirac equations for a curve $(q,v,p):[t_0,t_1]\to TV_1\oplus T^*V_1$ read 
    \begin{equation*}
    p=m\dot q,\qquad\dot q=v,\qquad\dot p=-\kappa(q-q_0).
    \end{equation*}
    \item \textbf{String}. The configuration manifold is $V_2=C^\infty([0,1])$ and the Lagrangian reads
    \begin{equation*}
    L_2:TV_2\to\mathbb R,\quad(\varphi,\dot\varphi)\mapsto\int_0^1\mathfrak L(\varphi,\dot\varphi,\varphi')\,{\rm d} x,
    \end{equation*}
    where $x \in [0,1]$ and $\varphi'=d\varphi/dx$ and the Lagrangian density is given by
    \begin{equation*}
    \mathfrak L:\mathbb R\times\mathbb R\times\mathbb R\to\mathbb R,\quad (\varphi,\dot\varphi,\varphi')\mapsto\frac{1}{2}\rho_0\dot\varphi^2-\frac{1}{2}\tau(\varphi')^2,
    \end{equation*}
    which depends on the \emph{linear density}, $\rho_0\in\mathbb R^+$, and on the \emph{tension}, $\tau\in\mathbb R^+$. Observe that, in this case, $\mathcal B=[0,1]$, whence $\partial\mathcal B=\{0,1\}$, and 
    \begin{equation*}
    V_2^\star=C^\infty([0,1])\times C^\infty(\{0,1\})\simeq C^\infty([0,1])\times \mathbb{R} ^2 \subset V_2'    
    \end{equation*}
    with the duality pairing given by
    \begin{equation*}
    \left\langle(\alpha,\alpha_\partial),\dot\varphi\right\rangle=\int_0^1\alpha\dot\varphi\,{\rm d} x+\alpha_\partial(1)\dot\varphi(1)-\alpha_\partial(0)\dot\varphi(0),
    \end{equation*}
    for each $(\alpha,\alpha_\partial)\in V_2^\star$ and $\dot\varphi\in V_2$. Here we notice that $C^\infty(\{0,1\}) \simeq \mathbb{R} ^2$; see Remark \ref{remark:weakvsstrong}.
The Lagrange--Dirac equations for a curve $(\varphi,\nu,\alpha,\alpha_\partial):[t_0,t_1]\to TV_2\oplus T^\star V_2$, as given in Proposition \ref{prop:lagrangediracfunctions}, read
    \begin{equation*}
    \left\{\begin{array}{ll}
    \dot\varphi=\nu, & \\
    \alpha=\rho_0\dot\varphi,\qquad & \dot\alpha=\tau\varphi'', \\
    \alpha_\partial=0 & \dot\alpha_\partial(1)=-\tau\varphi'(1),\quad\dot\alpha_\partial(0)=\tau\varphi'(0),
    \end{array}\right.
    \end{equation*}
    where $\varphi''=d^2\varphi/dx^2$ (note that $\nabla=\operatorname{div}=d/dx$ in this case). As expected for a string with free boundary, we obtain the Neumann boundary conditions, $\varphi'(0)=\varphi'(1)=0$.
\end{enumerate}

Now we consider the coupled system, i.e., the Lamb system. The configuration manifold and the Lagrangian are given by
\begin{equation*}
V=V_1\times V_2=\mathbb R\times C^\infty([0,1]),\qquad L(q,\varphi,\dot q,\dot\varphi)=L_1(q,\dot q)+L_2(\varphi,\dot\varphi),
\end{equation*}
respectively. In particular, $V^\star=V_1^\star\times V_2^\star=\mathbb R\times C^\infty([0,1])\times C^\infty(\{0,1\})$. Moreover, we assume that the oscillator is physically coupled to the string at $x=0$, i.e., $q=\varphi(0)$. This yields the following interaction distribution,
\begin{equation*}
\Sigma_V(q,\varphi)=\left\{(\dot q,\dot\varphi)\in\mathbb R\times C^\infty([0,1])\mid\dot q=\dot\varphi(0)\right\},\qquad(q,\varphi)\in V.
\end{equation*}
Its annihilator on $V^\star$ is given by
\begin{equation*}
\Sigma_V^\circ(q,\varphi)=\left\{(p,\alpha,\alpha_\partial)\in V^\star\mid p=\alpha_\partial(0),~\alpha=0,~\alpha_\partial(1)=0\right\}.
\end{equation*}
Therefore, the interconnected Lagrange--Dirac equations (cf. \cite[Proposition 7]{JaYo2014}) read 
\begin{equation*}
\left\{\begin{array}{l}
(\dot q,\dot\varphi)=(v,\nu)\in\Sigma_V(q,\varphi)\vspace{0.2cm}\\
\displaystyle(p,\alpha,\alpha_\partial)=\left(\frac{\partial L_1}{\partial\dot q},\frac{\partial\mathfrak L}{\partial\nu},0,0\right),\vspace{0.2cm}\\
\displaystyle(\dot p,\dot\alpha,\dot\alpha_\partial)-\left(\frac{\partial L_1}{\partial q},\frac{\partial\mathfrak L}{\partial\varphi}-\frac{d}{dx}\frac{\partial\mathfrak L}{\partial\varphi'},-\frac{\partial\mathfrak L}{\partial\varphi'}(0),\frac{\partial\mathfrak L}{\partial\varphi'}(1)\right)\in\Sigma_V^\circ(q,\varphi),
\end{array}\right.
\end{equation*}
where we have used that
\begin{equation*}
\iota_{\{0,1\}}^*\left(\frac{\partial\mathfrak L}{\partial\varphi'}\right)=\left(-\frac{\partial\mathfrak L}{\partial\varphi'}(0),\frac{\partial\mathfrak L}{\partial\varphi'}(1)\right)\in C^\infty(\{0,1\})\simeq\mathbb R^2.
\end{equation*}
By using the expression for the interaction distribution and its annihilator, we get
\begin{equation*}
\left\{\begin{array}{lll}
\dot q=v, & \dot\varphi=\nu, & \dot q=\dot\varphi(0),\vspace{0.2cm}\\
\displaystyle p=\frac{\partial L_1}{\partial \dot  q}, & \displaystyle\alpha=\frac{\partial \mathfrak{L}}{\partial\nu}, & \alpha_\partial=0,\vspace{0.2cm}\\
\displaystyle\dot p - \frac{\partial L_1}{\partial q} =\dot\alpha_\partial(0) + \frac{\partial \mathfrak{L}}{\partial \varphi '}(0),\quad & \displaystyle\dot\alpha= \frac{\partial \mathfrak{L}}{\partial \varphi } - \frac{d}{dx} \frac{\partial \mathfrak{L}}{\partial \varphi '},\quad & \displaystyle\dot\alpha_\partial(1)=\frac{\partial \mathfrak{L}}{\partial \varphi '}(1).
\end{array}\right.
\end{equation*}
To conclude, we take the explicit expression of the Lagrangian into account thus obtaining the Lagrange--Dirac equations for the Lamb system:
\begin{equation*}
\left\{\begin{array}{lll}
\dot q=v, & \dot\varphi=\nu, & \dot q=\dot\varphi(0),\vspace{0.2cm}\\
\displaystyle p=m\dot q, & \displaystyle\alpha=\rho_0\dot\varphi, & \alpha_\partial=0,\vspace{0.2cm}\\
\displaystyle\dot p +\kappa(q-q_0)=\dot\alpha_\partial(0)-\tau\varphi'(0),\quad & \displaystyle\dot\alpha=\tau\varphi'',\qquad & \displaystyle\dot\alpha_\partial(1)=-\tau\varphi'(1).
\end{array}\right.
\end{equation*}
\fi

\section{Lagrange--Dirac dynamical systems on the space of $k$-forms}\label{sec:kforms}

The aim of this section is to extend the theory of infinite-dimensional Lagrange--Dirac dynamical systems on $V=C^{\infty}(\mathcal B)$ to the case in which the configuration space is given by the space of differential $k$-forms
\begin{equation*}
V=\Omega^k(M),
\end{equation*}
where $M$ is an $m$-dimensional, compact smooth manifold with boundary and $0\leq k\leq m$. As in \S\ref{sec:functions}, we endow $V$ with the Fr\'echet space structure.

\subsection{Restricted dual and restricted cotangent bundle}\label{sec:restricteddualkforms}

For the treatment of systems with boundary flow on $\Omega^k(M)$, it is convenient to introduce, as in \S \ref{sec:restricteddualfunctions}, the \emph{restricted dual} of $V=\Omega^k(M)$, which is a vector subspace of the topological dual, $V'$, by means of a duality pairing given through integration on the manifold and its boundary. We denote by $\iota_{\partial M}: \partial M \rightarrow M$ the inclusion of the boundary.

\begin{definition}\rm\label{DualParing_FamilykForms}
The \emph{restricted dual space} of $V=\Omega^k(M)$ is defined as $V^\star=\Omega ^{m-k}(M) \times \Omega ^{m-k-1}( \partial M)$. The corresponding duality pairing is given by
\begin{equation}\label{duality_pairing_kforms}
\left\langle(\alpha,\alpha_\partial),\varphi\right\rangle=\int_{ M}\varphi\wedge\alpha+\int_{\partial M}\iota_{\partial M}^*\varphi\wedge\alpha_\partial,\qquad(\alpha,\alpha_\partial)\in V^\star,~\varphi\in V.
\end{equation}
\end{definition}

As in \eqref{eq:identificationrestricted}, this definition gives a continuous injection of $V^\star$ into the topological dual:
\begin{equation}\label{Psi} 
\Psi:V^\star \rightarrow V'.
\end{equation}

Of course, the pairing is weakly non-degenerate and, thus, the restricted cotangent bundle and the restricted iterated bundles may be defined in the same vein as in \S \ref{sec:restricteddualfunctions}. Briefly, the \emph{restricted cotangent bundle} is given by
\begin{equation*}
T^\star V=V\times V^\star=\Omega^k(M)\times\Omega^{m-k}(M)\times\Omega^{m-k-1}(\partial M),
\end{equation*}
and it is a subbundle of $T'V$ through the identification given by the corresponding pairing.
Similarly, the \emph{restricted iterated bundles} are given by
\begin{equation*}
\begin{array}{l}
T^\star(TV)=V\times V\times V^\star\times V^\star,\vspace{0.1cm}\\
T(T^\star V)=V\times V^\star\times V\times V^\star,\vspace{0.1cm}\\
T^\star(T^\star V)=V\times V^\star\times V^\star\times V.
\end{array}
\end{equation*}
To conclude, the \emph{restricted Pontryagin bundle} is given, as before, by
\begin{equation*}
T(T^\star V)\oplus T^\star(T^\star V)=V\times V^\star\times \left((V\times V^\star) \oplus (V^\star\times V)\right)\subset T(T^\star V)\oplus T'(T^\star V).
\end{equation*}

\subsection{Canonical forms, Tulczyjew triples and canonical Dirac structures}\label{sec:tulczyjewkforms}

The canonical symplectic forms and the Tulczyjew triple, together with the canonical Dirac structures, are introduced as in \S\ref{sec:tulczyjewfunctions}.

\begin{definition}\rm
The \emph{canonical one-form} on $T^\star V=\Omega^{m-k}(M) \times \Omega ^{m-k-1}( \partial M)$ is defined as
\[
\Theta_{T^\star V}(z) \cdot \delta{z}=\big< z, T_{z}\pi_{V}(\delta z)\big>,\qquad z \in T^\star V,~\delta z\in T_{z}(T^\star V),
\]
where $\pi_{V}: T^\star V \to V$ and the duality paring is given in \eqref{duality_pairing_kforms}. Furthermore, the \emph{canonical two-form} on $T^\star V$ is defined as $\Omega_{T^\star V}=-{\rm d}\Theta_{T^\star V}\in\Omega^2(T^\star \Omega^k(M))$. 
\end{definition}

More explicitly, we can write the canonical one-form as 
\begin{equation*}
\Theta_{T^\star V}(\varphi,\alpha,\alpha_\partial)\cdot(\delta\varphi,\delta\alpha,\delta\alpha_{\partial})=\big< (\varphi, \alpha,\alpha_\partial),\delta\varphi\big>=\int_{ M}\delta\varphi\wedge\alpha+\int_{\partial M}\iota_{\partial M}^*\delta\varphi\wedge\alpha_\partial,
\end{equation*}
for each $z=(\varphi,\alpha,\alpha_{\partial})\in T^\star V$, $\delta{z}=(\delta\varphi,\delta\alpha,\delta\alpha_{\partial}) \in T_{z}\left(T^\star V\right) \simeq V\times V^\star$. The canonical two-form is then found as
\begin{equation}\label{Omega_star}
\begin{aligned}
&\Omega_{T^\star V}(\varphi,\alpha,\alpha_\partial)((\dot\varphi,\dot\alpha,\dot\alpha_\partial),(\delta\varphi,\delta\alpha,\delta\alpha_\partial))\phantom{\int_M}\\
&=\big\langle(\delta\alpha,\delta\alpha_\partial),\dot\varphi\big\rangle-\big\langle(\dot\alpha,\dot\alpha_\partial),\delta\varphi\big\rangle\\
&=\int_M   \dot \varphi \wedge \delta\alpha- \delta\varphi\wedge \dot\alpha +\int_{\partial M} \iota_{\partial M}^*\dot \varphi \wedge \delta\alpha_\partial   - \iota_{\partial M}^*\delta\varphi\wedge \dot\alpha_\partial ,
\end{aligned}
\end{equation}
for each $(\varphi,\alpha,\alpha_\partial)\in T^\star V$ and $(\dot\varphi,\dot\alpha,\dot\alpha_\partial),(\delta\varphi,\delta\alpha,\delta\alpha_\partial)\in T_{(\varphi,\alpha,\alpha_\partial)}\left(T^\star V\right)\simeq V\times V^\star$. 

\begin{proposition}\rm
The flat map of the canonical symplectic form defines a vector bundle morphism over the identity; namely, under the identification given by the dual pairing $\langle\cdot,\cdot\rangle$, it reads
\begin{equation}\label{Omega_flat_kforms}
\Omega_{T^\star V}^\flat: T(T^\star V)\to T^\star(T^\star V),\quad(\varphi,\alpha,\alpha_\partial,\dot\varphi,\dot\alpha,\dot\alpha_\partial)\mapsto(\varphi,\alpha,\alpha_\partial, -\dot\alpha,-\dot\alpha_\partial,\dot\varphi).
\end{equation}
\end{proposition}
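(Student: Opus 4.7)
The plan is to mimic the argument used for Proposition 3.2 in the $C^\infty(\mathcal B)$ case, the key simplification being that the explicit formula (3.7) for $\Omega_{T^\star V}$ is already packaged in terms of the duality pairing \eqref{duality_pairing_kforms}, so the identification of $\Omega_{T^\star V}^\flat$ reduces to reading off components and invoking weak non-degeneracy. Concretely, I would fix $(\varphi,\alpha,\alpha_\partial)\in T^\star V$ together with tangent vectors $(\dot\varphi,\dot\alpha,\dot\alpha_\partial),(\delta\varphi,\delta\alpha,\delta\alpha_\partial)\in T_{(\varphi,\alpha,\alpha_\partial)}(T^\star V)\simeq V\times V^\star$, and use the defining identity of the flat map together with (3.7) to write
\begin{equation*}
\big\langle \Omega_{T^\star V}^\flat(\varphi,\alpha,\alpha_\partial)(\dot\varphi,\dot\alpha,\dot\alpha_\partial),(\delta\varphi,\delta\alpha,\delta\alpha_\partial)\big\rangle=\big\langle(\delta\alpha,\delta\alpha_\partial),\dot\varphi\big\rangle-\big\langle(\dot\alpha,\dot\alpha_\partial),\delta\varphi\big\rangle.
\end{equation*}

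Next, I would interpret the two terms on the right as the pairing of $(\delta\varphi,\delta\alpha,\delta\alpha_\partial)$ with a candidate covector having components in $V^\star\times V$. The contribution $-\langle(\dot\alpha,\dot\alpha_\partial),\delta\varphi\rangle$ pairs $\delta\varphi\in V$ with the element $(-\dot\alpha,-\dot\alpha_\partial)\in V^\star\subset V'$ via $\Psi$ from \eqref{Psi}, while the contribution $\langle(\delta\alpha,\delta\alpha_\partial),\dot\varphi\rangle$ pairs $(\delta\alpha,\delta\alpha_\partial)\in V^\star$ with $\dot\varphi\in V$ regarded as an element of $(V^\star)'$ through the analogue of the identification \eqref{eq:idenficationdualrestricted} for $k$-forms, namely $V\ni\dot\varphi\mapsto F_{\dot\varphi}\in(V^\star)'$ with $F_{\dot\varphi}(\delta\alpha,\delta\alpha_\partial)=\langle(\delta\alpha,\delta\alpha_\partial),\dot\varphi\rangle$. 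Since $(\delta\varphi,\delta\alpha,\delta\alpha_\partial)$ is arbitrary and both pairings are weakly non-degenerate, the components of $\Omega_{T^\star V}^\flat(\varphi,\alpha,\alpha_\partial)(\dot\varphi,\dot\alpha,\dot\alpha_\partial)$ in $V^\star\times V$ are uniquely determined and are precisely $(-\dot\alpha,-\dot\alpha_\partial,\dot\varphi)$, yielding \eqref{Omega_flat_kforms}. Continuity of $\Omega_{T^\star V}^\flat$ (and of its inverse, obtained by the obvious relabeling) is immediate from the explicit formula, so the map is a vector bundle morphism over $\id_{T^\star V}$ whose image lies in $T^\star(T^\star V)\subset T'(T^\star V)$.

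The only non-routine step is justifying the two weak non-degeneracy statements underlying the component identification, i.e., that the pairing \eqref{duality_pairing_kforms} defines a dual system both in the form $(V,V^\star)$ and in the form $(V^\star,V)$. For the first, I would verify the analogue of Proposition 3.1: if $\int_M\varphi\wedge\alpha+\int_{\partial M}\iota_{\partial M}^*\varphi\wedge\alpha_\partial=0$ for all $\varphi\in\Omega^k(M)$, then choosing $\varphi$ compactly supported in the interior of $M$ forces $\alpha=0$ by the Fundamental Lemma of the Calculus of Variations applied in local charts with the Hodge/volume duality between $k$- and $(m-k)$-forms, and subsequently choosing $\varphi$ with prescribed boundary trace yields $\alpha_\partial=0$. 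The reverse direction, non-degeneracy in $\varphi$, is proved by the same local construction. This is precisely the foundational ingredient that was singled out in Proposition 3.1 for the function case, and it is the only place where the extension from $C^\infty(\mathcal B)$ to $\Omega^k(M)$ requires genuine work; once it is in hand, the identification of $\Omega_{T^\star V}^\flat$ is mechanical.
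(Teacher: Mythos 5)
Your proposal is correct and follows essentially the same route as the paper, which in fact omits the proof of this proposition and implicitly relies on the argument given for the function-space analogue (Proposition \ref{prop:Omegaflatfunctions}): read off the two terms of $\Omega_{T^\star V}$ as pairings against $(\delta\varphi,\delta\alpha,\delta\alpha_\partial)$, identify the components through the injections $V^\star\hookrightarrow V'$ and $V\hookrightarrow(V^\star)'$, and note continuity of the map and of its inverse. Your explicit verification that the pairing \eqref{duality_pairing_kforms} is weakly non-degenerate in both arguments is a point the paper simply asserts (``Of course, the pairing is weakly non-degenerate''), so spelling it out is a harmless and correct supplement.
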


Naturally, Remark \ref{remark:weakvsstrong} keeps holding in this more general context. In particular $\Omega_{T^\star V}$ becomes a strong symplectic form, when we confine ourselves to restricted duals since \eqref{Omega_flat_kforms} is an isomorphism. 

\begin{definition}\label{def:Omegaflatkforms}\rm
Consider the isomorphism over the identity, $\operatorname{id}_V$, defined as
\begin{equation*}
\kappa_{T^\star V}:T(T^\star V)\to T^\star(TV),\quad(\varphi,\alpha,\alpha_\partial, \dot\varphi,\dot\alpha,\dot\alpha_\partial)\mapsto(\varphi,\dot\varphi, \dot\alpha,\dot\alpha_\partial,\alpha,\alpha_\partial).
\end{equation*}
The \emph{restricted Tulczyjew triple} on the space of $k$-forms is the structure of three isomorphisms between the restricted iterated bundles given by
\begin{equation*}
	\begin{tikzpicture}
			\matrix (m) [matrix of math nodes,row sep=0.1em,column sep=6em,minimum width=2em]
			{	T^\star(T\Omega^k(M)) & T(T^\star\Omega^k(M)) & T^\star(T^\star\Omega^k(M))\\
			\left(\varphi,\dot\varphi, \dot\alpha,\dot\alpha_\partial,\alpha,\alpha_\partial\right) & \left(\varphi,\alpha,\alpha_\partial, \dot\varphi,\dot\alpha,\dot\alpha_\partial\right) & \left(\varphi,\alpha,\alpha_\partial, -\dot\alpha,-\dot\alpha_\partial,\dot\varphi\right).\\};
			\path[-stealth]
			(m-1-1) edge [bend left = 25] node [above] {$\gamma_{T^\star\Omega^k(M)}=\Omega_{T^\star V}^\flat\circ\kappa_{T^\star V}^{-1}$} (m-1-3)
			(m-1-2) edge [] node [above] {$\kappa_{T^\star\Omega^k(M)}$} (m-1-1)
			(m-1-2) edge [] node [above] {$\Omega_{T^\star\Omega^k(M)}^\flat$} (m-1-3)
			(m-1-3)
			(m-2-2) edge [|->] node [] {} (m-2-3)
			(m-2-2) edge [|->] node [] {} (m-2-1);
	\end{tikzpicture}
\end{equation*}
\end{definition}

Next, we introduce the canonical Dirac structure.

\begin{definition}\rm\label{def:diracstructurekforms}
The \emph{canonical Dirac structure} on $T^\star V= \Omega^k(M)\times\Omega^{m-k}(M)\times\Omega^{m-k-1}(\partial M)$ is the subbundle $D_{T^\star V}=\operatorname{graph}\Omega_{T^\star V}^\flat$. For each $(\varphi,\alpha,\alpha_\partial)\in T^\star V$, it reads
\begin{equation}\label{canonical_D_star}
\begin{split}
D_{T^\star V}(\varphi,\alpha,\alpha_\partial)&=\big\{(\dot\varphi,\dot\alpha,\dot\alpha_\partial, \delta\alpha,\delta\alpha_\partial,\delta\varphi)\in T_{(\varphi,\alpha,\alpha_\partial)}(T^\star V)\oplus T_{(\varphi,\alpha,\alpha_\partial)}^\star(T^\star V)\mid\\
&\hspace{5cm}-\dot\alpha=\delta\alpha,~-\dot\alpha_\partial=\delta\alpha_\partial,~\dot\varphi=\delta\varphi\big\}.
\end{split}
\end{equation}
\end{definition}

\subsection{Partial derivatives of the Lagrangian density}\label{sec:partialderivativeskforms}

For systems with configuration space $V=\Omega^k(M)$, we shall consider Lagrangians $L$ of 
the form
\begin{equation}\label{eq:lagrangiankforms}
L:TV\to\mathbb R,\quad L(\varphi,\nu)=\int_{ M}\mathscr L(\varphi,\nu,{\rm d}\varphi),
\end{equation}
for a Lagrangian density
\begin{equation}\label{eq:lagrangiandensitykforms}
\mathscr L:\textstyle\bigwedge^k T^* M\times_M\bigwedge^kT^* M\times_M\bigwedge^{k+1} T^* M\to\bigwedge^m T^* M
\end{equation}
given as a vector bundle morphism over the identity, $\operatorname{id}_{ M}$. The partial derivatives of $\mathscr L$,
\begin{align*}
& \frac{\partial\mathscr L}{\partial\varphi}:\textstyle\bigwedge^k T^* M\times_M\bigwedge^kT^* M\times_M\bigwedge^{k+1} T^* M\to\bigwedge^{m-k} T^* M,\vspace{0.1cm}\nonumber\\
& \frac{\partial\mathscr L}{\partial\nu }:\textstyle\bigwedge^k T^* M\times_M\bigwedge^kT^* M\times_M\bigwedge^{k+1} T^* M\to\bigwedge^{m-k} T^* M,\vspace{0.1cm}\\
&\frac{\partial\mathscr L}{\partial\zeta}:\textstyle\bigwedge^k T^* M\times_M\bigwedge^kT^* M\times_M\bigwedge^{k+1} T^* M\to\bigwedge^{m-k-1} T^* M\nonumber
\end{align*}
are defined in the usual way. Namely, for each $x\in M$ and $(\varphi_x,\nu _x,\zeta_x)\in\bigwedge^k T_x^*M\times\bigwedge^k T_x^*M\times\bigwedge^{k+1}T_x^*M$, by
\begin{equation}\label{PD_2} \begin{aligned}
\delta\varphi_x\wedge\frac{\partial\mathscr L}{\partial\varphi}(\varphi_x,\nu _x,\zeta_x)=\left.\frac{d}{d\epsilon}\right|_{{\epsilon}=0}\mathscr L(\varphi_x+{\epsilon}\,\delta\varphi_x,\nu _x,\zeta_x),\qquad & \delta\varphi_x\in\textstyle\bigwedge^k T_x^*M,\\
\delta{\nu }_x\wedge\frac{\partial\mathscr L}{\partial\nu }(\varphi_x,\nu _x,\zeta_x)=\left.\frac{d}{d\epsilon}\right|_{\epsilon=0}\mathscr L(\varphi_x,\nu _x+{\epsilon}\,\delta\nu _x,\zeta_x),\qquad & \delta\nu _x\in\textstyle\bigwedge^k T_x^*M,\\
\delta{\zeta}_x\wedge\frac{\partial\mathscr L}{\partial\zeta}(\varphi_x,\nu _x,\zeta_x)=\left.\frac{d}{d\epsilon}\right|_{{\epsilon}=0}\mathscr L(\varphi_x,\nu _x,\zeta_x+{\epsilon}\,\delta\zeta_x),\qquad & \delta\zeta_x\in\textstyle\bigwedge^{k+1} T_x^*M.
\end{aligned}
\end{equation}

\subsection{Lagrange--Dirac dynamical systems}\label{sec:LDkforms}

The partial functional derivatives of a general Lagrangian function $L:TV \rightarrow\mathbb{R}$ are the maps
\begin{equation*}
\frac{\delta L}{\delta\varphi},~\frac{\delta L}{\delta\nu }:TV\to V',
\end{equation*}
defined as
\begin{equation*}
\frac{\delta L}{\delta\varphi}(\varphi,\nu)(\delta\varphi)=\left.\frac{d}{d \epsilon}\right|_{\epsilon=0}L(\varphi+\epsilon\,\delta\varphi,\nu),\qquad\frac{\delta L}{\delta\nu }(\varphi,\nu)(\delta\nu)=\left.\frac{d}{d\epsilon}\right|_{\epsilon=0}L(\varphi,\nu+\epsilon\,\delta\nu),
\end{equation*}
for each $(\varphi,\nu)\in TV$ and $\delta\varphi, \delta\nu\in V$.
The next result ensures that the functional derivatives of a Lagrangian of the form \eqref{eq:lagrangiankforms} can be regarded as elements of $V^\star\subset V'$ by means of the corresponding identification.

\begin{lemma}\label{lemma:partialderivativekforms}\rm
Let $L:TV\to\mathbb R$ be a Lagrangian defined through a density, as in \eqref{eq:lagrangiankforms}. Then, for each $(\varphi,\nu )\in TV$, the fiber derivatives of $L$ under the identification \eqref{Psi} read
\begin{eqnarray*}
\frac{\delta L}{\delta\varphi}(\varphi,\nu ) & = & \left(\frac{\partial\mathscr L}{\partial\varphi}(\varphi,\nu ,{\rm d}\varphi)- (-1)^k{\rm d}\frac{\partial\mathscr L}{\partial\zeta}(\varphi,\nu ,{\rm d}\varphi),~\iota_{\partial M}^*\left(\frac{\partial\mathscr L}{\partial\zeta}(\varphi,\nu ,{\rm d}\varphi)\right)\right)\in V^\star,\\
\frac{\delta L}{\delta\nu }(\varphi,\nu )& = & \left(\frac{\partial\mathscr L}{\partial\nu }(\varphi,\nu ,{\rm d}\varphi),~0\right)\in V^\star.
\end{eqnarray*} 
\end{lemma}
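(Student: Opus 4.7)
The plan is to compute each functional derivative directly from the definition, differentiating the integral of $\mathscr L$ with respect to the parameter $\epsilon$, and then to rewrite the result in the form of the duality pairing \eqref{duality_pairing_kforms}; the key tool for the $\varphi$-derivative is integration by parts, i.e., the graded Leibniz rule together with Stokes' theorem, which is what produces the boundary term and thus the nontrivial second component of the element of $V^\star$.

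The $\nu$-derivative is immediate: since $\mathscr L$ does not depend on any derivative of $\nu$, differentiating under the integral sign and invoking \eqref{PD_2} yields
\begin{equation*}
\frac{\delta L}{\delta\nu}(\varphi,\nu)(\delta\nu)=\int_M\delta\nu\wedge\frac{\partial\mathscr L}{\partial\nu}(\varphi,\nu,{\rm d}\varphi),
\end{equation*}
which by \eqref{duality_pairing_kforms} is the pairing of $\delta\nu\in V$ with $\bigl(\partial\mathscr L/\partial\nu,0\bigr)\in V^\star$. For the $\varphi$-derivative, differentiating under the integral and using the chain rule together with \eqref{PD_2} gives
\begin{equation*}
\frac{\delta L}{\delta\varphi}(\varphi,\nu)(\delta\varphi)=\int_M\delta\varphi\wedge\frac{\partial\mathscr L}{\partial\varphi}(\varphi,\nu,{\rm d}\varphi)+\int_M {\rm d}\delta\varphi\wedge\frac{\partial\mathscr L}{\partial\zeta}(\varphi,\nu,{\rm d}\varphi).
\end{equation*}

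I would then handle the last term via the graded Leibniz identity
\begin{equation*}
{\rm d}\left(\delta\varphi\wedge\frac{\partial\mathscr L}{\partial\zeta}\right)={\rm d}\delta\varphi\wedge\frac{\partial\mathscr L}{\partial\zeta}+(-1)^k\delta\varphi\wedge {\rm d}\frac{\partial\mathscr L}{\partial\zeta},
\end{equation*}
isolate ${\rm d}\delta\varphi\wedge\partial\mathscr L/\partial\zeta$, and apply Stokes' theorem on the compact manifold $M$ with boundary to convert the resulting exact term into a boundary integral. Since the pullback by $\iota_{\partial M}$ commutes with the wedge product, this produces $\int_{\partial M}\iota_{\partial M}^{*}\delta\varphi\wedge\iota_{\partial M}^{*}(\partial\mathscr L/\partial\zeta)$, and combining gives exactly the pairing of $\delta\varphi$ with the claimed element of $V^\star$.

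The only non-routine point is bookkeeping the sign $(-1)^k$ coming from the degree $k$ of $\delta\varphi$ in the graded Leibniz rule, and verifying that the boundary contribution fits the form dictated by \eqref{duality_pairing_kforms} (i.e., that the $(m{-}k{-}1)$-form $\iota_{\partial M}^{*}(\partial\mathscr L/\partial\zeta)$ lands in the correct slot of $V^\star=\Omega^{m-k}(M)\times\Omega^{m-k-1}(\partial M)$). No functional-analytic subtleties arise, since $M$ is compact, $\mathscr L$ is smooth as a fibered morphism, and the identification of the resulting element of $V^\star$ with an element of $V'$ is guaranteed by the injection $\Psi$ of \eqref{Psi}, so that the weakly non-degenerate pairing uniquely determines the representatives.
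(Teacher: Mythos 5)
Your proposal is correct and follows essentially the same route as the paper's proof: differentiate under the integral using \eqref{PD_2}, then apply the graded Leibniz rule ${\rm d}(\delta\varphi\wedge\chi)={\rm d}\delta\varphi\wedge\chi+(-1)^k\delta\varphi\wedge{\rm d}\chi$ together with Stokes' theorem and the compatibility of $\iota_{\partial M}^*$ with the wedge product to produce the interior and boundary components of the element of $V^\star$. The sign bookkeeping and the identification via $\Psi$ are handled exactly as in the paper.
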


\begin{proof}
Let $\delta\varphi\in V$. A direct computation yields:
\begin{align*}
\frac{\delta L}{\delta\varphi}(\varphi,\nu )(\delta\varphi) & =\left.\frac{d}{d \epsilon}\right|_{\epsilon=0}\int_{ M}\mathscr L(\varphi+ \epsilon\,\delta\varphi,\nu ,{\rm d}(\varphi+\epsilon\,\delta\varphi))\\
& =\int_{ M}\left(\delta\varphi\wedge\frac{\partial\mathscr L}{\partial\varphi}(\varphi,\nu ,{\rm d}\varphi)+{\rm d}\delta\varphi\wedge\frac{\partial\mathscr L}{\partial\zeta}(\varphi,\nu ,{\rm d}\varphi)\right)\\
& =\int_{ M}\delta\varphi\wedge\left(\frac{\partial\mathscr L}{\partial\varphi}(\varphi,\nu ,{\rm d}\varphi)- (-1)^k{\rm d}\frac{\partial\mathscr L}{\partial\zeta}(\varphi,\nu ,{\rm d}\varphi)\right)\\
& \qquad +\int_{\partial M}\iota_{\partial M}^*\delta\varphi\wedge\iota_{\partial M}^*\left(\frac{\partial\mathscr L}{\partial\zeta}(\varphi,\nu ,{\rm d}\varphi)\right).
\end{align*}
For the last equality, we have used that
\begin{equation}\label{eq:exteriorderivative}
{\rm d}\left(\delta\varphi\wedge\chi\right)={\rm d}\delta\varphi\wedge\chi+(-1)^k\delta\varphi\wedge{\rm d}\chi,
\end{equation}
together with the Stokes theorem and the fact that $\iota_{\partial M}^*(\delta\varphi\wedge\chi)=\iota_{\partial M}^*\delta\varphi\wedge\iota_{\partial M}^*\chi$ for each $\chi\in\Omega^{m-k-1}(M)$. Analogously,
\begin{equation*}
\frac{\delta L}{\delta\nu }(\varphi,\nu )(\delta \nu )=\left.\frac{d}{d \epsilon}\right|_{ \epsilon=0}\int_{ M}\mathscr L(\varphi,\nu + \epsilon\,\delta \nu ,{\rm d}\varphi)=\int_{ M}\delta\nu\wedge\frac{\partial\mathscr L}{\partial\nu }(\varphi,\nu ,{\rm d}\varphi).
\end{equation*}
\end{proof}

The differential of a general Lagrangian function $L:TV \rightarrow\mathbb{R}$ is the map
\begin{equation*}
{\rm d}L:TV\to T'(TV),\quad(\varphi,\nu )\mapsto\left(\varphi,\nu ,\frac{\delta L}{\delta\varphi}(\varphi,\nu ),\frac{\delta L}{\delta\nu }(\varphi,\nu )\right).
\end{equation*}
As a consequence of the previous Lemma, it may be regarded as taking values in $T^\star(TV)$.  As in the previous section, the \emph{Dirac differential} of the Lagrangian \eqref{eq:lagrangiankforms} is defined as
\begin{equation}\label{eq:DiracDifferential_kForms}
{\rm d}_{ D} L=\gamma_{T^\star V}\circ{\rm d} L:TV\to T^\star(T^\star V),\quad(\varphi,\nu)\mapsto\left(\varphi,\frac{\delta L}{\delta\nu}(\varphi,\nu),-\frac{\delta L}{\delta\varphi}(\varphi,\nu),\nu\right).
\end{equation}
In order to introduce body and boundary forces, we consider the \emph{Legendre transform} of $L$. From Lemma \ref{lemma:partialderivativekforms} we obtain:
    \begin{equation*}
    \mathbb{F} L: TV \to T^{\star}V,\quad(\varphi,\nu)\mapsto\left(\varphi, \frac{\partial\mathscr L}{\partial\nu}(\varphi,\nu,{\rm d}\varphi), 0\right).
    \end{equation*}

\begin{definition}\rm\label{Forces_KForms}
Let $F:TV\to T^\star V$ be an \emph{external force with values in $T^\star V$}, and write \begin{equation}\label{Fstar} F(\varphi,\nu)=(\varphi,\mathcal F(\varphi,\nu),\mathcal F_\partial(\varphi,\nu)),
\end{equation}
for each $(\varphi,\nu) \in TV$.  As in Definition \ref{ExtForces}, given a Lagrangian $L$, the associated \textit{Lagrangian force field} is the map $\widetilde{F}: TV \to T^{\star}(T^\star V)$ defined as
\[
\left\langle \widetilde{F}(\varphi,\nu), W\right\rangle = \left\langle F(\varphi,\nu), T_{\mathbb FL(\varphi,\nu)}\pi_{V}(W) \right\rangle,\qquad (\varphi,\nu) \in TV,~W\in T_{\mathbb FL(\varphi,\nu)}(T^\star V),
\]
where $\pi_V:T^{\star}V \to V$ is the natural projection and $T\pi_V:T\left(T^\star V\right) \to TV$ denotes its tangent map. More explicitly, it reads
\begin{equation*}
\widetilde{F}(\varphi,\nu)= \left(\varphi,\frac{\partial\mathscr L}{\partial\nu}(\varphi,\nu,{\rm d}\varphi),0,\mathcal F(\varphi,\nu),\mathcal F_\partial(\varphi,\nu),0\right).
\end{equation*}
\end{definition}

\begin{remark}[Body and boundary forces]\rm Note that the expression of $F$ in \eqref{Fstar} contains a body and a boundary force. According to the identification of the restricted dual, these forces are given in the spaces $\mathcal{F}( \varphi , \nu ) \in \Omega^{m-k}(M)$ and $\mathcal{F} _ \partial ( \varphi , \nu ) \in \Omega^{m-k-1}(\partial M)$, respectively.
\end{remark}

We are ready to introduce Lagrange--Dirac mechanical systems with external body and boundary forces.

\begin{definition}\rm\label{LagDiracSys_Family_kForms}
Consider a Lagrangian $L:TV \rightarrow \mathbb{R}$, $V=\Omega^k(M)$, and an exterior force $F:TV \rightarrow T^\star V$ given in terms of a Lagrangian density $\mathfrak{L}$, and in terms of interior and boundary forces $ \mathcal{F} $, and $ \mathcal{F} _ \partial $ as in \eqref{eq:lagrangiankforms} and \eqref{Fstar}. The associated \emph{Lagrange--Dirac dynamical system with body and boundary forces} for a curve $(\varphi,\nu,\alpha,\alpha_\partial):[t_0,t_1]\to TV\oplus T^\star V$ in the Pontryagin bundle is given by
\begin{equation}\label{InfDimLagDiracSys_Family_kForms_Star}
\big((\varphi,\alpha,\alpha_\partial, \dot\varphi,\dot\alpha,\dot\alpha_\partial), {\mathrm d}_D L(\varphi,\nu)-\widetilde{F}(\varphi,\nu)\big)\in D_{T^\star V} (\varphi,\alpha,\alpha_\partial),
\end{equation}
where $\dot{(\,)}=\partial_{t}(\,)$ denotes the time derivative and $D_{T^\star V}$ is the Dirac structure given in \eqref{canonical_D_star}.
\end{definition}
\begin{proposition}\label{proposition:LDkforms}\rm
A curve $(\varphi,\nu, \alpha,\alpha_\partial): [t_0,t_1] \to TV \oplus T^\star V$ is a solution of \eqref{InfDimLagDiracSys_Family_kForms_Star} if and only if it satisfies the following system of equations:
\begin{equation}\label{LagDiracSystem_Family_kForms_Star}
\left\{\begin{array}{ll}
\dot\varphi=\nu, & \vspace{0.2cm}\\
\displaystyle\alpha=\frac{\partial\mathscr L}{\partial\nu}(\varphi,\nu,{\rm d}\varphi),\qquad & \displaystyle\dot\alpha=\frac{\partial\mathscr L}{\partial\varphi}(\varphi,\nu,{\rm d}\varphi)- (-1)^k{\rm d}\frac{\partial\mathscr L}{\partial\zeta}(\varphi,\nu,{\rm d}\varphi)+\mathcal F(\varphi,\nu),\vspace{0.2cm}\\
\alpha_\partial=0, & \displaystyle\dot\alpha_\partial=\iota_{\partial M}^*\left(\frac{\partial\mathscr L}{\partial\zeta}(\varphi,\nu,{\rm d}\varphi)\right)+\mathcal F_{\partial}(\varphi,\nu).
\end{array}\right.
\end{equation}
\end{proposition}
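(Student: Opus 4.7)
The proof will be a direct verification obtained by unpacking the definitions of each object appearing in \eqref{InfDimLagDiracSys_Family_kForms_Star} and matching components. My plan is to substitute the explicit expressions of the Dirac differential from \eqref{eq:DiracDifferential_kForms}, the Lagrangian force field from Definition \ref{Forces_KForms}, and the Dirac structure from \eqref{canonical_D_star}, and then read off the equivalence componentwise, exactly mirroring the computation carried out for $V=C^\infty(\mathcal B)$ in Proposition \ref{prop:lagrangediracfunctions}.

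First, I would apply Lemma \ref{lemma:partialderivativekforms} to rewrite the functional derivatives $\delta L/\delta\varphi$ and $\delta L/\delta\nu$ as elements of the restricted dual $V^\star=\Omega^{m-k}(M)\times\Omega^{m-k-1}(\partial M)$. Plugging these into \eqref{eq:DiracDifferential_kForms} and subtracting the explicit expression of $\widetilde F(\varphi,\nu)$ given in Definition \ref{Forces_KForms}, I obtain
\begin{equation*}
\bigl({\rm d}_D L-\widetilde F\bigr)(\varphi,\nu)=\Bigl(\varphi,\tfrac{\partial\mathscr L}{\partial\nu},0,\; -\tfrac{\partial\mathscr L}{\partial\varphi}+(-1)^k{\rm d}\tfrac{\partial\mathscr L}{\partial\zeta}-\mathcal F,\;-\iota_{\partial M}^*\tfrac{\partial\mathscr L}{\partial\zeta}-\mathcal F_\partial,\;\nu\Bigr),
\end{equation*}
viewed as an element of $T^\star(T^\star V)$ with base point $(\varphi,\partial\mathscr L/\partial\nu,0)$.

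Next, the membership condition in \eqref{InfDimLagDiracSys_Family_kForms_Star} imposes two requirements. The first is that the base point $(\varphi,\alpha,\alpha_\partial)$ agrees with the base point of ${\rm d}_D L-\widetilde F$; this immediately yields $\alpha=\partial\mathscr L/\partial\nu(\varphi,\nu,{\rm d}\varphi)$ and $\alpha_\partial=0$, that is, the Legendre-type equations of \eqref{LagDiracSystem_Family_kForms_Star}. The second requirement is the graph condition defining $D_{T^\star V}$ in \eqref{canonical_D_star}: identifying the "momentum part" $(\delta\alpha,\delta\alpha_\partial,\delta\varphi)$ of ${\rm d}_D L-\widetilde F$ with the components listed above and applying the three relations $-\dot\alpha=\delta\alpha$, $-\dot\alpha_\partial=\delta\alpha_\partial$, $\dot\varphi=\delta\varphi$, I recover the kinematic equation $\dot\varphi=\nu$ together with the evolution equations for $\dot\alpha$ and $\dot\alpha_\partial$ in \eqref{LagDiracSystem_Family_kForms_Star}.

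Conversely, given a curve satisfying \eqref{LagDiracSystem_Family_kForms_Star}, running the same identifications backwards shows that the tuple $((\varphi,\alpha,\alpha_\partial,\dot\varphi,\dot\alpha,\dot\alpha_\partial),\,{\rm d}_D L(\varphi,\nu)-\widetilde F(\varphi,\nu))$ lies in $D_{T^\star V}(\varphi,\alpha,\alpha_\partial)$, completing the equivalence. There is no genuine obstacle here beyond careful bookkeeping, since the work of moving boundary terms into the restricted dual has already been done in Lemma \ref{lemma:partialderivativekforms}; the only point that requires a small care is keeping track of the sign $(-1)^k$ coming from the graded Leibniz rule \eqref{eq:exteriorderivative} used in that lemma, which is exactly what propagates into the sign appearing in the second equation of \eqref{LagDiracSystem_Family_kForms_Star}.
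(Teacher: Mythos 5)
Your proposal is correct and follows exactly the route of the paper, whose proof is the same direct computation: substitute Lemma \ref{lemma:partialderivativekforms} into the Dirac differential \eqref{eq:DiracDifferential_kForms}, subtract $\widetilde F$ from Definition \ref{Forces_KForms}, and read off the base-point and graph conditions of $D_{T^\star V}$ from \eqref{canonical_D_star} componentwise. All signs, including the $(-1)^k$ from the graded Leibniz rule, check out against \eqref{LagDiracSystem_Family_kForms_Star}.
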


\begin{proof}
A direct computation using \eqref{eq:DiracDifferential_kForms}, as well as Definitions \ref{def:diracstructurekforms}, \ref{Forces_KForms} and \ref{LagDiracSys_Family_kForms} leads to the desired result.
\end{proof}

\begin{remark}[Constant external forces]\rm\label{remark:constantforces}
Let us consider the special case of constant exterior body and boundary forces, i.e., $\mathcal F(\varphi,\nu)=\mathcal F_0\in\Omega^{m-k}(M)$ and $\mathcal F_\partial(\varphi,\nu)=\mathcal F_{\partial,0}\in\Omega^{m-k-1}(\partial M)$ for each $(\varphi,\nu)\in TV$. In this case, they may be included directly in the Lagrangian; namely,
\[
L_F(\varphi ,\nu)=L(\varphi,\nu)+\langle(\mathcal F_0,\mathcal F_{\partial,0}),\varphi\rangle=\int_M\left(\mathscr{L}( \varphi , \nu , {\rm d} \varphi ) + \varphi\wedge\mathcal{F}_0 \right)+ \int_{ \partial M} \iota_{\partial M}^*\varphi\wedge\mathcal{F}_{\partial,0},
\]
for each $(\varphi,\nu)\in TV$. An easy check using \eqref{eq:DiracDifferential_kForms}, as well as Definition \ref{def:diracstructurekforms} and Lemma \ref{lemma:partialderivativekforms}, shows that the Lagrange--Dirac system \eqref{InfDimLagDiracSys_Family_kForms_Star}  without the forcing term $\widetilde{F}$ in it, but with $L$ replaced by $L_F$, yields equations \eqref{LagDiracSystem_Family_kForms_Star}.
\end{remark}

The \emph{energy density associated to $\mathscr{L}$} is defined by
\begin{equation*}
\mathscr{E}(\varphi_x,\nu_x,\zeta_x)=\nu_x\wedge\frac{\partial\mathscr{L}}{\partial\nu}(\varphi_x,\nu_x,\zeta_x)-\mathscr{L}(\varphi_x,\nu_x,\zeta_x),
\end{equation*}
for each $(\varphi_x,\nu_x,\zeta_x)\in\bigwedge^k T^*M\times_M\bigwedge^k T^*M\times_M\bigwedge^{k+1}T^*M$. The following result gives the local and global energy balance along the solutions of the forced Lagrange--Dirac equations.

\begin{proposition}[Energy balance]\label{prop:energykforms}\rm 
If $(\varphi,\nu,\alpha,\alpha_\partial):[t_0,t_1]\to TV\oplus T^\star V$ is a solution of \eqref{InfDimLagDiracSys_Family_kForms_Star}, then
\begin{align*}
\frac{\partial }{\partial t}\mathscr{E}(\varphi,\nu,{\rm d}\varphi)=-{\rm d}\left(\nu\wedge\frac{\partial\mathscr L}{\partial\zeta}(\varphi,\nu,{\rm d}\varphi)\right)+\nu\wedge\mathcal F(\varphi,\nu),
\end{align*}
and
\begin{equation}\label{GEB}
\frac{d}{dt} \int_M \mathscr{E}(\varphi,\nu,{\rm d}\varphi)=\underbrace{\int_M\nu\wedge\mathcal F(\varphi,\nu)}_{\text{spatially distributed contribution}}+\underbrace{\int_{\partial M}\iota_{\partial M}^*\nu\wedge\mathcal F_\partial(\varphi,\nu)}_{\text{boundary contribution}}.
\end{equation}

\end{proposition}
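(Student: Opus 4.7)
The plan is to differentiate the local energy $\mathscr{E}(\varphi,\nu,{\rm d}\varphi)$ directly in time, substitute the Lagrange--Dirac equations \eqref{LagDiracSystem_Family_kForms_Star}, and then recognize the emerging expression as an exact differential using the graded Leibniz rule. Using the definition of $\mathscr{E}$ and the chain rule (applied through the fiber-derivative formulas \eqref{PD_2}), I would write
\begin{equation*}
\frac{\partial\mathscr{E}}{\partial t}=\dot\nu\wedge\frac{\partial\mathscr{L}}{\partial\nu}+\nu\wedge\frac{\partial}{\partial t}\frac{\partial\mathscr{L}}{\partial\nu}-\dot\varphi\wedge\frac{\partial\mathscr{L}}{\partial\varphi}-\dot\nu\wedge\frac{\partial\mathscr{L}}{\partial\nu}-{\rm d}\dot\varphi\wedge\frac{\partial\mathscr{L}}{\partial\zeta}.
\end{equation*}
The $\dot\nu$ terms cancel, and using $\alpha=\partial\mathscr{L}/\partial\nu$ and $\dot\varphi=\nu$ from \eqref{LagDiracSystem_Family_kForms_Star}, the right-hand side becomes $\nu\wedge\dot\alpha-\nu\wedge\partial\mathscr{L}/\partial\varphi-{\rm d}\nu\wedge\partial\mathscr{L}/\partial\zeta$.

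Next, I would substitute the evolution equation for $\dot\alpha$ from \eqref{LagDiracSystem_Family_kForms_Star}, namely $\dot\alpha=\partial\mathscr{L}/\partial\varphi-(-1)^k{\rm d}(\partial\mathscr{L}/\partial\zeta)+\mathcal{F}$. The $\partial\mathscr{L}/\partial\varphi$ contributions cancel, leaving
\begin{equation*}
\frac{\partial\mathscr{E}}{\partial t}=-(-1)^k\nu\wedge{\rm d}\frac{\partial\mathscr{L}}{\partial\zeta}-{\rm d}\nu\wedge\frac{\partial\mathscr{L}}{\partial\zeta}+\nu\wedge\mathcal{F}(\varphi,\nu).
\end{equation*}
At this point, the key observation is the graded Leibniz rule applied to the $k$-form $\nu$ and the $(m-k-1)$-form $\partial\mathscr{L}/\partial\zeta$:
\begin{equation*}
{\rm d}\!\left(\nu\wedge\frac{\partial\mathscr{L}}{\partial\zeta}\right)={\rm d}\nu\wedge\frac{\partial\mathscr{L}}{\partial\zeta}+(-1)^k\nu\wedge{\rm d}\frac{\partial\mathscr{L}}{\partial\zeta},
\end{equation*}
which is exactly the sum of the first two terms on the right. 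This yields the desired local balance.

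For the global statement, I would integrate the local identity over $M$ and apply Stokes' theorem to the exact-form term, producing the boundary integral $\int_{\partial M}\iota_{\partial M}^*\left(\nu\wedge\partial\mathscr{L}/\partial\zeta\right)=\int_{\partial M}\iota_{\partial M}^*\nu\wedge\iota_{\partial M}^*(\partial\mathscr{L}/\partial\zeta)$. The final step is to replace $\iota_{\partial M}^*(\partial\mathscr{L}/\partial\zeta)$ using the last equation of \eqref{LagDiracSystem_Family_kForms_Star}: since $\alpha_\partial\equiv 0$ along solutions, also $\dot\alpha_\partial=0$, hence $\iota_{\partial M}^*(\partial\mathscr{L}/\partial\zeta)=-\mathcal{F}_\partial(\varphi,\nu)$, turning the boundary integral into $+\int_{\partial M}\iota_{\partial M}^*\nu\wedge\mathcal{F}_\partial(\varphi,\nu)$ after accounting for the overall minus sign.

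The only delicate point is the careful bookkeeping of the $(-1)^k$ factor: the graded Leibniz rule, the signs appearing in the definitions \eqref{PD_2}, and the form of the Euler--Lagrange-type equation in \eqref{LagDiracSystem_Family_kForms_Star} must line up consistently. Once one checks these signs, the proof is essentially a one-line algebraic computation followed by Stokes' theorem.
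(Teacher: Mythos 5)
Your proposal is correct and follows essentially the same route as the paper's proof: differentiate $\mathscr{E}$ in time, cancel the $\dot\nu$ and $\partial\mathscr{L}/\partial\varphi$ terms using \eqref{LagDiracSystem_Family_kForms_Star}, collect the remaining terms into $-{\rm d}\bigl(\nu\wedge\partial\mathscr{L}/\partial\zeta\bigr)$ via the graded Leibniz rule, then integrate, apply Stokes, and use $\dot\alpha_\partial=0$ to replace $\iota_{\partial M}^*(\partial\mathscr{L}/\partial\zeta)$ by $-\mathcal{F}_\partial$. The sign bookkeeping you flag as delicate does line up exactly as you describe.
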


\begin{proof}
By taking the time derivative of $\mathscr{E}(\varphi,\nu,{\rm d}\varphi)$ along the solution curve and using the equations of motion \eqref{LagDiracSystem_Family_kForms_Star}, we obtain
\begin{equation*}
\begin{split}
\frac{\partial }{\partial t}\mathscr{E}(\varphi,\nu,{\rm d}\varphi)
&=\dot\nu\wedge\frac{\partial\mathscr{L}}{\partial\nu}+\nu\wedge\left(\frac{\partial }{\partial t}\frac{\partial\mathscr{L}}{\partial\nu}\right)-\left(\dot\varphi\wedge\frac{\partial\mathscr{L}}{\partial\varphi}+\dot\nu\wedge\frac{\partial\mathscr{L}}{\partial\nu}+{\rm d}\dot\varphi\wedge\frac{\partial\mathscr{L}}{\partial\zeta}\right)\\
&=\nu\wedge\left(-(-1)^k{\rm d}\frac{\partial\mathscr L}{\partial\zeta}+\mathcal F(\varphi,\nu)\right)-{\rm d}\nu\wedge\frac{\partial\mathscr{L}}{\partial\zeta}\\
&=-{\rm d}\left(\nu\wedge\frac{\partial\mathscr L}{\partial\zeta}(\varphi,\nu,{\rm d}\varphi) \right)+\nu\wedge\mathcal F(\varphi,\nu).
\end{split}
\end{equation*}
By using integration by parts, we arrive at the desired result:
\begin{equation*}
\begin{split}
\frac{d}{dt} \int_M \mathscr{E}(\varphi,\nu,{\rm d}\varphi)
&=\int_{M}\left[-{\rm d}\left(\nu\wedge\frac{\partial\mathscr L}{\partial\zeta}(\varphi,\nu,{\rm d}\varphi)\right)+\nu\wedge\mathcal F(\varphi,\nu)\right]\\
&=\int_{\partial M}-\iota_{\partial M}^*\nu\wedge\iota_{\partial M}^*\left(\frac{\partial\mathscr L}{\partial\zeta}\right)+\int_{M}\nu\wedge\mathcal F(\varphi,\nu)\\
&=\int_{\partial M}\iota_{\partial M}^*\nu\wedge\mathcal F_\partial(\varphi,\nu) +\int_{M}\nu\wedge\mathcal F(\varphi,\nu).
\end{split}
\end{equation*}
\end{proof}

\begin{remark}\rm As in \eqref{GEB_1}, this equation shows the explicit form of each contribution to the energy change, both within
the entire domain and through its boundary, now in the setting of $k$-forms.
Observe that for the unforced case, i.e., $F=0$, the previous result yields the energy conservation:
\begin{align*}
\frac{\partial }{\partial t}\mathscr{E}(\varphi,\nu,{\rm d}\varphi)+{\rm d}\left(\nu\wedge\frac{\partial\mathscr L}{\partial\zeta}(\varphi,\nu,{\rm d}\varphi)\right)=0,\qquad & \frac{d}{dt} \int_M \mathscr{E}(\varphi,\nu,{\rm d}\varphi)=0.
\end{align*}
\end{remark}

\subsection{Variational structures for Lagrange--Dirac dynamical systems on the space of \texorpdfstring{$k$}{k}-forms}

Before giving the variational structure associated to the Lagrange--Dirac dynamical system for the solution curve $( \varphi , \nu,  \alpha , \alpha _ \partial ):[t_0,t_1] \rightarrow TV \oplus T^\star V$ \textit{in the Pontryagin bundle}, we give the variational structure for the 
solution curve $ \varphi : [t_0,t_1] \rightarrow V$ obtained after having eliminated the variables $ \nu , \alpha , \alpha _ \partial $. This is an infinite-dimensional version of the Lagrange--d'Alembert principle for forced systems.

\subsubsection{Lagrange--d'Alembert principle for infinite-dimensional Lagrangian systems}

Given a Lagrangian $L:TV \rightarrow \mathbb{R} $ and a force $F:TV \rightarrow T^\star V$, the general form of the Lagrange--d'Alembert principle is
\[
\delta\int_{t_{0}}^{t_{1}} L(\varphi, \dot{\varphi})\,dt+\int_{t_{0}}^{t_{1}} \left<F(\varphi,\dot{\varphi}), \delta{\varphi}\right>dt=0.
\]
By specializing the Lagrangian and the force using \eqref{eq:lagrangiankforms} and \eqref{Fstar}, we get the \emph{Lagrange--d'Alembert principle} for the Lagrange--Dirac dynamical systems on the space of \texorpdfstring{$k$}{k}-forms as follows.

\begin{proposition}\label{LdAPrinLag_kForms}\rm
A curve $\varphi: [t_{0},t_{1}] \to\Omega^{k}(M)$ is critical for the Lagrange--d'Alembert action functional, i.e.,
\begin{equation*}
\delta\int_{t_{0}}^{t_{1}}\left( \int_{M}\mathscr L(\varphi,\dot\varphi,{\rm d}\varphi)\right)\,dt+\int_{t_{0}}^{t_{1}}\left( \int_{M} \delta{\varphi}\wedge\mathcal F (\varphi,\dot{\varphi})+\int_{\partial M}\iota_{\partial M}^*\delta{\varphi}\wedge\mathcal F _\partial(\varphi,\dot{\varphi})\right)dt=0,
\end{equation*}
for free variations $\delta\varphi$ vanishing at $t=t_0, t_1$, if and only if it satisfies the \emph{Lagrange--d'Alembert equations}:
\begin{equation*}
\left\{\begin{array}{l}
\displaystyle\frac{\partial }{\partial t}\frac{\partial\mathscr L}{\partial\nu}(\varphi,\dot\varphi,{\rm d}\varphi)=\frac{\partial\mathscr L}{\partial\varphi}(\varphi,\dot\varphi,{\rm d}\varphi)-(-1)^k\operatorname{\rm d}\frac{\partial\mathscr L}{\partial\zeta}(\varphi,\dot\varphi,{\rm d}\varphi)+\mathcal F (\varphi,\dot{\varphi}),\vspace{2mm}\\
\displaystyle\mathcal F _\partial(\varphi, \dot\varphi)=-\iota_{\partial M}^*\left(\frac{\partial\mathscr L}{\partial\zeta}(\varphi,\dot\varphi,{\rm d}\varphi)\right).
\end{array}\right.
\end{equation*}
\end{proposition}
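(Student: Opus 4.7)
The plan is to mimic the scalar case (cf.\ \S\ref{LdA_BF}) in the differential-form setting, and to reuse essentially the computation already carried out in Lemma \ref{lemma:partialderivativekforms}. First I would express the first-order variation of the action using the pointwise definitions \eqref{PD_2} of the partial derivatives of $\mathscr L$:
\begin{equation*}
\delta\int_{t_0}^{t_1}\int_M\mathscr L(\varphi,\dot\varphi,{\rm d}\varphi)\,dt
=\int_{t_0}^{t_1}\int_M\Bigl(\delta\varphi\wedge\tfrac{\partial\mathscr L}{\partial\varphi}+\delta\dot\varphi\wedge\tfrac{\partial\mathscr L}{\partial\nu}+{\rm d}\delta\varphi\wedge\tfrac{\partial\mathscr L}{\partial\zeta}\Bigr)\,dt,
\end{equation*}
where the arguments of the partial derivatives are $(\varphi,\dot\varphi,{\rm d}\varphi)$. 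Since $\delta$ commutes with both $\partial_t$ and the spatial exterior derivative ${\rm d}$, this step is straightforward.

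Next I would integrate by parts to move all derivatives off $\delta\varphi$. For the spatial term $\int_M{\rm d}\delta\varphi\wedge\tfrac{\partial\mathscr L}{\partial\zeta}$ I would invoke the Leibniz rule \eqref{eq:exteriorderivative} and Stokes' theorem exactly as in the proof of Lemma \ref{lemma:partialderivativekforms}, giving
\begin{equation*}
\int_M{\rm d}\delta\varphi\wedge\tfrac{\partial\mathscr L}{\partial\zeta}
=\int_{\partial M}\iota_{\partial M}^{\ast}\delta\varphi\wedge\iota_{\partial M}^{\ast}\tfrac{\partial\mathscr L}{\partial\zeta}
-(-1)^k\int_M\delta\varphi\wedge{\rm d}\tfrac{\partial\mathscr L}{\partial\zeta};
\end{equation*}
this is where the sign $(-1)^k$ enters and, being the only delicate bookkeeping in the argument, is the most likely source of slip. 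For the temporal term I would integrate by parts in $t$, noting that $\delta\dot\varphi\wedge\tfrac{\partial\mathscr L}{\partial\nu}=-(-1)^k\tfrac{\partial\mathscr L}{\partial\nu}\wedge\delta\dot\varphi$ is not needed since we simply write $\delta\dot\varphi\wedge\tfrac{\partial\mathscr L}{\partial\nu}=\partial_t\bigl(\delta\varphi\wedge\tfrac{\partial\mathscr L}{\partial\nu}\bigr)-\delta\varphi\wedge\partial_t\tfrac{\partial\mathscr L}{\partial\nu}$, and the endpoint contribution vanishes because $\delta\varphi(t_0)=\delta\varphi(t_1)=0$.

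Collecting everything, the Lagrange--d'Alembert critical condition becomes
\begin{equation*}
\int_{t_0}^{t_1}\!\!\int_M \delta\varphi\wedge\Bigl(\tfrac{\partial\mathscr L}{\partial\varphi}-(-1)^k{\rm d}\tfrac{\partial\mathscr L}{\partial\zeta}-\partial_t\tfrac{\partial\mathscr L}{\partial\nu}+\mathcal F(\varphi,\dot\varphi)\Bigr)\,dt
+\int_{t_0}^{t_1}\!\!\int_{\partial M}\iota_{\partial M}^{\ast}\delta\varphi\wedge\Bigl(\iota_{\partial M}^{\ast}\tfrac{\partial\mathscr L}{\partial\zeta}+\mathcal F_\partial(\varphi,\dot\varphi)\Bigr)\,dt=0.
\end{equation*}
To finish, I would apply the fundamental lemma of the calculus of variations in two stages. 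Taking first variations $\delta\varphi$ compactly supported in the interior of $M$ (so that $\iota_{\partial M}^{\ast}\delta\varphi=0$) kills the boundary integral and yields the interior equation. Then, allowing $\delta\varphi$ to be arbitrary up to the boundary, the interior integral already vanishes, and the remaining boundary integral forces $\iota_{\partial M}^{\ast}(\partial\mathscr L/\partial\zeta)=-\mathcal F_\partial(\varphi,\dot\varphi)$. The converse implication is immediate by reversing the chain of equalities. The non-degeneracy of the $L^2$-pairing on $M$ and on $\partial M$ (guaranteed by the restricted-dual setup of \S\ref{sec:restricteddualkforms}) is what legitimizes both applications of the fundamental lemma.
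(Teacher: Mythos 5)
Your proposal is correct and follows essentially the same route as the paper's proof: expand the first variation via the pointwise derivatives \eqref{PD_2}, integrate by parts in time and (via \eqref{eq:exteriorderivative} and Stokes) in space, and conclude with the fundamental lemma applied separately to interior and boundary terms; the sign $(-1)^k$ is handled correctly. The only blemish is the parenthetical commutation identity $\delta\dot\varphi\wedge\tfrac{\partial\mathscr L}{\partial\nu}=-(-1)^k\tfrac{\partial\mathscr L}{\partial\nu}\wedge\delta\dot\varphi$ (the correct graded sign would be $(-1)^{k(m-k)}$), but since you explicitly discard it, it does not affect the argument.
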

\begin{proof} The critical condition is computed as follows:
\begin{equation*}
\begin{split}
0&=\int_{t_{0}}^{t_{1}}\int_{M}\left(\delta\varphi\wedge\frac{\partial \mathscr L}{\partial \varphi} +\delta\dot\varphi\wedge\frac{\partial \mathscr L}{\partial\nu}+{\rm d}\delta\varphi\wedge\frac{\partial \mathscr L}{\partial \zeta}\right) dt+\int_{t_0}^{t_1}\langle F(\varphi,\dot\varphi),\delta\varphi\rangle dt\\[2mm]
&=\int_{t_{0}}^{t_{1}}\left[\int_{M}\delta\varphi\wedge\left(\frac{\partial \mathscr L}{\partial \varphi}- \frac{\partial }{\partial t} \frac{\partial \mathscr L}{\partial\nu}
- (-1)^k{\rm d}\frac{\partial\mathscr L}{\partial\zeta}+\mathcal F(\varphi,\dot\varphi)\right)\right.\\[2mm]
&\hspace{1cm}\left.+\int_{\partial M}\iota_{\partial M}^*\delta\varphi\wedge\left(\iota_{\partial M}^*\left(\frac{\partial\mathscr L}{\partial\zeta}\right)+\mathcal F_\partial(\varphi,\dot\varphi)\right) \right]dt
+\left[\int_M\delta \varphi\wedge\frac{\partial \mathscr L}{\partial\nu}\right]_{t_{0}}^{t_{1}},
\end{split}
\end{equation*}
where we have used \eqref{eq:exteriorderivative} and the Stokes theorem. Since the variations $ \delta  \varphi $ are free and vanish at $t=t_0, t_1$, we conclude.
\end{proof}

\begin{remark}\rm
For the unforced case, the Lagrange--d'Alembert principle reduces to the Hamilton principle, which yields the Euler--Lagrange equations.
\end{remark}

\subsubsection{Lagrange--d'Alembert--Pontryagin principle for Lagrange--Dirac dynamical systems on the space of  \texorpdfstring{$k$}{k}-forms}

The Lagrange--d'Alembert--Pontryagin principle constitutes the natural extension of the Lagrange--d'Alembert principle given in Proposition \ref{LdAPrinLag_kForms} that corresponds to the dynamics as given by the Lagrange--Dirac system. Its general form reads
\[
\delta\int_{t_{0}}^{t_{1}}\bigl( L(\varphi,\nu)+ \left< (\alpha, \alpha_{\partial}), \dot\varphi-\nu\right>\bigr)\,dt+\int_{t_{0}}^{t_{1}} \left<F(\varphi,\dot{\varphi}), \delta{\varphi}\right> dt=0.
\]

\begin{proposition}\rm\label{HPPrinciple_OmegakM}
A curve $(\varphi,\nu,\alpha,\alpha_\partial):[t_0,t_1]\to TV\oplus T^\star V$ is critical for the \emph{Lagrange--d'Alembert--Pontryagin action functional}, i.e.,
\begin{equation*}\label{HPActInt_Star}
\begin{split}
&\delta\int_{t_{0}}^{t_{1}}\left(\int_{M}\mathscr L(\varphi,\nu,{\rm d}\varphi)
+\int_{M}(\dot\varphi-\nu)\wedge\alpha  + \int_{\partial M} (\dot\varphi-\nu)\wedge\alpha_{\partial}\right) dt\\[2mm]
&\hspace{3cm}
+\int_{t_{0}}^{t_{1}}\left(\int_{M}\delta{\varphi}\wedge\mathcal F(\varphi,\dot{\varphi})+ \int_{\partial M}\delta{\varphi}\wedge\mathcal F_\partial(\varphi,\dot{\varphi})\right) dt=0,
\end{split}
\end{equation*}
for free variations $\delta\varphi,\delta \nu,\delta\alpha,\delta\alpha_\partial$ with $\delta\varphi$ vanishing at $t=t_0,t_1$, if and only if the curve $(\varphi,\nu,\alpha,\alpha_\partial) \in TV\oplus T^\star V$ satisfies the \textit{Lagrange--d'Alembert--Pontryagin equations}, which are exactly the equations of motion given in \eqref{LagDiracSystem_Family_kForms_Star}.
\end{proposition}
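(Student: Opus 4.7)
The plan is to mimic the proof of Proposition \ref{LdAPPrinciple_FamilyCinftyM}, replacing the integration by parts formula for $\operatorname{div}$ with the Leibniz rule \eqref{eq:exteriorderivative} for ${\rm d}$ together with Stokes' theorem, and tracking the extra sign $(-1)^k$ that arises when commuting ${\rm d}$ past the $k$-form $\delta\varphi$. Since the variations $\delta\varphi,\delta\nu,\delta\alpha,\delta\alpha_\partial$ are free and $\delta\varphi$ vanishes at the temporal endpoints, the equivalence will follow by reading off the coefficient of each independent variation.

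Concretely, I would first expand the Lagrangian variation using the defining relations \eqref{PD_2} to obtain
\[
\delta\mathscr L(\varphi,\nu,{\rm d}\varphi)=\delta\varphi\wedge\frac{\partial\mathscr L}{\partial\varphi}+\delta\nu\wedge\frac{\partial\mathscr L}{\partial\nu}+{\rm d}\delta\varphi\wedge\frac{\partial\mathscr L}{\partial\zeta},
\]
and then apply \eqref{eq:exteriorderivative} to rewrite
\[
{\rm d}\delta\varphi\wedge\frac{\partial\mathscr L}{\partial\zeta}={\rm d}\!\left(\delta\varphi\wedge\frac{\partial\mathscr L}{\partial\zeta}\right)-(-1)^k\delta\varphi\wedge{\rm d}\frac{\partial\mathscr L}{\partial\zeta},
\]
so that Stokes' theorem converts the exact term into the boundary integral $\int_{\partial M}\iota_{\partial M}^*\delta\varphi\wedge\iota_{\partial M}^*\bigl(\partial\mathscr L/\partial\zeta\bigr)$. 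This is precisely the identity used in Lemma \ref{lemma:partialderivativekforms} to identify $\delta L/\delta\varphi$ with an element of $V^\star$.

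Next, I would vary the Pontryagin term $\int_M(\dot\varphi-\nu)\wedge\alpha+\int_{\partial M}\iota_{\partial M}^*(\dot\varphi-\nu)\wedge\alpha_\partial$. Variations in $\delta\alpha$ and $\delta\alpha_\partial$ immediately produce the first-order constraint $\dot\varphi=\nu$. Variations in $\delta\nu$ combined with the $\delta\nu\wedge\partial\mathscr L/\partial\nu$ term yield, after using the $L^2$-pairing, the Legendre transform relations $\alpha=\partial\mathscr L/\partial\nu$ and $\alpha_\partial=0$, where the latter emerges because $\partial\mathscr L/\partial\nu$ has no boundary component (Lemma \ref{lemma:partialderivativekforms}). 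Finally, integrating the $\alpha\wedge\delta\dot\varphi$ and $\alpha_\partial\wedge\iota_{\partial M}^*\delta\dot\varphi$ contributions by parts in $t$ (the endpoint contributions vanish because $\delta\varphi(t_0)=\delta\varphi(t_1)=0$) produces $-\dot\alpha\wedge\delta\varphi$ and $-\dot\alpha_\partial\wedge\iota_{\partial M}^*\delta\varphi$, which combine with the body and boundary pieces from the Lagrangian variation and from the force pairing $\langle F(\varphi,\dot\varphi),\delta\varphi\rangle=\int_M\delta\varphi\wedge\mathcal F+\int_{\partial M}\iota_{\partial M}^*\delta\varphi\wedge\mathcal F_\partial$.

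Collecting terms, the critical condition reduces to requiring that the coefficient of $\delta\varphi$ vanish on $M$ and that the coefficient of $\iota_{\partial M}^*\delta\varphi$ vanish on $\partial M$, which gives the remaining two equations
\[
\dot\alpha=\frac{\partial\mathscr L}{\partial\varphi}-(-1)^k{\rm d}\frac{\partial\mathscr L}{\partial\zeta}+\mathcal F(\varphi,\nu),\qquad\dot\alpha_\partial=\iota_{\partial M}^*\!\left(\frac{\partial\mathscr L}{\partial\zeta}\right)+\mathcal F_\partial(\varphi,\nu),
\]
together with the Legendre relations and $\dot\varphi=\nu$ derived above; these are exactly \eqref{LagDiracSystem_Family_kForms_Star}. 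The only delicate step is keeping the $(-1)^k$ sign and the boundary pullback consistent throughout the calculation; once this bookkeeping is done, the converse implication is immediate since each displayed equation corresponds to a single independent free variation. This also matches, via Proposition \ref{proposition:LDkforms}, the Lagrange--Dirac condition \eqref{InfDimLagDiracSys_Family_kForms_Star}.
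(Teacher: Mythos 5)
Your proposal is correct and follows essentially the same route as the paper, whose proof simply states that the result follows by direct computation as in Proposition \ref{LdAPrinLag_kForms}; you supply exactly that computation, with the $(-1)^k$ sign from \eqref{eq:exteriorderivative}, the Stokes boundary term, the time integration by parts, and the independent variations $\delta\alpha,\delta\alpha_\partial,\delta\nu,\delta\varphi$ each yielding the corresponding equation of \eqref{LagDiracSystem_Family_kForms_Star}. In particular your observation that $\alpha_\partial=0$ arises from the boundary part of the $\delta\nu$ variation (since $\delta L/\delta\nu$ has no boundary component by Lemma \ref{lemma:partialderivativekforms}) is the right reading of the paper's argument.
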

\begin{proof}
The results are obtained by direct computations, as in the proof of Proposition \ref{LdAPrinLag_kForms}. \end{proof}

\begin{remark}\rm
For unforced systems, the Lagrange--d'Alembert--Pontryagin principle reduces to the Hamilton--Pontryagin principle, which yields the Hamilton--Pontryagin equations.
\end{remark}

Lastly, analogous to the previous cases (recall Theorems \ref{Theorem:LDfinitedim} and \ref{Theorem:LDfunctions}), the results of this section are summarized in the following theorem.

\begin{theorem}\label{theorem:ForcedLDkforms}\rm
Let $(\varphi,\nu, \alpha,\alpha_\partial): [t_0,t_1] \to TV \oplus T^\star V$ and $\varphi=\rho\circ(\varphi,\nu, \alpha,\alpha_\partial): [t_0,t_1] \to V$, where $\rho:TV\oplus T^\star V\to V$ is the natural projection. The following  statements are equivalent:
\vskip 2pt
\begin{itemize}
\item[(a)] The curve $\varphi: [t_0,t_1] \to V$ is critical for the Lagrange--d'Alembert action functional:
\begin{equation*}
\begin{split}
&\delta\int_{t_{0}}^{t_{1}} L(\varphi, \dot{\varphi})\,dt+\int_{t_{0}}^{t_{1}} \left<F(\varphi,\dot{\varphi}), \delta{\varphi}\right> dt=0,
\end{split}
\end{equation*}
for free variations $\delta\varphi$ vanishing at $t=t_0, t_1$.
\vskip 5pt
\item[(b)] The curve $\varphi: [t_0,t_1] \to V$ is a solution of the Lagrange--d'Alembert equations:
\begin{equation*}
\left\{\begin{array}{l}
\displaystyle\frac{\partial }{\partial t}\frac{\partial\mathscr L}{\partial\nu}(\varphi,\dot\varphi,{\rm d}\varphi)=\frac{\partial\mathscr L}{\partial\varphi}(\varphi,\dot\varphi,{\rm d}\varphi)-(-1)^k\operatorname{\rm d}\frac{\partial\mathscr L}{\partial\zeta}(\varphi,\dot\varphi,{\rm d}\varphi)+\mathcal F (\varphi,\dot{\varphi}),\vspace{2mm}\\
\displaystyle\mathcal F _\partial(\varphi, \dot\varphi)=-\iota_{\partial M}^*\left(\frac{\partial\mathscr L}{\partial\zeta}(\varphi,\dot\varphi,{\rm d}\varphi)\right).
\end{array}\right.
\end{equation*}
\vskip 5pt
\item[(c)] The curve $(\varphi,\nu, \alpha,\alpha_\partial): [t_0,t_1] \to TV \oplus T^\star V$ satisfies the critical condition for the Lagrange--d'Alembert--Pontryagin action functional:
\begin{equation*}
\begin{split}
&\delta\int_{t_{0}}^{t_{1}}\bigl( L(\varphi,\nu)+ \left< (\alpha, \alpha_{\partial}), (\dot\varphi-\nu)\right>\bigr)dt+\int_{t_{0}}^{t_{1}} \left<F(\varphi,\dot{\varphi}), \delta{\varphi}\right> dt=0,
\end{split}
\end{equation*}
for free variations $\delta\varphi,\delta \nu,\delta\alpha,\delta\alpha_\partial$ with $\delta\varphi$ vanishing at $t=t_0,t_1$.
\vskip 5pt
\item[(d)] The curve $(\varphi,\nu, \alpha,\alpha_\partial): [t_0,t_1] \to TV \oplus T^\star V$ is a solution of the Lagrange--d'Alembert--Pontryagin equations:
\begin{equation*}
\left\{\begin{array}{ll}
\dot\varphi=\nu,&\\[3mm]
\displaystyle\alpha=\frac{\partial\mathscr L}{\partial\nu}(\varphi,\nu,{\rm d}\varphi),\; & \displaystyle\dot\alpha=\frac{\partial\mathscr L}{\partial\varphi}(\varphi,\nu,{\rm d}\varphi)-(-1)^k{\rm d}\frac{\partial\mathscr L}{\partial\zeta}(\varphi,\nu,{\rm d}\varphi)+\mathcal F(\varphi,\nu),\vspace{0.2cm}\\
\alpha_\partial=0, & \displaystyle\dot\alpha_\partial=\iota_{\partial M}^*\left(\frac{\partial\mathscr L}{\partial\zeta}(\varphi,\nu,{\rm d}\varphi)\right)+\mathcal F_{\partial}(\varphi,\nu).
\end{array}\right.
\end{equation*}
\vskip 5pt
\item[(e)] The curve $(\varphi,\nu, \alpha,\alpha_\partial): [t_0,t_1] \to TV \oplus T^\star V$ is a solution of the Lagrange--Dirac dynamical system
$(V=\Omega^k(M), D_{T^\star V}, L,F)$, i.e., it satisfies the condition 
\[
\big((\varphi,\alpha,\alpha_\partial,\dot\varphi,\dot\alpha,\dot\alpha_\partial),{\rm d}_{ D} L(\varphi,\nu)-\widetilde{F}(\varphi,\nu)\big)\in D_{T^\star V}(\varphi,\alpha,\alpha_\partial).
\]
\end{itemize}
\end{theorem}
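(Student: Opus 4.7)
The plan is to close the chain of equivalences using results already established in this section: (a)$\Leftrightarrow$(b) is Proposition \ref{LdAPrinLag_kForms}, (c)$\Leftrightarrow$(d) is Proposition \ref{HPPrinciple_OmegakM}, and (d)$\Leftrightarrow$(e) is Proposition \ref{proposition:LDkforms}. Thus the only remaining link is (b)$\Leftrightarrow$(d), which bridges the intrinsic formulation on $V$ with the formulation on the Pontryagin bundle $TV\oplus T^\star V$. I would prove it by elementary elimination of the auxiliary variables $\nu$, $\alpha$, $\alpha_\partial$.

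For (d)$\Rightarrow$(b), I would use the first equation $\dot\varphi=\nu$ to eliminate $\nu$, and the algebraic conditions $\alpha=\partial\mathscr L/\partial\nu(\varphi,\nu,\mathrm d\varphi)$ and $\alpha_\partial\equiv 0$ to express $\alpha$ and $\alpha_\partial$ in closed form in terms of $\varphi$. Differentiating the formula for $\alpha$ in $t$ and substituting into the evolution equation for $\dot\alpha$ gives the interior Lagrange--d'Alembert equation of (b); the condition $\dot\alpha_\partial=0$ combined with the evolution equation for $\dot\alpha_\partial$ yields the boundary equation $\mathcal F_\partial(\varphi,\dot\varphi)=-\iota_{\partial M}^*(\partial\mathscr L/\partial\zeta(\varphi,\dot\varphi,\mathrm d\varphi))$ of (b). Conversely, for (b)$\Rightarrow$(d), given a solution $\varphi:[t_0,t_1]\to V$ of the Lagrange--d'Alembert equations, I would define $\nu:=\dot\varphi$, $\alpha:=\partial\mathscr L/\partial\nu(\varphi,\dot\varphi,\mathrm d\varphi)$, and $\alpha_\partial:\equiv 0$, and then each of the four equations of (d) follows immediately either by definition or from (b).

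I do not expect a main obstacle at this final step: the functional-analytic content---the identification of $V^\star$ inside $V'$ through the duality pairing in Definition \ref{DualParing_FamilykForms}, the explicit form \eqref{Omega_flat_kforms} of $\Omega_{T^\star V}^\flat$, the Dirac differential \eqref{eq:DiracDifferential_kForms}, and the explicit Dirac structure \eqref{canonical_D_star}---has been isolated into the preparatory results of \S\ref{sec:restricteddualkforms}--\S\ref{sec:LDkforms}, and the substantive analytic computation (integration by parts in the Stokes formulation producing the boundary term through $\iota_{\partial M}^*$) is already carried out in Lemma \ref{lemma:partialderivativekforms}. The entire argument therefore mirrors, step by step, the proof of Theorem \ref{Theorem:LDfunctions} in the scalar-function case, with the differential operators $\nabla$, $\operatorname{div}$ and the trace along $n$ replaced by $\mathrm d$ and the pullback $\iota_{\partial M}^*$. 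In this sense the theorem essentially packages the content of the three preceding propositions into a single synthetic statement that parallels the finite-dimensional Theorem \ref{Theorem:LDfinitedim}, and no extra hypothesis (in particular, no regularity of the Lagrangian) is needed.
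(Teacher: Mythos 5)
Your proposal is correct and matches the paper's own treatment: the theorem is presented there as a synthesis of Propositions \ref{LdAPrinLag_kForms}, \ref{HPPrinciple_OmegakM} and \ref{proposition:LDkforms}, with the remaining link (b)$\Leftrightarrow$(d) obtained exactly as you describe, by eliminating (resp.\ reconstructing) $\nu=\dot\varphi$, $\alpha=\partial\mathscr L/\partial\nu$, $\alpha_\partial\equiv 0$ and using $\dot\alpha_\partial=0$ to recover the boundary equation. No gaps.
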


\begin{remark}\label{rmk_SDS}\rm[Link with Stokes--Dirac structures (\cite{VdSMa2002, VaYoLeMa2010, SeVanSch2012})]
In the context of distributed port-Hamiltonian systems, the notion of Stokes--Dirac structures was proposed by \cite{VdSMa2002} and has been widely utilized in the field of nonlinear control theory. Stokes--Dirac structures are a specific type of constant Dirac structures appropriate for the treatment of system in which the energy and port variables are described by differential forms. It was shown in \cite{VaYoLeMa2010,SeVanSch2012} that Stokes--Dirac structures can be understood in the context of reduction by gauge symmetries of a Dirac structure associated to a canonical constant bivector.
We now illustrate the key difference between the Stokes--Dirac structures and the canonical Dirac structure we are considering here.

Recall that in our setting, the canonical Dirac structure $D_{T^\star V}$ is given on the cotangent bundle $T^\star V=V \times V^\star$, i.e., it is a subbundle $D_{T^\star V}\subset T (T^\star  V) \oplus T^\star (T^\star V)$, with the duality paring between $V$ and $V^\star$ given by 
\begin{equation*}
\left\langle(\alpha,\alpha_\partial),\varphi\right\rangle=\int_{ M}\varphi\wedge\alpha+\int_{\partial M}\iota_{\partial M}^*\varphi\wedge\alpha_\partial,
\end{equation*}
for each $(\alpha,\alpha_\partial)\in V^\star=\Omega^{m-k}(M) \times \Omega^{m-k-1}(\partial M)$ and $\varphi\in V=\Omega^k(M)$. We note that the restricted dual space $V^\star$ naturally includes the boundary force $\alpha_\partial \in \Omega^{m-k-1}(M)$ in addition to the distributed (interior) force $\alpha \in \Omega^{m-k}(M)$. Recall also that the canonical Dirac structure is obtained as the graph of the \textit{canonical symplectic form} on $T^\star V$, defined by using the duality paring that incorporates the boundary term (see \eqref{Omega_star} and Definition \ref{def:diracstructurekforms}).

On the other hand, in the setting of Stokes--Dirac structures on $V=\Omega^k(M)$, the dual space is chosen to be the ordinary smooth dual $V^\ast=\Omega^{m-k}(M)$ (not $V^\star$). In addition, the space of boundary flows, $\mathsf{F}=\Omega^{m-k-1}(\partial M)$, and its dual space, the space of boundary efforts, $\mathsf{E}:=\mathsf{F}^{\ast}=\Omega^{k}(\partial M)$, are employed. Then, for each $(\rho, \pi, \rho_{b}) \in V \times V^{\ast} \times \mathsf{F}^{\ast} = T^{\ast}V \times \mathsf{F}^{\ast}$, by introducing the duality paring between $(\dot{\rho}, \dot{\pi}, \dot{\rho}_{b})\in V \times V^{\ast} \times \mathsf{F}^{\ast} \simeq T_{(\rho, \pi, \rho_{b})}(T^{\ast}V \times \mathsf{F}^{\ast})$ and $(e_{\rho},e_{\pi},e_{b})\in V ^{\ast}\times V  \times \mathsf{F}\simeq T^{\ast}_{(\rho, \pi, \rho_{b})}(T^{\ast}V \times \mathsf{F}^{\ast})$ given by 
\begin{equation*}
\left\langle(e_{\rho},e_{\pi},e_{b}), (\dot{\rho}, \dot{\pi}, \dot{\rho}_{b}) \right\rangle=\int_{ M}(e_{\rho}\wedge \dot{\rho} + e_{\pi} \wedge \dot{\pi}) +\int_{\partial M}(e_{b}\wedge \dot{\rho}_{b} + e_{b} \wedge \iota_{\partial M}^*\dot{\rho}),
\end{equation*}
the Dirac structure is defined by the graph of
\begin{equation*}
\begin{array}{rccc}
\sharp: & V ^{\ast}\times V  \times \mathsf{F}\simeq T^{\ast}_{(\rho, \pi, \rho_{b})}(T^{\ast}V \times \mathsf{F}^{\ast}) & \to & V \times V^{\ast} \times \mathsf{F}^{\ast} \simeq T_{(\rho, \pi, \rho_{b})}(T^{\ast}V \times \mathsf{F}^{\ast})\\
& (e_{\rho},e_{\pi},e_{b}) & \mapsto & (e_{\pi}, -(-1)^{k(m-k)}e_{\rho}, -\iota^*_{\partial M}\,e_{\pi}).
\end{array}
\end{equation*}

This illustrates how the basic construction of the Dirac structure through duality pairing differs from that of our setting.

A main characteristic of our approach
is that it keeps intact all the fundamental structures of classical mechanics when passing from finite to infinite dimensional systems, including the variational principle, the canonical symplectic form, and the definition of the canonical Dirac structure. This is clearly seen by comparing Theorem \ref{theorem:ForcedLDkforms} and Theorem \ref{Theorem:LDfinitedim}, see also Table \ref{comparison}.
The treatment of boundary ports in our setting is automatically included through the definition of the restricted dual $V^\star$. The present framework also naturally unifies the treatment of systems with energy boundary flow cases on the space of functions, $V=C^{\infty}(M)$, and the space of $k$-forms, $V=\Omega^{k}(M)$, with $M$ of arbitrary dimension.
\end{remark}

\subsection{Example: Electromagnetism and the Poynting theorem}\label{sec:em}

In this example, the theory developed above is applied to electromagnetism on a compact manifold with boundary. Let $(M,g)$ be a compact Riemannian manifold with smooth boundary and $V=\Omega^1(M)$. The Lagrangian density reads
\begin{equation}\label{eq:lagrangiandensityem}
\mathscr L(A,\dot A,{\rm d}A)=\frac{1}{2} \dot A\wedge\star\dot A-\frac{1}{2}{\rm d}A\wedge\star{\rm d}A,\qquad(A,\dot A)\in T\Omega^1(M),
\end{equation}
where $\star:\Omega^k(M)\to\Omega^{m-k}(M)$ denotes the Hodge star operator, which is defined by the condition $g( \alpha , \beta ) \mu_g = \alpha\wedge\star\beta$ for each $\alpha,\beta\in\Omega^k(M)$, with $\mu_g\in\Omega^m(M)$ being the Riemannian volume form. Recall that $E=-\dot A\in\Omega^1(M)$ is the \emph{electric field} and $B={\rm d}A\in\Omega^2(M)$ is the \emph{magnetic field}. Note that we are using the Weyl gauge, although the treatment of other gauges is possible. A straightforward computations yields the following result.

\begin{lemma}\rm
If we write $\zeta={\rm d}A$, the partial derivatives of the Lagrangian density \eqref{eq:lagrangiandensityem} as defined in \eqref{PD_2} read
\[
\label{ParDer_Maxwell_2} \frac{\partial\mathscr L}{\partial A}=0,\qquad \displaystyle\frac{\partial\mathscr L}{\partial\dot A}=\star\dot A,\qquad  \displaystyle\frac{\partial\mathscr L}{\partial\zeta}=-\star\zeta.
\]
\end{lemma}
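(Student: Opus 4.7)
The plan is to verify each of the three partial derivatives directly from the defining conditions \eqref{PD_2}, applied to the Lagrangian density \eqref{eq:lagrangiandensityem}. The only nontrivial ingredient is the symmetry of the pointwise inner product expressed through the Hodge star, namely the identity
\begin{equation*}
\alpha\wedge\star\beta=\beta\wedge\star\alpha,\qquad\alpha,\beta\in\Omega^{k}(M),
\end{equation*}
which follows from $\alpha\wedge\star\beta=g(\alpha,\beta)\,\mu_g$ and the symmetry of $g$ on $k$-forms.

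First, since $\mathscr L(A,\dot A,\zeta)=\frac{1}{2}\dot A\wedge\star\dot A-\frac{1}{2}\zeta\wedge\star\zeta$ has no explicit dependence on $A$, the condition $\delta A\wedge\frac{\partial\mathscr L}{\partial A}=\left.\tfrac{d}{d\epsilon}\right|_{\epsilon=0}\mathscr L(A+\epsilon\,\delta A,\dot A,\zeta)=0$ for every $\delta A\in\bigwedge^{1}T^{*}_{x}M$ forces $\frac{\partial\mathscr L}{\partial A}=0$ by nondegeneracy of the pointwise pairing $\delta A\wedge(\cdot)$ on $\bigwedge^{m-1}T^{*}_{x}M$.

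Next, I compute for the $\dot A$-slot:
\begin{equation*}
\left.\frac{d}{d\epsilon}\right|_{\epsilon=0}\tfrac{1}{2}(\dot A+\epsilon\,\delta\dot A)\wedge\star(\dot A+\epsilon\,\delta\dot A)=\tfrac{1}{2}\bigl(\delta\dot A\wedge\star\dot A+\dot A\wedge\star\delta\dot A\bigr)=\delta\dot A\wedge\star\dot A,
\end{equation*}
using the symmetry identity on $1$-forms. Matching this with $\delta\dot A\wedge\frac{\partial\mathscr L}{\partial\dot A}$ and invoking nondegeneracy yields $\frac{\partial\mathscr L}{\partial\dot A}=\star\dot A$.

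The computation for the $\zeta$-slot is entirely analogous: the minus sign in $-\frac{1}{2}\zeta\wedge\star\zeta$ is carried through, the symmetry identity is applied on $2$-forms, and one gets $\delta\zeta\wedge\frac{\partial\mathscr L}{\partial\zeta}=-\delta\zeta\wedge\star\zeta$, hence $\frac{\partial\mathscr L}{\partial\zeta}=-\star\zeta$. There is no real obstacle here; the only point to be careful about is applying the symmetry $\alpha\wedge\star\beta=\beta\wedge\star\alpha$ correctly for both the $1$-form and $2$-form cases (the identity holds in any degree, so the signs in the lemma are a direct consequence of the signs in \eqref{eq:lagrangiandensityem}).
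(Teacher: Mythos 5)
Your proof is correct and is precisely the ``straightforward computation'' the paper alludes to without writing out: applying the defining conditions \eqref{PD_2}, using the symmetry $\alpha\wedge\star\beta=g(\alpha,\beta)\,\mu_g=\beta\wedge\star\alpha$ to combine the two terms from the product rule, and invoking the nondegeneracy of the wedge pairing $\bigwedge^{k}T_x^*M\times\bigwedge^{m-k}T_x^*M\to\bigwedge^{m}T_x^*M$ to identify the derivatives uniquely. No gaps.
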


Let $\mathcal F :TV\to\Omega^{m-1}(M)$ and $\mathcal F _\partial:TV\to\Omega^{m-2}(\partial M)$ be the body and boundary external forces, respectively. From the previous Lemma and Proposition \ref{proposition:LDkforms}, the forced Lagrange--Dirac equations on $TV \oplus T^\star V$ read
\begin{equation*}
\left\{\begin{array}{ll}
\dot A=\nu, & \vspace{0.2cm}\\
\displaystyle\alpha=\star\nu,\qquad\quad & \displaystyle\dot\alpha=-{\rm d}\star{\rm d}A+\mathcal F ,\vspace{0.2cm}\\
\alpha_\partial=0, & \dot\alpha_\partial=-\iota_{\partial M}^*\left(\star{\rm d}A\right)+\mathcal F _\partial.
\end{array}\right.
\end{equation*}
By eliminating the variables $ \nu , \alpha , \alpha _ \partial $ and using the definition of the electric and magnetic fields, we get the system
\begin{equation}\label{Mawell_eq}
\left\{
\begin{array}{l}
\displaystyle\vspace{0.2cm}\star\dot E-{\rm d} \star B=-\mathcal{F},\\
\displaystyle \iota_{ \partial M} ^*\left(\star B\right)=\mathcal{F} _ \partial.
\end{array}
\right.
\end{equation}
In addition, we define the \emph{charge density} as $\rho=\star{\rm d}\star E\in\Omega^0(M)=C^\infty(M)$. From the first equation of \eqref{Mawell_eq}, we get
\begin{equation}\label{eq:chargeconservation}
\dot\rho+\star{\rm d}\mathcal F=0,
\end{equation}
where we have used that ${\rm d}\circ{\rm d}=0$.
When \eqref{Mawell_eq} and $\rho=\star{\rm d}\star E$ are augmented with the relations $ \dot  B= - {\rm d} E$ and $ {\rm d} B=0$, which follow from $E=- \dot  A$ and $B= {\rm d} A$, an equivalent writing of the Maxwell equations is obtained.

In order to interpret $ \mathcal{F}$ and $ \mathcal{F}_ \partial $, we compute the local energy balance. The energy density for electromagnetism is found as
\[
\mathscr{E}=\dot  A\wedge\frac{\partial \mathscr{L}}{\partial \dot  A} - \mathscr{L}= \dot  A\wedge\star \dot  A -\frac{1}{2}(\dot A\wedge\star\dot A-{\rm d}A\wedge\star{\rm d}A)
= \frac{1}{2}\left(E\wedge\star E +B\wedge\star B\right).
\]
As a result, the local and global energy balance equations computed in Proposition \ref{prop:energykforms} reads
\begin{equation}\label{Energy_Bal_Maxwell}
\frac{\partial \mathscr{E}}{\partial t}+{\rm d} (E\wedge\star B) =-E\wedge\mathcal F\quad\text{and}\quad\frac{d}{dt} \int_M \mathscr{E}=- \int_M E\wedge\mathcal{F} - \int_{ \partial M} \iota^*_{ \partial M} E\wedge\mathcal{F}_ \partial,
\end{equation}
which shows that $\mathcal{F} \in \Omega^{m-1}(M)$ and $\mathcal{F}_\partial \in \Omega^{m-2}(\partial M)$ describe the effects of the current density in the interior and the boundary current density, respectively.

To relate $\mathcal{F}$ and $\mathcal{F}_ \partial $ with the standard vector notations for currents, let $M\subset\mathbb R^3$ be a bounded domain with the Euclidean product, and denote by $\sharp:T^*M\to TM$ the sharp isomorphism defined by the Riemannian metric. Recall that there is an identification
\begin{equation*}
\Omega^1(M) ~\leftrightarrow~\mathfrak X(M),\quad a\mapsto\mathbf a=a^\sharp,
\end{equation*}
which also works for two-forms
\begin{equation*}
\Omega^2(M) ~\leftrightarrow~\mathfrak X(M),\quad b\mapsto\mathbf b=(\star b)^\sharp.
\end{equation*}

\begin{remark}[Riemannian metric on the boundary]\rm
Similar identifications hold on the boundary $\partial M$ by using the induced Riemannian metric, $g_\partial=\iota_{\partial M}^*g$. The sharp isomorphism and the Hodge star operator on $\partial M$ are denoted by $\sharp_\partial:T^*\partial M\to T\partial M$ and $\star_\partial:\Omega^k(\partial M)\to\Omega^{m-k-1}(\partial M)$, respectively.
Similarly, $\mu_g^\partial=\iota_{\partial M}^*(i_n\mu_g)\in\Omega^{m-1}( \partial M)$ is the Riemannian volume form on $\partial M$ (cf. \cite[Corollary 15.34]{Le2012}), where $n\in\mathfrak X(M)|_{\partial M}$ is the outward pointing, unit, normal vector field on $\partial M$, and $i_n:\Omega^k(M)\to\Omega^{k-1}(M)$ denotes the left interior multiplication by $n$. 
\end{remark}

Under these isomorphisms, we regard the electric and the magnetic fields as vector fields, $\mathbf E =E^\sharp$ and $\mathbf B =(\star B)^\sharp$, with analogous expressions for the body and boundary forces: $\mathbf J=(\star\mathcal F)^\sharp$ and $\mathbf j =(\star_\partial\mathcal F_\partial)^{\sharp_\partial}$, where $\mathbf J:T\Omega^1(M)\to \mathfrak X(M)$ is the \emph{current density} in the interior of $M$ and $\mathbf j:T\Omega^1(M)\to\mathfrak X(\partial M)$ is the \emph{surface current density} on the boundary. By using that $\star\star=(-1)^{k(m-k)}$ (here $k=1$), it is easy to check that the interior equation, i.e., the first equation in \eqref{Mawell_eq}, yields the \emph{Ampère law},
\begin{equation*}
\curl\mathbf B=\dot{\mathbf E}+\mathbf J.
\end{equation*}
The boundary condition in \eqref{Mawell_eq} yields the condition of adjacency to a perfect conductor (i.e., the magnetic field vanishes outside $M$),
\begin{equation*}
n\times \mathbf B|_{\partial M}=-\mathbf j,
\end{equation*}
where $n\in \mathfrak X(M)|_{\partial M}$ is the outward pointing, unit, normal vector field on the boundary and $\times$ denotes the cross product. In general, this boundary condition implies that the surface current density determines the jump in the tangential components of the magnetic field, with the external magnetic field being zero in this case.

Similarly, the charge density is expressed as $\rho={\rm div}\mathbf E$ and \eqref{eq:chargeconservation} is nothing but the \emph{charge conservation},
\begin{equation*}
\partial_t\rho+{\rm div}\mathbf J=0.
\end{equation*}

Furthermore, using again the above identification, the energy density $\mathscr E$ for electromagnetism yields the expression
\begin{equation*}
u=\star\mathscr E=\frac{1}{2}\left(|\mathbf E|^2+|\mathbf B|^2\right).
\end{equation*}
As a result, the local and global energy balance \eqref{Energy_Bal_Maxwell} becomes
the well-known \emph{Poynting theorem} and its global version, i.e.,
\begin{equation*}
\frac{\partial u}{\partial t}=-\operatorname{div}\mathbf S-\mathbf J\cdot\mathbf E \qquad \text{and}\qquad \frac{d}{dt} \int_M u \, {\rm d} x = -\int_M \mathbf{J} \cdot \mathbf{E} \,{\rm d} x + \int_ { \partial M} \mathbf{j} \cdot \mathbf{E} \,{\rm d} s,
\end{equation*}
where $\mathbf{S}=\mathbf{E}\times\mathbf{B}=(\star(E\wedge\star B))^\sharp$ is the \emph{Poynting vector}. Suitable extensions of the boundary conditions obtained above can also be derived from our approach, which will be analyzed in future work.

\begin{remark}[Constant external currents]\label{remark:r1}\rm
Following Remark \ref{remark:constantforces}, one can deal with constant external currents by adding an extra term on the Lagrangian. From this viewpoint, we have
\begin{equation*}
L_F(A,\dot A)=\int_M\frac{1}{2}\left(\dot A\wedge\star\dot A-{\rm d}A\wedge\star{\rm d}A\right)+\int_M A\wedge\mathcal J+\int_{\partial M}\iota_{\partial M}^* A\wedge\mathcal J_\partial,
\end{equation*}
where $\mathcal J\in\Omega^{m-1}(M)$ is the \emph{external current} and $\mathcal J_\partial\in\Omega^{m-2}(\partial M)$ is the \emph{external surface current}, which agree with the ones introduced before: $\mathbf J=(\star\mathcal J)^\sharp$ and $\mathbf j=(\star_\partial\mathcal J_\partial)^{\sharp_\partial}$.
\end{remark}

\section{Conclusion}

This paper has developed the foundations of a new geometric framework based on Lagrangian mechanics, variational principles, and infinite-dimensional Dirac structures to describe systems with boundary energy flow. A key feature of the proposed approach is that it satisfies a set of consistency criteria with the geometric framework of finite-dimensional mechanics. This is further illustrated in Table \ref{comparison} which compares finite and infinite dimensional Lagrange--Dirac systems.
A crucial step in the approach is the careful construction of the dual spaces, from which the expressions of the canonical symplectic form and canonical Dirac structures for systems with boundary energy flow were deduced. 
The applications to various examples, such as nonlinear wave equations, the telegraph equation, and the Maxwell equations, demonstrate the applicability and versatility of this approach.
In part II of this paper, systems described by bundle-valued $k$-forms will be considered, with application to gauge and particle field theories. Future work will focus on further developing the interconnection of systems within this framework, exploring applications to fluid dynamics and continuum mechanics, and investigating the role of symmetries.

\section*{Declarations}
The authors have no competing interests to declare that are relevant to the content of this article.

\section*{Acknowledgements}
F.G.-B. is partially supported by a start-up grant from the Nanyang Technological University. H.Y. is partially supported by JST CREST (JPMJCR24Q5), JSPS Grant-in-Aid for Scientific Research (22K03443) and Waseda University Grants for Special Research Projects (2025C-095).

\bibliographystyle{plainnat}
\bibliography{biblio}

\end{document}